\documentclass[12pt,a4paper]{amsart}

\usepackage{mathtools}
\mathtoolsset{showonlyrefs=true}
\usepackage{xcolor} % kolory
\usepackage[hypertexnames=false, colorlinks=true, linkcolor = teal, citecolor = blue]{hyperref}

\usepackage[textwidth=1.8cm]{todonotes}
\usepackage{marginnote}

\usepackage[T1]{fontenc}
\usepackage[utf8x]{inputenc}
\usepackage{microtype}

\usepackage{comment}
\newtheorem{thm}{Theorem}
\newtheorem{prop}[thm]{Proposition}
\newtheorem{lem}[thm]{Lemma}
\newtheorem{cor}[thm]{Corollary}
\newtheorem{assu}[thm]{Assumption}

\theoremstyle{definition}

\newtheorem{ex}[thm]{Example}
\newtheorem{rem}[thm]{Remark}

\numberwithin{equation}{section}
\numberwithin{thm}{section}

\newcommand{\R}{\mathbb{R}}
\newcommand{\D}{\mathcal{D}}

\renewcommand{\d}{\mathrm{d}}

\title[Concentration of mass]
{Concentration of mass of solutions to aggregation--diffusion equations}

\author[G. Karch]{Grzegorz Karch}
\address[G. Karch]{Instytut Matematyczny, Uniwersytet Wroc{\l}awski, pl. Grunwaldzki 2/4, 50-384 Wroc{\l}aw, Poland}
\email{grzegorz.karch@math.uni.wroc.pl}

\author[K. Krawczyk]{Krzysztof Krawczyk}
\address[K. Krawczyk]{Instytut Matematyczny, Uniwersytet Wroc{\l}awski, pl. Grunwaldzki 2/4, 50-384  Wroc{\l}aw, Poland}
\email{krzysztof.krawczyk@math.uni.wroc.pl}

\author[A. Lanar]{Alexandre Lanar}
\address[A. Lanar]{Universit\'e Lyon 1, Centrale Lyon, INSA Lyon, Universit{\'e} Jean Monnet, CNRS, ICJ UMR5208, 69622 Villeurbanne, France}
\email{lanar@math.univ-lyon1.fr}

\date{\today}

\begin{document}
	
\keywords{nonlocal drift-diffusion equation; small diffusivity; concentration of solutions}
	
\subjclass[2010]{35Q92; 35K55; 35B36; 35B45}

\begin{abstract}

\end{abstract}

\begin{abstract}
		We consider the aggregation-diffusion equation
\begin{equation}
    u_t - \varepsilon \Delta u = \nabla \cdot (u \nabla K \ast u)
\end{equation}
in the whole space with a \textit{mildly singular} interaction kernel $K=K(x)$ which behaves like $|x|^k$ near the origin for some $k \in (0,2)$. This equation, supplemented with nonnegative, bounded, and integrable initial data, possesses a global-in-time solution. We prove that the family of nonnegative, radially symmetric solutions of this equation, all sharing the same initial datum, focuses around the origin over a~common finite time interval as $\varepsilon \searrow 0$.
\end{abstract}

%%%%%%%%%%%%%%%%%%%%%%%%%%%%%%%%%%%%%%%%%%%%%%
\maketitle

\section{Introduction}\label{sec:int}

\subsection{Aggregation-diffusion models.}
The non-local nonlinear evolution model
\begin{equation}\label{eq:1}
u_t - \varepsilon \Delta u = \nabla \cdot (u \nabla K \ast u), \quad x \in \mathbb{R}^d, \ t > 0,
\end{equation}
describes the evolution of a particle density $u = u(x, t)$ interacting through pairwise potentials, characterized by convolution with a kernel $K : \mathbb{R}^d \to \mathbb{R}$. Such models are prominent in various disciplines. 
In astrophysics, mean-field models for gravitationally attracting particles are often rooted in the Chandrasekhar equation for stellar equilibrium \cite{Chandrasekhar1942,Chavanis1996}. 
Similarly, in mathematical biology, variations of the Keller-Segel system are utilized to describe population dynamics in phenomena such as chemotaxis, haptotaxis, and angiogenesis \cite{HillenPainter2009}:
\begin{equation}\label{KS}u_t - \Delta u = -\nabla \cdot (u \nabla v), \quad \Delta v + u = 0.\end{equation}
By selecting the fundamental solution of $-\Delta$ as the aggregation kernel $K(x)$, system~\eqref{KS} in $\mathbb{R}^d$ reduces to equation \eqref{eq:1}.

For a comprehensive survey of recent results on equation \eqref{eq:1} (involving linear or nonlinear diffusion),
we refer to \cite{Carrillo2019,Gomez-Castro24} and the references therein. 
In the specific case of linear diffusion, equation \eqref{eq:1} is often referred to as the McKean–Vlasov equation, which is associated with a stochastic differential equation via a mean-field limit procedure \cite{Jabin2017,Bresch2019}.

% In prototypical processes such as the collective behavior of the slime mold {\it Dictyostelium Discoideum}, the signal
% is produced by the cells themselves, and the possibly most striking consequence thereof appears to be the ability
% of cell populations to spontaneously form aggregates in small spatial regions after a finite time.

\subsection{Blowup of solutions.}\label{subsection:blowup}
Model \eqref{eq:1} (and, in particular the Keller-Segel system \eqref{KS}) is mathematically significant primarily due to the potential for its solutions to develop singularities in either finite or infinite time. The mathematical research here focuses on identifying the specific constraints on initial data and system parameters that either guarantee or preclude the occurrence of blow-up phenomena.
Given the vast body of literature concerning blow-up in model \eqref{eq:1}, we recall below only some foundational results.

In the Keller-Segel model \eqref{KS} on the plane, the mass $M = 8\pi$ serves as a critical threshold for the solution's behavior. If the initial condition is a non-negative measure with a finite second moment and total mass $M < 8\pi$, the system admits a unique global-in-time solution. However, for $M > 8\pi$, the solution cannot be extended globally as a regular solution, and a finite-time blow-up occurs \cite{BedrossianMasmoudi2014, BilerEtAl2006, BilerZienkiewicz2015, BlanchetDolbeaultPerthame2006}.
In the critical case $M = 8\pi$, and under specific technical assumptions (such as the finiteness of the second moment), the global solution $u(x, t)$ undergoes infinite-time concentration, \cite{blanchet2008infinite}.

For dimensions $d \geq 3$, the initial value problems for \eqref{eq:1} and \eqref{KS} typically possess local-in-time solutions in various functional spaces. Furthermore, there are global-in-time solutions for small initial data, as well as blow-up solutions for large initial data, for both the Keller-Segel model \eqref{KS} and the aggregation-diffusion equation \eqref{eq:1} with a kernel $K(x)$ that is unbounded at the origin,
see {\it e.g.}~\cite{CPZ04,KarchSuzuki2011, CCE12,LemarieRieusset2013}.

%%%%%%%%%%%%%%%%%%%%%%%%%%%%%

\subsection{Concentration of mass in this work} 
 The goal of this work is to show that concentration phenomena within the general model \eqref{eq:1} are not limited to the blow-up of solutions; they can also be observed in the context of global-in-time regular solutions.
More precisely, we consider the kernel $K=K(x)$ in problem~\eqref{eq:1} that behaves like $|x|^k$  in a neighborhood of zero
for some $k\in (0,2)$ (see Assumptions \ref{ass:gen}-\ref{ass:h} and Example \ref{example}, below). 
Under our assumptions imposed on the kernel $K=K(x)$, the Cauchy problem for equation \eqref{eq:1}, supplemented with nonnegative, bounded, and integrable initial conditions, has a global-in-time solution whose $L^p$-norms are bounded uniformly in time $t>0$
(see Section \ref{sec:git}).
However, 
we prove in Theorem~\ref{thm:main} below that the solutions $u_\varepsilon=u_\varepsilon(t,x)$ to equation \eqref{eq:1} with the same nonnegative, radial, integrable and bounded initial datum satisfy
\begin{equation}\label{main_estimate:0}
\int_0^T \int_{B_{(\nu \varepsilon)^{1/k}}}
u_\varepsilon(t,x)
\, \d x \, \d t\ge C \qquad\text{for all} \quad \varepsilon\in (0, \varepsilon^*).
\end{equation}
for some constants $T>0$, $\nu>0$, $C>0$, $\varepsilon^*>0$, independent of $\varepsilon$.
This result indicates that even if the considered kernel $K$ does not lead to the formation of singularities for the solution $u_\varepsilon$ for any fixed $\varepsilon > 0$, 
a concentration phenomenon can still occur in  the  balls $B_{(\nu \varepsilon)^{1/k}}$ for sufficiently small $\varepsilon > 0$. Moreover, one can immediately deduce from inequality~\eqref{main_estimate:0} (see also Remark \ref{rem:Linfty} below) that for a certain $T>0$
\[
    \sup_{t \in [0,T],\; |x|\le (\nu \varepsilon)^{1/k}} |u_\varepsilon(t,x)| \to +\infty \quad \text{as} \quad \varepsilon \searrow 0.
\]
Our results extend recent studies \cite{BBKL21, BBKL22}, which were restricted to kernels exhibiting $|x|$-like behavior at the origin.

We emphasize that the concentration of solutions described above holds on a common finite interval $[0, T]$ 
and can be interpreted as the {\it intermediate asymptotics} of the solutions as $\varepsilon \searrow 0$. We expect that, under the assumptions of this work, the $L^p$-norms of the solutions decay as $t \to \infty$. Results regarding such decay (specifically, the self-similar large-time behavior) for solutions to the Cauchy problem for equation \eqref{eq:1} with regular kernels have been published in \cite{Carrillo-similification}. The temporal decay of solutions to a one-dimensional version of model \eqref{eq:1} with less regular kernels, under a smallness assumption, was obtained in \cite{KS10}.

%%%%%%%%%%%%%%%%%%
\subsection{Related work on the aggregation equation}
The aggregation equation (the non-local equation without dissipation) given by
\begin{equation}\label{eq:2}
u_t = \nabla \cdot (u \nabla K \ast u), \quad x \in \mathbb{R}^d, \ t > 0,
\end{equation}
has been extensively studied in the fundamental papers by Bertozzi, Laurent  {\it et al.} \cite{BCL09, BGL12, BL07, BLR11}. These works establish that the regularity of the kernel $K$ is essential for the global-in-time existence of solutions. If $K$ is at least a $C^2$ function, then no finite-time blow-up occurs and solutions are global in time. On the other hand, for radial kernels satisfying the Osgood condition:
\begin{equation}\label{Osgood}
K(x) = g(|x|) \quad \text{and} \quad \int_0^1 \frac{1}{g'(r)}\,dr < \infty,
\end{equation}
solutions to the Cauchy problem for equation \eqref{eq:2} may cease to exist in finite time, at least for bounded and compactly supported initial data. In this regime, interactions are sufficiently strong to trigger finite-time blow-up, as discussed in \cite{BCL09, BGL12, BL07, BLR11} and the references therein. Note that for 
$K(x) \sim |x|^k$ in the neighborhood of the origin, assumption \eqref{Osgood} holds true for $k < 2$.

Moreover, such a solution to equation \eqref{eq:2} concentrates into a Dirac delta function at the moment of blow-up (see, e.g., \cite{BGL12, CDFLS11}). This behavior is consistent with our main result expressed by inequality \eqref{main_estimate:0}, stating that for a small diffusion coefficient $\varepsilon > 0$, an $\varepsilon$-uniform portion of the total mass of the solutions to equation \eqref{eq:1} is concentrated within an $\varepsilon$-neighborhood of the origin.
 
 % Regarding the positivity of solutions and the smoothing effect of diffusion in the more regular case ($k=1$), we refer the reader to \cite{LR09, LR10}. In our current setting, the assumptions on the initial data are slightly weaker than those in the work of Karch and Suzuki; specifically, we consider data in weak Lebesgue spaces rather than standard Lebesgue spaces. While some related results appear in the work of Laflèche and Salem \cite{LS21}, their analysis does not cover the full range of parameters addressed here. Furthermore, our approach provides a more direct treatment of moment estimates, in contrast to the $|x|$-weighted moments considered in works such as \cite{BKL09}.

% %%%%%%%%%%%%%%%%%%%%%%%%%%%%

%%%%%%%%%%%%%%%%%%%%%%%%%%%%%
\subsection{Classification of concentration phenomena}
The results of this work allow us to complete the classification of concentration phenomena for solutions to equation \eqref{eq:1}, depending on the behavior of the kernel $K(x)$ at the origin. 
Specifically, consider kernels of the form
$$ K(x) = \frac{|x|^k}{k} \quad \text{in a neighborhood of} \quad x=0$$
where, for $k=0$, we formally set $K(x) = \log|x|$. 
We distinguish the following three cases:

\begin{enumerate}
    \item If $k \in (-d, 0]$, there exist integrable, bounded, and non-negative initial conditions such that the corresponding solutions to the Cauchy problem for equation \eqref{eq:1} blow up in finite time. This blow-up phenomenon was briefly recalled 
    in Subsection~\ref{subsection:blowup}.
    %\retka{Do you mean in the end of Section 1.2.?}

    \item For $k \in (0, 2)$, we observe the concentration of solutions to the Cauchy problem as $\varepsilon \searrow 0$. This phenomenon, which is established and discussed in this paper, is closely related to blow-up results for the aggregation equation  \eqref{eq:2}.

    \item If $k \in [2, \infty)$, or more generally if $K(x)$ is $C^2$ in a neighborhood of the origin, no concentration of solutions is observed. In this case, the $L^p$-norms of solutions to equation \eqref{eq:1}, supplemented with the same initial datum, remain uniformly bounded as $\varepsilon \searrow 0$; see, {\it e.g.}, the estimates in \cite[Sec.~2.1]{BLR11}.
\end{enumerate}
%%%%%%%%%%%%%%%%%%%%%%%%%%%%%

%%%%%%%%%%%%%%%%%%%%%%%%%%%%%%%%%%%
\subsection{Analogous phenomena in other models}
In the study of partial differential equations, the singular perturbation limit occurs as the diffusion coefficient $\varepsilon \searrow 0$. In this regime, solutions often lose global smoothness and concentrate on lower-dimensional sets, such as points (spikes), curves (interfaces), or surfaces (boundary layers).

In this context, the proofs presented in this work rely on a novel methodology developed in the works \cite{Biryuk2001, Boritchev2014, Boritchev2016, Boritchev2018}, devoted to the analysis of equations arising in fluid mechanics, with particular emphasis on the estimation of small-scale quantities—such as structure functions—commonly used in the study of hydrodynamic turbulence.
For example, for the viscous Burgers equation
$$u_t + uu_x = \varepsilon u_{xx},$$
smallness of the diffusion coefficient allows the convective term to steepen the solution profile. In the vanishing-viscosity limit, energy dissipation becomes entirely concentrated on a shock front, leading to a unique entropy solution.
A similar analytical framework has also been applied to the study of the large-scale evolution of the universe, modeled by a multidimensional analogue of the classical Burgers equation \cite{Boritchev2016}.

Another concentration phenomenon observed in a prototypical model for phase separation is given by the Allen--Cahn equation:
\begin{equation}
\partial_t u_\varepsilon = \Delta u_\varepsilon - \frac{1}{\varepsilon^2} W'(u_\varepsilon),
\end{equation}
where 
$W(u) = {4}^{-1}(1-u^2)^2$ is a double-well potential with minima at $\pm 1$. 
The parameter $\varepsilon > 0$ represents the width of the diffuse interface 
between the two phases.
As $\varepsilon \searrow 0$, the solution $u_\varepsilon=u_\varepsilon(t,x)$ converges almost everywhere 
to the pure states $\pm 1$, and the energy concentrates in a thin transition 
layer of thickness $O(\varepsilon)$ separating the phases (see {\it e.g,} \cite{Ilmanen1993, EvansSonerSouganidis1992,chen1992generation}).

The Gierer--Meinhardt reaction--diffusion system
\begin{align}
u_t &= \varepsilon^2 \Delta u - u + \frac{u^p}{v^q}, \\
\tau v_t &= D \Delta v - v + u^r,
\end{align}
posed on a bounded domain with the Neumann boundary conditions, describes the evolution of 
the activator  $u=u(x,t)$ and  the inhibitor $v=v(x,t)$. Here, the parameter $\varepsilon^2 \ll 1$ denotes small activator diffusion.
For $\varepsilon\searrow 0$, stationary
solutions of this system can develop sharply localized, high-amplitude spikes, more precisely, the activator
component concentrates and converges weakly to a sum of Dirac measures,
\[
u_\varepsilon \rightharpoonup \sum_i c_i \,\delta_{x_i}.
\]
The spike positions are typically selected by the
geometry of the domain or by extrema of an associated potential (see \cite{NiTakagi1993GM,Wei1999GM,WardWei1998GM,DelPinoKowalczykWei2005} and the references therein).

The  Schr\"odinger equation in semiclassical scaling has the form
\begin{equation}
i\hbar \,\partial_t \psi^\hbar
= -\frac{\hbar^2}{2m}\,\Delta \psi^\hbar + V\,\psi^\hbar,
\end{equation}
where the solution $\psi^\hbar=\psi^\hbar(t,x)$ is called as the wave function, $V=V(x)$ is a given potential, and
$\hbar$ is the (scaled) Planck constant.
As $\hbar \to 0$, quantum dynamics approach classical mechanics. In this
regime, the probability density $|\psi^\hbar(t,x)|^2$ tends to concentrate
along trajectories determined by the classical Hamiltonian flow associated
with the symbol
\[
H(x,\xi) = \frac{|\xi|^2}{2m} + V(x).
\]
A rigorous framework for describing this concentration is given by
\emph{Wigner measures} (or semiclassical measures), which capture the weak
limits of phase-space energy distributions of $\psi^\hbar$. These measures
propagate along classical trajectories and provide a bridge between the
quantum Schr\"odinger equation and classical transport equations in phase
space (see {\it e.g.} \cite{Gerard1991,LionsPaul1993,MarkowichMauserPoupaud1994,DimassiSjostrand1999} for more details and additional references.

%%%%%%%%%%%%%%%%%%%%%%%%%%%%%%%%%%%%%%%%%%%%%%%%%%%%%%%%%%%%%

\section{Main results} \label{sec:mr}

Our goal is to show the concentration (as $\varepsilon \searrow 0$) of global-in-time solutions to the Cauchy problem
\begin{equation}\label{eq:main}
	\begin{cases}
		u_t - \varepsilon \Delta u = \nabla \cdot (u \nabla K \ast u),
		& \quad x \in \R^d, \ t > 0,\\
		u(0,x) = u_0(x), & \quad x \in \R^d,
	\end{cases}
\end{equation}
in an arbitrary spatial dimension $d\ge 1$ and with a non-negative initial condition
\begin{equation}\label{ini:L1Linf}
u_0 \in L^1(\R^d) \cap L^\infty(\R^d), \quad u_0 \ge 0.
\end{equation}

The kernel $K=K(x)$ in problem~\eqref{eq:main} behaves like $|x|^k$  in a neighborhood of zero
for some $k\in (0,2)$, as formalized in the following set of assumptions. 

\begin{assu}\label{ass:gen}
 There exist constants
    $k\in (0,2)$ and  $K_1>0$ 
    such that 
    \begin{equation}\label{ass:K1}
    |\nabla K(x)|\le 
        K_1|x|^{k-1} 
        \qquad \text{for all} \quad x\in \R^d\setminus \{0\}.
    \end{equation}
   % for all $ x\in \R^d\setminus \{0\}$.
    %
    %\item 
    If $k \in (1,2)$,  we also require that 
    \begin{equation}\label{ass:K2}
    |\nabla K(x)|\le 
        K_2 \qquad \text{for all} \quad |x|\ge 1
    \end{equation}
  %  for all $|x|\ge 1$ and
  and
    \begin{equation}\label{ass:K3}
    |\Delta K(x)|\le 
        K_3|x|^{k-2}
         \qquad \text{for all}\quad x\in\R^d\setminus\{0\}
    \end{equation}
   % for all $\in\R^d\setminus\{0\}$,
    for some constants $K_2>0$ and $K_3>0$. 
\end{assu}

The set of conditions in Assumption \ref{ass:gen} is sufficient to obtain upper estimates of solutions.
In particular,  assumption $\eqref{ass:K2}$ is imposed to simplify the analysis and avoid working with weighted $L^p$-spaces. We require  assumption $\eqref{ass:K3}$ solely for technical simplification in the proof of the $L^p$ estimates in Lemma~$\ref{lem:estim-global-pk12}$ (see below).

The proof of our main result, Theorem~\ref{thm:main} (stated below), requires the kernel $K$ to be indeed non-smooth at the origin. This lack of regularity is expressed by the following assumptions.
% \retka{Say more clearly that the first set is sufficient for UPPER estimates? There also was a confusion between Assumptions (2.1) and 2.1 (unlucky problem with eqref and ref...Assumption 2.1/2.2.2 and Assumption (2.1)/(2.2.) are not the same thing.) Other similar confusions also exist: I corrected some. Let us recheck}

\begin{assu}\label{ass:h}
The kernel $K=K(x)$ takes the form
\begin{equation} \label{ass:K4}
	K(x) = h(|x|^k),
\end{equation}
where the constant $k \in (0,2)$ is the same as in Assumption~\ref{ass:gen},
and the function $h = h(r)$ is of class $C^2$ on
$(0, +\infty)$.
Moreover, we assume that there exist constants $c_1 > 0$ and $r_0>0$ such that
\begin{equation}\label{ass:K5}
	c_1 \le h'(r) \qquad \text{for all} \quad r \in (0,r_0]
\end{equation}
and, furthermore, if $k\in (1,2)$, there exists a constant $c_2 > 0$ such that
\begin{equation}\label{ass:K6}
    \sup_{r\in (0,r_0]}|h''(r)| \le c_2.
\end{equation}
\end{assu}

%Note that, under Assumptions \eqref{ass:K5}-\eqref{ass:K6}, the following limit exists: $\lim_{r\to 0}h'(r)>0$.

\begin{ex}\label{example}
    The following functions are our basic examples of interaction kernels satisfying all these assumptions:
    \begin{enumerate}
        \item The exponential kernel,
        \[
            K(x) = 1 - \exp(-|x|^k) \quad \text{with} \quad h(r)=1-e^{-r} \quad \text{and} \quad k\in (0,2).
        \]
        \item The power-law kernel (for higher-order singularity),
        \[
            K(x) = \frac{|x|^k}{k} \quad \text{with} \quad h(r)=\frac{r^k}{k} \quad \text{and} \quad k\in (0,1].
        \]
    \end{enumerate}
    We note that for $k \in (1,2)$, one can still consider the kernel $K(x)=|x|^k/k$ with a smooth truncation for large $|x|$, consistent with Assumptions~\eqref{ass:K2} and~\eqref{ass:K3}.
\end{ex}

Interaction kernels satisfying Assumption~\ref{ass:gen}
%\retka{Is the second set really needed for well-posedness?} 
for some $k\in (0,2)$ are  \textit{mildly singular}, in the sense that the Cauchy problem~\eqref{eq:main} admits a unique global-in-time solution for every nonnegative initial datum $u_0 \in L^1(\R^d) \cap L^\infty(\R^d)$; see Proposition~\ref{thm:K2-local}.
This solution conserves mass,
\begin{equation}\label{mass:0}
	M\equiv \int_{\R^d} u(t,x) \, \d x = \int_{\R^d} u_0(x) \, \d x  \quad \text{for all} \ t \ge 0,
\end{equation}
and preserves the nonnegativity and radial symmetry of the initial datum.
Moreover, nonnegative solutions are not only global-in-time but also uniformly bounded in time %(though not in $\varepsilon$) 
in suitable Lebesgue $L^p$-spaces; see Section~\ref{sec:git} for further properties of solutions to problem~\eqref{eq:main}.

\begin{rem} \label{rem:def-varphi}
Throughout this work,
along with the kernel $K = K(x)$ corresponding to the constant $k \in (0,2)$,
we use the function
$\varphi \in C^2\big([0,+\infty)\big)$
satisfying
\begin{equation}\label{eq:phi-prop}
	\varphi(s)=
	\begin{cases}
		s & \text{for } 0 \le s \le \left( \frac{1}{2} \right) ^{2-k},\\
		1 & \text{for } s \ge  \left( \frac{3}{2} \right) ^{2-k},
	\end{cases}
\end{equation}
with the properties
\begin{equation}\label{eq:phi-prop2}
	0 \le \varphi(s) \le \min(s,1), \quad 0 \le \varphi'(s) \le 1 \quad \text{and} \quad \varphi''(s) \le 0.
\end{equation}
In particular, we have
\begin{equation}\label{eq:phi-prop3}
\varphi(|x|^{2-k}) = |x|^{2-k} \quad \text{for}\quad  |x| \le \frac{1}{2}\quad \text{and}\quad  \varphi(|x|^{2-k}) = 1\quad \text{for}
\quad  |x| \ge \frac{3}{2}.
\end{equation}
As an example of such a~function, we may consider 
$\varphi = \psi \ast \rho(\cdot)$, where  $\psi(s) = \min(s,1)$ and $\rho$ is a suitable compactly supported mollifier.
\end{rem}

Our goal is to study the behavior of solutions to problem~\eqref{eq:main} as $\varepsilon \searrow 0$. The main result of this work, stated in the following theorem, is that the family of non-negative, radially symmetric solutions $\{u_\varepsilon\}_{\varepsilon>0}$ to problem~\eqref{eq:main}, all sharing the same initial datum, concentrates around the origin over a common finite time interval $[0,T]$.

\begin{thm}[Concentration phenomena]\label{thm:main}
Let the kernel $K$ satisfy Assumptions \ref{ass:gen} and \ref{ass:h}. % and  let $\varepsilon>0$.
Assume the initial condition $u_0 \in L^1(\R^d) \cap L^\infty(\R^d)$ to be  nonnegative and radial,
and denote by $u_\varepsilon = u_\varepsilon(t,x)$ the corresponding global-in-time nonnegative and radial solution to problem~\eqref{eq:main} with $\varepsilon>0$.
There exist numbers
    $$
    \mu\in (0,1), 
	 \quad T > 0,\quad  \nu>0,\quad  C>0, \quad \varepsilon^*>0,
    $$
depending only on
the dimension $d$,
the kernel $K$, %via Assumptions \ref{ass:gen}-\ref{ass:h},
the mass $M=\int_{\R^d} u(x,t)\, \d x$, and
the cut-off function $\varphi$ 
{\rm (}given in Remark~\ref{rem:def-varphi}{\rm )},
such that if
    \begin{equation}\label{eq:u0-concentration:intro}
        \int_{\R^d}
        \varphi ( |x|^{2-k} ) u_0(x)\,\d x
        < \mu 
        \int_{\R^d} u_0(x)\,\d x
    \end{equation}
    then 
\begin{equation}\label{main_estimate}
\int_0^T \int_{B_{(\nu \varepsilon)^{1/k}}}
u_\varepsilon(t,x)
\, \d x \, \d t\ge C \qquad\text{for all} \quad \varepsilon\in (0, \varepsilon^*).
\end{equation}
\end{thm}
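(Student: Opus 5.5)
The plan is to track the evolution of the localized moment
\[
I_\varepsilon(t)=\int_{\R^d}\varphi(|x|^{2-k})\,u_\varepsilon(t,x)\,\d x,
\]
in the spirit of the virial-type arguments used for blowup in the aggregation equation \eqref{eq:2}. We will show that, as long as $I_\varepsilon$ stays small, it decreases at a rate bounded below by a positive constant. The only thing that can slow this decrease is the mass inside the tiny ball $B_{(\nu\varepsilon)^{1/k}}$. Since $I_\varepsilon\ge0$, the decrease cannot continue forever, so a definite amount of space–time mass must sit in that ball.

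\textbf{Differentiating the moment.} Write $\Phi(x)=\varphi(|x|^{2-k})$. Multiplying \eqref{eq:main} by $\Phi$, integrating, and symmetrizing the drift term gives
\[
\frac{\d}{\d t}I_\varepsilon=\varepsilon\int\Delta\Phi\, u\,\d x-\frac12\iint\big(\nabla\Phi(x)-\nabla\Phi(y)\big)\cdot\nabla K(x-y)\,u(x)u(y)\,\d x\,\d y .
\]
\emph{Diffusion term.} Near the origin $\Delta\Phi=(2-k)(d-k)|x|^{-k}$, and $\Delta\Phi$ is bounded away from $0$. Split at $|x|=(\nu\varepsilon)^{1/k}$. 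Outside the ball, $\varepsilon|x|^{-k}\le 1/\nu$, so this part contributes at most $C_d M/\nu$. Inside the ball we use the crude bound $\varepsilon|x|^{-k}\le$ something that is not integrable against $u$ uniformly. This must be avoided, so we instead keep the inner part as $\varepsilon\int_{B}|x|^{-k}u$, or smooth $\Phi$ at scale $(\nu\varepsilon)^{1/k}$. The cleaner choice is to replace $\Phi$ by $\Phi_\varepsilon(x)=\varphi\big((|x|^2+(\nu\varepsilon)^{2/k})^{(2-k)/2}\big)$ up to an additive constant. Then $\varepsilon\Delta\Phi_\varepsilon\lesssim \varepsilon(\nu\varepsilon)^{-1}=1/\nu$ on the ball and $\lesssim 1/\nu$ outside. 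This gives $\varepsilon\int\Delta\Phi_\varepsilon u\le C_1\nu^{-1}\int_{B_{(\nu\varepsilon)^{1/k}}}u+C_1\nu^{-1}M$. The $M/\nu$ term is harmless once $\nu$ is chosen large. Since it then has a small coefficient, I expect it can be absorbed; the bookkeeping is to be checked.

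\textbf{Interaction term, the main obstacle.} For radial $u$ and $x,y$ both in $B_{1/2}$, use $\nabla K(z)=k\,h'(|z|^k)|z|^{k-2}z$ and $\nabla\Phi(x)=(2-k)|x|^{-k}x$. The radial structure (after averaging over angles) gives a positive lower bound
\[
\big(\nabla\Phi(x)-\nabla\Phi(y)\big)\cdot\nabla K(x-y)\ \ge\ c\,c_1>0 .
\]
This is the analogue of the Osgood-type coercivity in \cite{BCL09,BGL12}. For $k\in(1,2)$ we control the $h''$ corrections by \eqref{ass:K6}. For pairs with $x$ or $y$ outside $B_{1/2}$, we bound the integrand by $C\,(1-\Phi)$-type weights using \eqref{ass:K1}–\eqref{ass:K2}. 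Because $1-\Phi\ge c$ there, the mass outside $B_{1/2}$ is at most $C I_\varepsilon$. Altogether this yields
\[
\frac{\d}{\d t}I_\varepsilon\le -c_0M^2+C_2 M I_\varepsilon+C_1\nu^{-1}M+C_1\nu^{-1}\int_{B_{(\nu\varepsilon)^{1/k}}}u\,\d x .
\]
The precise version of this coercivity, especially the pairs where one point lies in the smoothing ball, is where I expect the real work. The radial averaging identity has to be proved carefully.

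\textbf{Conclusion.} Fix $\nu$ so large that $C_1\nu^{-1}M\le c_0M^2/4$, and choose $\mu$ so small that $C_2M\cdot 2\mu M\le c_0M^2/4$. Then, while $I_\varepsilon\le 2\mu M$, we have $I_\varepsilon'\le -\tfrac{c_0}{2}M^2+C_1\nu^{-1}\int_B u$. Assumption \eqref{eq:u0-concentration:intro} gives $I_\varepsilon(0)<\mu M$; the $\varepsilon$-smoothing error in $\Phi_\varepsilon$ is $O(\varepsilon^{(2-k)/k})$, and it is absorbed for $\varepsilon<\varepsilon^*$. Let $T=4\mu/(c_0M)$. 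If $\int_0^T\!\int_B u<C:=c_0M^2T\nu/(4C_1)$, then $I_\varepsilon$ stays below $2\mu M$ on $[0,T]$. Integrating, we get $I_\varepsilon(T)<\mu M-\tfrac{c_0}{4}M^2T\le 0$, which contradicts $I_\varepsilon\ge0$. This proves \eqref{main_estimate}.
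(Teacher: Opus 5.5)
There is a genuine gap. The differential inequality you arrive at is false, and the error is in the coercivity step, not in the bookkeeping. After smoothing, your diffusion estimate is actually right: $\varepsilon\Delta\Phi_\varepsilon\le C/\nu$ pointwise for small $\varepsilon$, so $\varepsilon\int\Delta\Phi_\varepsilon\,u\,\d x\le CM/\nu$ in total. The term $C_1\nu^{-1}\int_B u$ is then no better than $C_1M/\nu$ and says nothing about the ball. The problem is the lower bound $\ge c\,c_1$ for the symmetrized interaction integrand on $B_{1/2}\times B_{1/2}$, which fails for $\Phi_\varepsilon$. Set $a=(\nu\varepsilon)^{1/k}$. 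The map $\nabla\Phi_\varepsilon$ is $Ca^{-k}$-Lipschitz and $|\nabla K(z)|\le K_1|z|^{k-1}$. Hence for $|x|,|y|\le\delta a$ the integrand is bounded by $Ca^{-k}\big(|x-y|^k+|x+y|^k\big)\le C'\delta^k$, so coercivity is lost on $B_a\times B_a$.

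Your inequality in fact refutes itself. Use $\int_B u\le M$ and choose $\nu$ large and $\mu$ small: then $I_\varepsilon$ must become negative by time $T$ wherever the mass sits, which is impossible for a nonnegative global solution. Your own constant shows the same thing. Once $\nu\ge 4C_1/(c_0M)$, you get $C=c_0M^2T\nu/(4C_1)\ge MT\ge\int_0^T\!\int_B u\,\d x\,\d t$, so the lower bound you conclude cannot hold.

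The missing idea is where the ball mass actually enters. With a smoothed weight it can only come from the degeneration of the interaction term on $B_{Ra}\times B_{Ra}$, with a coefficient of order $M$, not $\nu^{-1}$.

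The paper does not smooth. For $\varphi(|x|^{2-k})$ itself, the symmetrized kernel $\Psi$ is bounded below on $B_{r_1}\times B_{r_1}$ (Lemmas~\ref{lem:1A} and~\ref{lem:Psi:lowerbound}). For $k\in(1,2)$ this uses the substitution $z=x+y$, $w=x-y$, which relies on exact homogeneity. The kernel is bounded elsewhere, and together these give $\varepsilon\int_0^T\D(u_\varepsilon)e^{-\omega Mt}\,\d t\ge L>0$ (Lemma~\ref{lem:conc}).

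The price is exactly the term you set aside, $\varepsilon\int_0^T\!\int_B|x|^{-k}u$, which cannot be bounded by $\nu^{-1}\int_0^T\!\int_B u$. The paper controls it with H\"older's inequality and the $\varepsilon$-dependent bounds $\|u_\varepsilon\|_p\le CM^{1+\eta/k}\varepsilon^{-\eta/k}$. This gives $C\varepsilon^{-1}\big(\int_0^T\!\int_B u\big)^\gamma$ (Lemma~\ref{lem:Destim1}). For $d=1$, $k\in[1,2)$, where $\partial_x^2|x|^{2-k}$ is singular at the origin, it uses bounds on $\|\partial_x u_\varepsilon\|_p$ instead (Lemma~\ref{lem:Destim2}).

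If you want to keep your smoothing, you must prove three things about the smoothed symmetrized kernel:
\begin{itemize}
\item it is bounded on $B_{Ra}\times B_{Ra}$, which follows easily from the Lipschitz bound above;
\item it is bounded below uniformly in $a$ on $(B_{r_1}\times B_{r_1})\setminus(B_{Ra}\times B_{Ra})$;
\item it is bounded uniformly in $a$ elsewhere.
\end{itemize}
That would give $I_\varepsilon'\le -c_0M^2+C_2MI_\varepsilon+C_1M/\nu+C_3M\int_{B_{Ra}}u$, which closes without any $L^p$ estimates and without a separate one-dimensional case. For $k\le1$ the uniform lower bound follows from monotonicity of $\nabla\Phi_\varepsilon$. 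For $k\in(1,2)$, however, the map $x\mapsto(|x|^2+a^2)^{-k/2}x$ is not monotone and the homogeneity trick is unavailable, so that lower bound is a new lemma you would have to supply.
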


Applying the H\"older inequality, we obtain the $\varepsilon$-dependent rates of concentration of the $L^p$-norms of solutions.

\begin{cor}\label{cor:lp-lower} 
Under the assumptions of Theorem~\ref{thm:main} and using the same notation,  
    for every $p\in [1,\infty)$ there exist numbers $T>0$, $\nu>0$, $C>0$, $\varepsilon^*>0$ such that 
    the solutions $u_\varepsilon$ to problem~\eqref{eq:main} satisfy
	\begin{equation}\label{eq:lp-lower-p}
		\int_0^{T}
		\left(
		\int_{B_{(\nu \varepsilon)^{1/k}}}
		u_\varepsilon (t,x)^p
		\, \d x
		\right)^\frac{1}{p}
		\, \d t
		\ge
		C%(p)
		\varepsilon^{-\frac{d(p-1)}{kp}}
	\end{equation}
	and
	\begin{equation}\label{eq:lp-lower-infty}
		\int_0^{T}
		\sup_{x \in B_{(\nu \varepsilon)^{1/k}}}
		u_\varepsilon (t,x)
		\, \d t
		\ge
		C%(p)
		\varepsilon^{-\frac{d}{k}}
	\end{equation}
    for all $\varepsilon\in (0,\varepsilon^*)$.
\end{cor}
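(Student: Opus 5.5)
The corollary follows from the main estimate~\eqref{main_estimate} of Theorem~\ref{thm:main} by H\"older's inequality on the small balls. Fix the constants $T,\nu,C,\varepsilon^*$ given by Theorem~\ref{thm:main}; they will be kept, and only the constant $C$ will be changed. Set $R_\varepsilon=(\nu\varepsilon)^{1/k}$. Then
\[
|B_{R_\varepsilon}| = \omega_d (\nu\varepsilon)^{d/k},
\]
where $\omega_d$ is the volume of the unit ball in $\R^d$.

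For $p\in(1,\infty)$ and each fixed $t$, H\"older's inequality on $B_{R_\varepsilon}$ gives
\[
\int_{B_{R_\varepsilon}} u_\varepsilon(t,x)\,\d x \le |B_{R_\varepsilon}|^{\frac{p-1}{p}}\Bigl(\int_{B_{R_\varepsilon}} u_\varepsilon(t,x)^p\,\d x\Bigr)^{\frac1p}.
\]
Here $u_\varepsilon\ge 0$, so the absolute value is not needed. The factor $|B_{R_\varepsilon}|^{(p-1)/p}$ does not depend on $t$. We integrate in $t$ over $[0,T]$ and use~\eqref{main_estimate}, which gives
\[
C \le \omega_d^{\frac{p-1}{p}}(\nu\varepsilon)^{\frac{d(p-1)}{kp}} \int_0^T\Bigl(\int_{B_{R_\varepsilon}} u_\varepsilon^p\,\d x\Bigr)^{\frac1p}\d t .
\]
This is~\eqref{eq:lp-lower-p} with the new constant $C\,\omega_d^{-(p-1)/p}\nu^{-d(p-1)/(kp)}$. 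When $p=1$ the statement is exactly Theorem~\ref{thm:main}, so nothing more is needed in that case.

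For~\eqref{eq:lp-lower-infty} we use the trivial bound $\int_{B_{R_\varepsilon}}u_\varepsilon(t,x)\,\d x\le |B_{R_\varepsilon}|\sup_{B_{R_\varepsilon}}u_\varepsilon(t,\cdot)$. Integrating in time and applying~\eqref{main_estimate} again gives the lower bound $C\omega_d^{-1}\nu^{-d/k}\varepsilon^{-d/k}$.

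Measurability of $t\mapsto\sup_{B_{R_\varepsilon}} u_\varepsilon(t,\cdot)$ is guaranteed because the solutions constructed in Section~\ref{sec:git} are continuous for $t>0$. Even without this, the inequality holds if the time integral is read as an upper integral. There is no real obstacle in this argument. The only point to check is that the constants $T,\nu,\varepsilon^*$ are inherited unchanged from Theorem~\ref{thm:main}, and that the new $C$ depends only on $p$, $d$, $k$, $\nu$ and the old $C$.
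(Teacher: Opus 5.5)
Your proposal is correct and follows essentially the same route as the paper. The paper also applies H\"older's inequality on $B_{(\nu\varepsilon)^{1/k}}$, whose volume is $\frac{\pi^{d/2}}{\Gamma(d/2+1)}(\nu\varepsilon)^{d/k}$, integrates in time, and invokes \eqref{main_estimate}, handling \eqref{eq:lp-lower-infty} as ``similar.'' Your remarks on the measurability of $t\mapsto\sup_{B_{(\nu\varepsilon)^{1/k}}}u_\varepsilon(t,\cdot)$ and on inheriting $T,\nu,\varepsilon^*$ from Theorem~\ref{thm:main} are accurate details that the paper leaves implicit.
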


\begin{rem}\label{rem:Linfty}
Theorem~\ref{thm:main}, together with Corollary~\ref{cor:lp-lower}, indicates that even if the interactions described by the kernel $K$ do not lead to the formation of singularities for the solution $u_\varepsilon$ for any fixed $\varepsilon > 0$ (neither in finite nor in infinite time, cf. Proposition~\ref{thm:K2-local} and the estimates of the $L^p$-norms of solutions in Lemmas \ref{lem:estim-global-p} and \ref{lem:estim-global-pk12}), 
a concentration phenomenon can still occur in $\varepsilon$-small balls for sufficiently small $\varepsilon > 0$. Indeed, from inequality~\eqref{eq:lp-lower-infty}, one can immediately deduce that
\[
    \sup_{t \in [0,T],\; |x|\le (\nu \varepsilon)^{1/k}} |u_\varepsilon(t,x)| \geq \varepsilon^{-\frac{d}{k}}C/T \quad\text{for all}\quad \varepsilon\in (0, \varepsilon^*).
\]
\end{rem}

\begin{rem}\label{rem:mu} %Theorem~\ref{thm:main} is proved in the final section of this work. 
We observe that for any non-negative initial condition $u_0 \in L^1(\mathbb{R}^d)$, the following estimate holds:
\begin{equation}
    \int_{\mathbb{R}^d} \varphi(|x|^{2-k}) u_0(x) \, \d x \le \int_{\mathbb{R}^d} u_0(x) \, \d x,
\end{equation}
which follows directly from property \eqref{eq:phi-prop} of the function $\varphi$. Consequently, assumption \eqref{eq:u0-concentration:intro} is satisfied, provided that $u_0$ is sufficiently concentrated around the origin.
\end{rem}

\begin{rem}
Theorem~\ref{thm:main} can also be used to describe the local-in-time concentration of the solution to problem~\eqref{eq:main} with fixed $\varepsilon=1$. Indeed, the proof of Theorem~\ref{thm:main} is based on the differential inequality~\eqref{eq:diff-ineq}, which for $\varepsilon=1$ takes the form 
\begin{equation}\label{eq:diff-ineq:proof}
    \frac{\d}{\d t} I(u)(t) \le \mathcal{D}(u)(t) + \omega M_u
    \big(
        - \mu M_u + I(u)(t)
    \big) \quad \text{for all } t \ge 0,
\end{equation}
involving the mass $M_u=\int_{\mathbb{R}^d} u(t,x) \, \d x$, constants $\mu \in (0,1)$, $\omega > 0$, and the quantities
\[
I(u)(t) = \int_{\mathbb{R}^d} \varphi(|x|^{2-k}) u(t, x) \, \d x,
\quad
\mathcal{D}(u)(t) = \int_{\mathbb{R}^d} \varphi(|x|^{2-k}) \Delta u(t, x) \, \d x.
\]

In particular, by choosing an initial datum of the form 
\[
u_{0,\ell}(x) = \ell v_0(x) \quad \text{with } \ell > 0,
\]
where $v_0$ is a fixed, non-negative, smooth, and compactly supported function such that 
\begin{equation}\label{eq:v0-concentration:cor}
    I(v_0) = \int_{\mathbb{R}^d} \varphi(|x|^{2-k}) v_0(x) \, \d x < \mu \int_{\mathbb{R}^d} v_0(x) \, \d x,
\end{equation}
we obtain that the corresponding solution $u_\ell = u_\ell(t,x)$ satisfies, at time $t = 0$,
\begin{equation}
\begin{split}
\left. \frac{\d}{\d t} I(u_\ell)(t) \right|_{t=0}
& \le
\mathcal{D}(u_\ell)(0) + \omega M_{u_\ell} \big(
    - \mu M_{u_\ell} + I(u_\ell)(0)
\big) \\
&= \ell \mathcal{D}(v_0) + \ell^2 \omega M_{v_0}
\big(
    - \mu M_{v_0} + I(v_0)
\big) \\
& < 0
\end{split}
\end{equation}
for sufficiently large $\ell$. Thus, by a continuity argument, there exists $T_0 > 0$ such that the solution $u_\ell(t,x)$ corresponding to the initial datum $u_{0,\ell}$ with large $\ell$ satisfies
\begin{equation}
    \frac{\d }{\d t} \int_{\mathbb{R}^d} \varphi(|x|^{2-k}) u_\ell (t, x) \, \d x < 0 \quad \text{for all } t \in [0, T_0].
\end{equation}
Since the mass of the solution is constant in time (see \eqref{mass:0}), the strict decrease of the truncated moment
$I(u_\ell)$
% $\int_{\mathbb{R}^d} \varphi(|x|^{2-k}) u_\ell(t, x) \, \d x$
over the time interval $[0, T_0]$ implies that the mass must concentrate around the origin in finite time.
\end{rem}

%%%%%%%%%%%%%%%%%%%%%%%%%%%%%%%%%%%%%%%%%%%%%%%%%%%%%%%%%%%%%

\subsection*{Notation.}
For $p\in [1,\infty]$, the norms of the Lebesgue space $L^p(\mathbb{R}^d)$  are denoted by $\|\cdot\|_p$. 
The usual Sobolev spaces are denoted by $W^{k,p}(\R^d).$
We denote by $B_r$ the ball in $\mathbb{R}^d$ centered at $x=0$ with radius $r>0$, and by $\sigma_d={2\pi^{d/2}}/{\Gamma\left(d/2\right)}$  the area of the unit sphere ${\mathbb S}^{d-1}$ in $\R^d$.
Throughout the paper, the letter  $C$ is used for various positive numbers which may vary from line to line. Its dependence upon additional parameters will be indicated explicitly.

%%%%%%%%%%%%%%%%%%%%%%%%%%%%%%%%%%%%%%%%%%%%%%%%%%%%%%%%%%%%%

\section{Well-posedness result and upper estimates}\label{sec:git}

\subsection{Existence of solutions.}
We begin by stating a standard result on well-posedness for problem~\eqref{eq:main}.

\begin{prop}\label{thm:K2-local}
	Let $d \ge 1$, $\varepsilon >0$ and $k\in (0,2)$.
    Assume that the kernel $K$ satisfies
    % assumption~\eqref{ass:K1}, and in addition, assumption~\eqref{ass:K2} if $k \in (1,2)$.
    Assumption~\ref{ass:gen}.
	For every $u_0 \in L^1(\R^d) \cap L^\infty(\R^d)$ such that $u_0\geq 0$,
	there exists
	a~unique global-in-time solution to problem~\eqref{eq:main} such that 
	 $u \in C \big( [0,\infty),L^1(\R^d)\cap L^p(\R^d)  \big)$ for each $p\in (1,\infty)$.

	This solution has the following properties.
	\begin{enumerate}
		\item  It is non-negative: $u(t,x)\geq 0$ for all $t\ge 0$ and $x\in\R^d$.
		\item It is radially symmetric for radial initial data.
		\item It exhibits the mass conservation property:
		\begin{equation}\label{mass}
		M\equiv \int_{\R^d} u(t,x) \, \d x = \int_{\R^d} u_0(x) \, \d x %\quad\text{for all}\quad t\ge 0.
		\end{equation}
        for all $t\ge 0$.
		\item It has the classical parabolic regularity:
			\begin{equation} \label{regularity}
			u \in C \big( (0,T], W^{2,p}(\R^d) \big)
			\cap C^1 \big( (0,T], L^p(\R^d) \big)
			\end{equation}
			for every $T>0$ and every $p \in (1,\infty)$. 
	\end{enumerate}
\end{prop}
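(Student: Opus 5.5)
The proof will follow the standard route for nonlocal parabolic problems: local existence by a fixed point on Duhamel's formula, then a priori bounds that rule out blow-up. The nonlinear term is $\nabla\cdot(u\,\nabla K\ast u)$, and everything hinges on the bound $\|\nabla K\ast u\|_\infty\le C(\|u\|_1+\|u\|_p)$ for some $p$ large enough.

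\textbf{Step 1: bound on the drift.} I split $\nabla K=\nabla K\,\mathbf 1_{B_1}+\nabla K\,\mathbf 1_{B_1^c}$.
\begin{itemize}
\item By \eqref{ass:K1}, the local part lies in $L^{q}$ for every $q<d/(1-k)$ when $k<1$, and is bounded when $k\ge1$.
\item The far part is bounded. For $k\le 1$ this follows from \eqref{ass:K1}, since $|x|^{k-1}\le1$ for $|x|\ge1$. For $k\in(1,2)$ it is exactly \eqref{ass:K2}.
\end{itemize}
Young's inequality then gives $\|\nabla K\ast u\|_\infty\le C(\|u\|_1+\|u\|_{p})$ with $p>d/(d-1+k)$ (any $p$ works when $k\ge1$).

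\textbf{Step 2: local existence and regularity.} I write
\[
u(t)=e^{\varepsilon t\Delta}u_0+\int_0^t\nabla\cdot e^{\varepsilon(t-s)\Delta}\big(u\nabla K\ast u\big)(s)\,\d s .
\]
Using $\|\nabla e^{\varepsilon t\Delta}f\|_r\le C(\varepsilon t)^{-1/2}\|f\|_r$, a contraction argument in $C([0,T],L^1\cap L^\infty)$ gives a unique local solution. Its existence time depends only on $\|u_0\|_1+\|u_0\|_\infty$.

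The remaining qualitative properties are standard:
\begin{itemize}
\item Parabolic regularity \eqref{regularity} follows by bootstrapping, since the drift is bounded and, via $\nabla K\ast\nabla u$, Hölder continuous.
\item Nonnegativity follows from the maximum principle for the linear equation $v_t-\varepsilon\Delta v=\nabla\cdot(v\,b)$ with the drift $b$ frozen.
\item Mass conservation follows by integrating the equation, since the right-hand side is in divergence form.
\item Radial symmetry follows from uniqueness and the rotation invariance of the equation. This uses that $K$ is radial, which is guaranteed by Assumption \ref{ass:h}. Alternatively, one can observe that if $u$ is rotation invariant, so is the problem with the rotated kernel.
\end{itemize}

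\textbf{Step 3: global existence.} This is the main obstacle. I need an a priori bound on $\|u(t)\|_p$ for a $p$ as in Step 1, which then also yields an $L^\infty$ bound. Multiplying by $u^{p-1}$ and integrating by parts gives
\[
\frac1p\frac{\d}{\d t}\|u\|_p^p+\varepsilon\frac{4(p-1)}{p^2}\|\nabla u^{p/2}\|_2^2=-\frac{p-1}{p}\int u^p\,\Delta K\ast u\,\d x .
\]
\begin{itemize}
\item For $k\in(1,2)$, I use \eqref{ass:K3}: $|\Delta K|\le K_3|x|^{k-2}$, which is locally integrable. Together with \eqref{ass:K2} this gives $\|\Delta K\ast u\|_\infty\le C(\|u\|_1+\|u\|_\infty)$. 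I would instead aim for a Nash/Gagliardo--Nirenberg argument: bound $\int u^p|\Delta K\ast u|$ by $C\|u\|_{p+\theta}^{p}\|u\|_{r}$ with $r$ close to $1$, and absorb it into the dissipation using mass conservation. This works because $|x|^{k-2}$ is less singular than the critical Newtonian $|x|^{-d}$, so the nonlinearity is subcritical.
\item For $k\le1$, I avoid the Laplacian of $K$. I instead keep the term in the form $\int u^{p}\,\nabla\cdot(\nabla K\ast u)$ rewritten as $-\int\nabla u^p\cdot\nabla K\ast u$, use Young's inequality together with the drift bound from Step 1, and absorb the result into the dissipation.
\end{itemize}
In both cases the resulting differential inequality has sublinear growth in $\|u\|_p^p$, which gives a global (even uniform-in-time) bound. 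Finally, an iteration in $p$ (Moser type) upgrades this to an $L^\infty$ bound. Global existence then follows by repeatedly applying the local result of Step 2.
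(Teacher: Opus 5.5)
\textbf{Gap in Step~3.} Your overall architecture matches the paper's: a contraction on the Duhamel formula, standard qualitative properties, and continuation from an a priori $L^p$ bound. Your near/far splitting in Step~1 is a valid substitute for Lemma~\ref{lem:herm2} in the local theory. The gap is in Step~3, which you rightly call the crux but do not close.

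In the drift form, Young's inequality leaves the forcing $C\varepsilon^{-1}\|\nabla K\ast u\|_\infty^2\|u\|_p^p$. With your Step~1 bound this is of order $(M+\|u\|_p)^2\|u\|_p^p$, which is superlinear in $y=\|u\|_p^p$, not sublinear. If you discard the remaining dissipation you get $y'\le Cy^{1+2/p}$, which gives no global bound.

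To close the estimate you must keep the dissipation and bound it from below using mass conservation and Gagliardo--Nirenberg:
$\|\nabla u^{p/2}\|_2\ge CM^{-1/\eta}\|u\|_p^{p/2+1/\eta}$ with $\eta=d(p-1)/p$ (cf.~\eqref{eq:ineqp3}). Then you compare powers of $\|u\|_p$.
\begin{itemize}
\item With your drift bound, which is linear in $\|u\|_p$, the comparison requires $2/\eta>2$, i.e.\ $p<d/(d-1)$ when $d\ge2$. This is compatible with the Step~1 constraint $p>d/(d-1+k)$ only because $k>0$. You never identify this window.
\item The paper avoids the window by using the interpolated bound $\|\nabla K\ast u\|_\infty\le CM^{1-(1-k)/\eta}\|u\|_p^{(1-k)/\eta}$ (Lemma~\ref{lem:herm2} with $a=1-k$). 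For every $p$ as in \eqref{p:ass} this yields \eqref{eq:ineqpn1}: the forcing carries the power $(1-k)/\eta$ against $1/\eta$ from the dissipation, and $k>0$ decides.
\item The same comparison, $(2-k)/\eta<2/\eta$, is what drives \eqref{eq:ineqpn2-df}.
\end{itemize}

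\textbf{Subcriticality criterion.} Your stated reason for subcriticality is wrong. The borderline is $k=0$, i.e.\ $|\Delta K|\sim|x|^{-2}$, not $|x|^{-d}$. For $d\ge3$ and $k\in(2-d,0)$, the singularity $|\Delta K|\sim|x|^{k-2}$ is weaker than $|x|^{-d}$. Yet such kernels admit finite-time blow-up (cf.\ the classification in the introduction).

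\textbf{A simplification for $k\in[1,2)$.} You do not need $\Delta K$ at all for this proposition. Assumptions \eqref{ass:K1}--\eqref{ass:K2} give $\nabla K\in L^\infty$, hence $\|\nabla K\ast u(t)\|_\infty\le\|\nabla K\|_\infty M$ for all $t$. Your drift-form computation then gives $\frac{\d}{\d t}\|u\|_p^p\le\frac{p(p-1)}{4\varepsilon}\|\nabla K\|_\infty^2M^2\|u\|_p^p$. This is at most exponential growth, which suffices for continuation. The paper uses \eqref{ass:K3} only to obtain the sharp $\varepsilon^{-\eta/k}$ dependence in Lemma~\ref{lem:estim-global-pk12}.

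\textbf{Smaller points.}
\begin{itemize}
\item The heat semigroup is not strongly continuous on $L^\infty$. Run the contraction in $C([0,T],L^1\cap L^p)$ with $p$ large but finite, as in the paper; this also makes your Moser iteration unnecessary.
\item The right-hand side of your energy identity should read $+\frac{p-1}{p}\int u^p\,\Delta K\ast u\,\d x$. This is harmless once you take absolute values.
\item Your remark that radial symmetry needs $K$ radial (Assumption~\ref{ass:h}) is a fair catch. However, your ``alternatively'' sentence does not replace that hypothesis.
\end{itemize}
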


The proof of Proposition \ref{thm:K2-local} is standard; analogous reasoning has been presented in other works (e.g., \cite{Carrillo-similification,LR09, LR10}).
Consequently, we will only outline the main steps of the proof.

First we state a Hardy–Littlewood–Sobolev type estimate of convolution with a~singular kernel, which is essential for both the construction of solutions to problem~\eqref{eq:main} and for the derivation of their $L^p$-estimates.

\begin{lem}\label{lem:herm2}
	Let $d \ge 1$ and $a \in [0,d)$.
    For every $p\in  \big((1-a/d)^{-1}, \infty\big]$, there exists a~constant $C_{p,a} > 0$ such that
	$$
    \left\|
    |\cdot|^{-a} \ast v
    \right\|_\infty
	\le
	C_{p,a}
	\|v\|_1^{1 - \frac{a}{\eta}}
	\|v\|_p^{\frac{a}{\eta}}
    \quad\text{with} \quad \eta= \frac{d(p-1)}{p},
	$$
	for all $v \in L^1(\R^d) \cap L^p(\R^d)$, where $\eta=d$ if $p=\infty$.
\end{lem}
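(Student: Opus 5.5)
We need to prove the Hardy–Littlewood–Sobolev type estimate in Lemma~\ref{lem:herm2}: $\||\cdot|^{-a}\ast v\|_\infty \le C\|v\|_1^{1-a/\eta}\|v\|_p^{a/\eta}$. The plan is the standard splitting of the kernel into a near part and a far part, followed by optimisation over the splitting radius. The case $a=0$ is trivial, with constant $1$, since $|\cdot|^0\ast v=\int v$. So assume $a\in(0,d)$.

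\emph{Splitting.} Fix $x\in\R^d$ and a radius $R>0$. We write
\begin{equation}
\int_{\R^d}\frac{|v(y)|}{|x-y|^{a}}\,\d y=\int_{|x-y|<R}\frac{|v(y)|}{|x-y|^{a}}\,\d y+\int_{|x-y|\ge R}\frac{|v(y)|}{|x-y|^{a}}\,\d y .
\end{equation}
\emph{Far part.} Since $|x-y|^{-a}\le R^{-a}$ on the region of integration, this term is bounded by $R^{-a}\|v\|_1$.

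\emph{Near part.} We apply H\"older's inequality with exponents $p$ and $p'=p/(p-1)$, which gives the bound $\|v\|_p\,\big\||\cdot|^{-a}\mathbf 1_{B_R}\big\|_{p'}$. Computing the kernel norm in polar coordinates,
\begin{equation}
\big\||\cdot|^{-a}\mathbf 1_{B_R}\big\|_{p'}^{p'}=\sigma_d\int_0^R r^{d-1-ap'}\,\d r .
\end{equation}
This is finite exactly when $ap'<d$, i.e. $a<d(p-1)/p=\eta$, which is equivalent to $p>(1-a/d)^{-1}$, the stated range. In that case the norm equals $C\,R^{(d-ap')/p'}=C\,R^{\eta-a}$, because $d/p'=\eta$. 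For $p=\infty$ we have $p'=1$, the bound is $C\|v\|_\infty R^{d-a}$, and this is consistent with $\eta=d$.

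\emph{Optimisation.} Combining the two parts yields
\begin{equation}
\big(|\cdot|^{-a}\ast|v|\big)(x)\le C\big(R^{\eta-a}\|v\|_p+R^{-a}\|v\|_1\big)\quad\text{for all } x\in\R^d,\ R>0.
\end{equation}
If $v=0$ there is nothing to prove. Otherwise we choose $R=(\|v\|_1/\|v\|_p)^{1/\eta}$, which balances the two terms. Both terms then become $\|v\|_1^{1-a/\eta}\|v\|_p^{a/\eta}$, so the supremum over $x$ gives the claim with a constant depending only on $d,a,p$.

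There is no real obstacle in this argument. The only points requiring care are the integrability condition $ap'<d$, which is exactly the stated range of $p$, and the exponent bookkeeping $d/p'=\eta$ together with the $p=\infty$ endpoint.
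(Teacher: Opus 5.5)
Your proof is correct. You split the kernel at radius $R$, apply H\"older on the near part (finite exactly when $ap'<d$, i.e.\ $p>(1-a/d)^{-1}$, with $d/p'=\eta$), bound the far part by $R^{-a}\|v\|_1$, and choose $R=(\|v\|_1/\|v\|_p)^{1/\eta}$; this covers the endpoints $a=0$ and $p=\infty$ too. The paper gives no proof of its own, only a pointer to \cite[Lemma 4.5.4]{H15}, and your argument is the standard one for estimates of this type.
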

Lemma~\ref{lem:herm2} is a counterpart of~\cite[Lemma 4.5.4]{H15} and can be proved in the same way.

\begin{proof}[Sketch of the proof of Proposition~\ref{thm:K2-local}]
A local-in-time solution to problem~\eqref{eq:main} is constructed by applying the Banach fixed-point theorem to the following integral equation:
\begin{equation}\label{eq:K2}
	u(t)
	=
	e^{t \varepsilon \Delta }u_{0}
	+
	\int_{0}^{t}
	\nabla e^{(t-s) \varepsilon \Delta }
	\cdot \left( u(s) \nabla K \ast u(s) \right)
	\, \d s,
\end{equation}
where the heat semigroup $e^{t \varepsilon \Delta}$ is defined by the convolution with the Gauss--Weierstrass kernel.
Here, one should use this approach in the Banach space
$C \big( [0,T],L^1(\R^d) \cap L^p(\R^d) \big)$
with a sufficiently large $p \in (1,\infty]$.

Moreover, to derive suitable estimates for the nonlinear term on the right-hand side of equation~\eqref{eq:K2} (i.e., to estimate the convolution), we proceed as follows. For $k \in (0,1)$, assumption~\eqref{ass:K1} is satisfied, allowing us to use Lemma~\ref{lem:herm2}. On the other hand, for $k \in [1,2)$, we have $\nabla K \in L^\infty(\R^d)$ by assumptions~\eqref{ass:K1} and~\eqref{ass:K2}, and the corresponding convolution is estimated using Young's inequality.

The proof that the solution is radial and non-negative is standard. Similarly, the conservation of mass is proved by integrating both sides of equation \eqref{eq:K2} with respect to $x \in \R^d$ (see \cite{Carrillo-similification, LR09, LR10}).
The parabolic regularity~\eqref{regularity} of the local-in-time solution is also classical, as well as instantaneous smoothing in terms of Sobolev norms for $t>0$, using the methods of \cite{LR09,LR10}.

% \\
% KILL
% This follows directly from the fact that $u\in C\big([0,T], L^1(\R^d)\cap L^p(\R^d)\big)$ for all $p>1$, which implies that $\nabla K \ast u\in C\big([0,T], L^p(\R^d)\big)$ for all sufficiently large $p>1$. 
% KILL
% \\
The nonnegative solutions constructed above exist for all $t\ge 0$. This is a direct consequence of a standard continuation argument combined with the $L^p$-estimates derived in the next subsection.
\end{proof}

%%%%%%%%%%%%%%%%%%%%%%%%%%%%%%%%%%%%%%%%%%%%%%%%%%%%%%%%%%%%%%%%%%%%%%%%

\subsection{\texorpdfstring{$L^p$}--estimates.}\label{subsec:Lp}

In this subsection, we derive the upper, $\varepsilon$-dependent, $L^p$-estimates of solutions to problem~\eqref{eq:main} which are obtained independently for the cases $k \in (0,1]$ and $k \in (1,2)$. In this way, we extend the result from the works \cite{BBKL21,BBKL22}, where the case $k = 1$ was considered, to cover all exponents $k \in (0,2)$.

\begin{rem} \label{rem:scale}
The following two properties of solutions to problem~\eqref{eq:main} are essential for the proof of the $L^p$-decay estimates.
\begin{enumerate} 
\item The solutions to problem~\eqref{eq:main} obtained in Proposition~\ref{thm:K2-local} possess the regularity stated in \eqref{regularity}. This property allows us to perform integration by parts and time differentiations of integrals involving $u_\varepsilon$ in the subsequent calculations.
\item
	If a function $u$ solves problem~\eqref{eq:main} with $\varepsilon = 1$ and initial condition $u_0$, then the function $u_\varepsilon(t,x) = \varepsilon u(\varepsilon t,x)$ is a solution to problem~\eqref{eq:main} with an arbitrary $\varepsilon > 0$ and initial condition $\varepsilon u_0$.
	Consequently, in the proofs of the following lemmas, we first set $\varepsilon = 1$ for clarity, and subsequently rescale the result accordingly.
\end{enumerate}
\end{rem}

\begin{rem}[Mass dependence via scaling]
Here, we consider  the equation 
\begin{equation}\label{eq:homogeneous}
u_t = \Delta u + \nabla \cdot (u(\nabla K * u)) \quad \text{with a kernel satisfying}\quad  K(\lambda x) = \lambda^k K(x).
\end{equation} 
Suppose now that $\tilde{u}(t,x)$ is a solution of this equation with unit mass, $\int_{\mathbb{R}^d} \tilde{u}(t,x) \, \d x = 1$, and such  that its $L^p$-norm is uniformly bounded: $\|\tilde{u}(t, \cdot)\|_p \le C_1(p)$ for all $t > 0$.
Due to the scaling property of equation \eqref{eq:homogeneous}, 
the rescaled function
$$\tilde{u}_\lambda(t,x) = \lambda^{d+k} \tilde{u}(\lambda^2 t, \lambda x)$$ is also its solution for each $\lambda>0$. 
 The mass of the scaled solution is exactly $M = \lambda^k$. Using the scaling parameter $\lambda = M^{\frac{1}{k}}$, we obtain 
 that the 
  new solution $u(t,x) \equiv \tilde{u}_\lambda(t,x)$ with arbitrary mass $M$ satisfies 
$$ \|u(t, \cdot)\|_p \le M^{1+\frac{\eta}{k}} C_1(p), $$
with  $\eta = {d(p-1)}/{p}$ and 
with a constant $C_1(p)$ independent of the mass $M$. 
Analogously, if the gradient of the unit-mass solution is bounded: $\|\nabla \tilde{u}(t, \cdot)\|_p \le C_2(p)$ for all $t > 0$, then by applying the same scaling, we obtain the estimate for the gradient of the solution with mass $M$:
$$ \|\nabla u(t, \cdot)\|_p \le M^{1+\frac{1 + \eta}{k}} C_2(p). $$
This remark explains the powers of the mass $M$ appearing below, which are obtained even for non-homogeneous kernels. 
Meanwhile, the powers of $\varepsilon$ result from another scaling introduced in Remark \ref{rem:scale}.
\end{rem}
%\retka{GN included here. I stupidly copied it from BBKL2. Citation in the commentary below, I cannot access the bib file}

In the following, we systematically use the following classical estimates of the Sobolev norms.

\begin{lem}[Gagliardo-Nirenberg-Sobolev inequality  {\cite{BM19}}]  \label{GN} 
For every  $v\in C_c^{\infty}(\R^d)$, the following inequality holds 
$$
\|v\|_{\dot{W}^{\beta,r}} \leq C \|v\|^{\theta}_{\dot{W}^{m,p}} \|v\|^{1-\theta}_{q},
$$
where $m>\beta\geq 0$, and $r$ is defined by
$$
\frac{d}{r}=\beta-\theta \Big( m-\frac{d}{p} \Big)+(1-\theta)\frac{d}{q},
$$
under the assumption that $\beta/m \leq \theta < 1$, except  when $\beta=0$, $r=q=\infty$ and $m-d/p$ is a nonnegative integer. Here, the constant $C$ depends on $m,p,q,\beta,d$.
\end{lem}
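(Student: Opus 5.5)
The plan is to prove the inequality by a Littlewood--Paley splitting argument, optimized with respect to the cut-off frequency. The exponent relation defining $r$ is exactly the one forced by the scaling $v\mapsto v(\lambda\,\cdot)$, so both sides must be balanced at each frequency. Write $v=\sum_{j\in\mathbb Z}\Delta_j v$ with the standard homogeneous dyadic blocks. Assume first $1<r<\infty$ and $\beta/m<\theta<1$. The homogeneous Besov embedding $\dot B^{\beta}_{r,1}\subset \dot W^{\beta,r}$ then reduces the claim to bounding $\sum_j 2^{j\beta}\|\Delta_j v\|_r$.

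For a threshold $N\in\mathbb Z$, split the sum into $j\le N$ and $j>N$. For $j\le N$, apply Bernstein's inequality in the form $\|\Delta_j v\|_r\le C2^{jd(1/q-1/r)}\|v\|_q$, which requires $q\le r$. For $j>N$, apply $\|\Delta_j v\|_r\le C2^{-jm}2^{jd(1/p-1/r)}\|v\|_{\dot W^{m,p}}$, which requires $p\le r$. The relation defining $r$ together with $\theta>\beta/m$ makes the two resulting geometric series converge, since the low-frequency exponent is positive and the high-frequency exponent is negative. This gives
\[
\|v\|_{\dot W^{\beta,r}}\le C\big(2^{Na}\|v\|_q+2^{-Nb}\|v\|_{\dot W^{m,p}}\big),\qquad a,b>0,
\]
with $a/(a+b)=\theta$. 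Choosing $2^{N}\approx(\|v\|_{\dot W^{m,p}}/\|v\|_q)^{1/(a+b)}$ yields the product form $\|v\|^\theta_{\dot W^{m,p}}\|v\|^{1-\theta}_q$.

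Some configurations do not fit this scheme directly.
\begin{enumerate}
\item When $r<\max(p,q)$, Bernstein runs the wrong way. In that case I would first pass to an intermediate space: embed $\dot W^{m,p}$ into $\dot B^{m-d/p+d/s}_{s,\infty}$ and $L^q$ into $\dot B^{-d/q+d/s}_{s,\infty}$ for a common $s\ge r$. I would then use the standard interpolation of Besov spaces with different integrability, namely the refined Gagliardo--Nirenberg argument, which controls $\dot W^{\beta,r}$ by the product of two $\dot B_{\infty,\infty}$-type norms.
\item The endpoint $\theta=\beta/m$ cannot be reached by summing geometric series, because the low-frequency exponent degenerates. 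Here I would argue by complex interpolation between $L^q$ and $\dot W^{m,p}$, that is $[L^q,\dot W^{m,p}]_\theta=\dot H^{\theta m}_{\tilde r}$, combined with the Sobolev embedding from $\dot H^{\theta m}_{\tilde r}$ into $\dot W^{\beta,r}$ where appropriate.
\item The cases $r=1$ and $r=\infty$, where Besov and Sobolev norms differ, I would treat by approximation from $r>1$ or from integer $\beta$.
\end{enumerate}

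The main obstacle is this case analysis, above all the endpoint and the excluded situation $\beta=0$, $r=q=\infty$, $m-d/p\in\mathbb N$. In the excluded case the inequality actually fails, through logarithmic divergence of the sum. Carrying out every configuration is precisely the content of Brezis--Mironescu \cite{BM19}. In the paper itself I would only verify the instances actually used: integer $\beta\in\{0,1\}$ and $p,q,r\in[1,\infty)$. For those, the Littlewood--Paley argument above, or the classical Nirenberg proof, suffices, and I would cite \cite{BM19} for the remaining cases.
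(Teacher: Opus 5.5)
The paper does not prove this lemma; it only quotes \cite{BM19}. Your fallback to that reference is therefore the paper's route, and your Littlewood--Paley part is extra. Within that part, however, the case analysis misplaces the difficulties. Inserting the relation for $r$ into your exponents gives $a=\beta+\frac{d}{q}-\frac{d}{r}=\theta\kappa$ and $b=m-\beta-\frac{d}{p}+\frac{d}{r}=(1-\theta)\kappa$, with $\kappa=m-\frac{d}{p}+\frac{d}{q}$. Since $p,q\le r$ forces $a\ge\beta$, your series converge exactly when $\beta>0$ or $q<r$. Your claim that the relation together with $\theta>\beta/m$ guarantees convergence is therefore false for $\beta=0$, $q=r$. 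There $\kappa=0$ for every $\theta$ (i.e.\ $q=dp/(d-mp)$), both sums become $\sum_j 1$, and the lemma reduces to the Sobolev embedding $\dot W^{m,p}\subset L^{dp/(d-mp)}$. That embedding is true, but it does not follow from blockwise Bernstein bounds; it needs Hardy--Littlewood--Sobolev or a Hedberg-type argument. This is the same divergence you invoke for the excluded case ($\beta=0$, $r=q=\infty$, $m=d/p$), so a divergent dyadic sum is not evidence that the inequality fails.

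Conversely, the endpoint does not degenerate. If $\beta>0$ and $\theta=\beta/m$, then $\frac{1}{r}=\frac{\theta}{p}+\frac{1-\theta}{q}$ with $0<\theta<1$. Hence $p,q\le r$ forces $p=q=r$, and then $a=\beta>0$, $b=m-\beta>0$, so your main step already covers it. Since $\theta m\ge\beta$ always gives $r\ge\min(p,q)$, the configurations genuinely outside your scheme are exactly $\min(p,q)\le r<\max(p,q)$.

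For those configurations, plan (1) cannot work. Once both factors are embedded into spaces built on $L^s$ with $s\ge\max(p,q)>r$, no inequality leads back to $\dot W^{\beta,r}$. Take $v_n=\sum_{i=1}^n\phi(\cdot-x_i)$ with $\phi=\partial_1^N\chi$, $\chi\in C_c^\infty(\R^d)$, $N$ large and the $x_i$ far apart. The vanishing moments of $\phi$ suppress the low-frequency interaction, so any fixed homogeneous Besov norm built on $L^s$ is at most $C n^{1/s}$ on $v_n$, while $\|v_n\|_{\dot W^{\beta,r}}\ge c\, n^{1/r}$. In particular, two $\dot B_{\infty,\infty}$-type norms never control an $L^r$-based norm with $r<\infty$; a refined Gagliardo--Nirenberg inequality must keep one factor at integrability at most $r$.

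What is missing is Nirenberg's integration-by-parts (duality) mechanism. Its prototype is $\|v'\|_{L^2(\R)}^2=-\int v v''\le\|v\|_1\|v''\|_\infty$. Blockwise estimates give only $\|\Delta_j v'\|_2^2\le C\|v\|_1\|v''\|_\infty$ uniformly in $j$, which does not sum. Likewise, the complex interpolation in (2) is unavailable when $p$ or $q$ equals $1$ or $\infty$.

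Finally, your list of the instances used in the paper is inaccurate. Lemma~\ref{lem:estim-global-p-d1-k12} uses $\|u\|_\infty\le C\|u\|_1^{1/2}\|u_x\|_\infty^{1/2}$, $\|w\|_\infty\le C\|w\|_2^{1/2}\|w_x\|_2^{1/2}$ and $\|u\|_\infty\le C\|u\|_1^{(p-1)/(2p-1)}\|u_x\|_p^{p/(2p-1)}$, all with $r=\infty$ and one with $p=\infty$. These, like Nash's inequality in Lemmas~\ref{lem:estim-global-p} and~\ref{lem:estim-global-pk12}, have $\beta=0$, $p\le r$ and $q<r$. Your main step proves them directly via $\|v\|_\infty\le\sum_j\|\Delta_j v\|_\infty$, with no approximation in $r$. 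So for what the paper actually uses, your argument is a valid self-contained proof, which the paper does not supply. As a proof of the lemma as stated, it has the gaps above.
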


\begin{lem}\label{lem:estim-global-p} 
Let $d \ge 1$, $\varepsilon>0$
and $k \in (0,1]$,
and assume that the kernel $K$ satisfies assumption~\eqref{ass:K1}.
Fix a non-negative initial condition $u_0 \in L^1(\R^d) \cap L^\infty(\R^d)$.
Let  $u_\varepsilon = u_\varepsilon(t,x)$ be a non-negative solution to problem~\eqref{eq:main},
defined for all $t \ge 0$. % with  $T_{max}\in (0,\infty]$.
For every $p \in (1,\infty)$, there exists a number $C > 0$
\textup{(}independent of the solution $u_\varepsilon$ and parameter $\varepsilon$\textup{)}
such that 
\begin{equation}\label{eq:estim-p}
		\| u_\varepsilon(t) \|_p \le \max
        \left\{
        M,
		\| u_0 \|_{\max{\{\bar{p},p\}}}, 
		 C M^{1 + \eta/k} \varepsilon^{-\eta/k}
		\right\},
\end{equation}
for all $t \ge 0$. In inequality \eqref{eq:estim-p}, we denote
\begin{equation}\label{eq:def-pk}
    \eta = \frac{d(p-1)}{p}
\end{equation}
and $\bar{p} \in [2,\infty)$ is a fixed exponent satisfying $\bar{p}>1/(1-(1-k)/d).$ 
\end{lem}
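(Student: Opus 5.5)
We first reduce to $\varepsilon=1$ using Remark~\ref{rem:scale}. If $u$ solves \eqref{eq:main} with $\varepsilon=1$ and datum $\varepsilon^{-1}u_0$, then $u_\varepsilon(t,x)=\varepsilon u(\varepsilon t,x)$ solves it with datum $u_0$. Since $\|u_\varepsilon(t)\|_p=\varepsilon\|u(\varepsilon t)\|_p$ and the mass of $u$ is $M/\varepsilon$, a bound of the form
\[
\|u(t)\|_p\le \max\{M_u,\ \|u(0)\|_{\max\{\bar p,p\}},\ C M_u^{1+\eta/k}\}
\]
transforms into \eqref{eq:estim-p}. Indeed, $\varepsilon (M/\varepsilon)^{1+\eta/k}=M^{1+\eta/k}\varepsilon^{-\eta/k}$. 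For the other two terms I must be careful: $\varepsilon M/\varepsilon=M$ and $\varepsilon\|\varepsilon^{-1}u_0\|=\|u_0\|$, so they are fine.

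With $\varepsilon=1$, multiply the equation by $u^{p-1}$ and integrate by parts (justified by \eqref{regularity}). This gives
\[
\frac1p\frac{\d}{\d t}\|u\|_p^p+\frac{4(p-1)}{p^2}\|\nabla u^{p/2}\|_2^2=-(p-1)\int u^{p-1}\nabla u\cdot\nabla K\ast u\,\d x .
\]
Write the right-hand side as $-\frac{p-1}{p}\int \nabla u^p\cdot \nabla K\ast u$. Then bound it by Cauchy--Schwarz,
\[
\frac{2(p-1)}{p}\int u^{p/2}|\nabla u^{p/2}|\,|\nabla K\ast u| \le \frac{2(p-1)}{p}\|\nabla K\ast u\|_\infty \|u\|_p^{p/2}\|\nabla u^{p/2}\|_2 .
\]
This avoids needing $\Delta K$, which is why no \eqref{ass:K3} is required for $k\le1$. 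By \eqref{ass:K1} and Lemma~\ref{lem:herm2} with $a=1-k\in[0,1)$, we get
\[
\|\nabla K\ast u\|_\infty\le K_1C\,M^{1-a/\eta_q}\|u\|_q^{a/\eta_q}
\]
for $q>(1-a/d)^{-1}$. For $k=1$ simply $\|\nabla K\ast u\|_\infty\le K_1M$. Young's inequality then absorbs half of the dissipation, leaving
\[
\frac{\d}{\d t}\|u\|_p^p+c\|\nabla u^{p/2}\|_2^2\le C\|\nabla K\ast u\|_\infty^2\|u\|_p^p .
\]

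Next I close the estimate in one of two ways. For $p\ge\bar p$, take $q=p$, which is admissible since $\bar p>(1-a/d)^{-1}$. Apply the Nash/Gagliardo--Nirenberg inequality (Lemma~\ref{GN}) to $w=u^{p/2}$, interpolating $\|w\|_2$ between $\|\nabla w\|_2$ and $\|w\|_{2/p}=M^{p/2}$. This yields $\|\nabla w\|_2^2\ge c\,\|u\|_p^{p(1+\alpha)}M^{-p\alpha'}$ with explicit exponents fixed by scaling. The resulting differential inequality has the form
\[
y'\le y\big(CM^{\cdot}y^{2a/(p\eta)}-cM^{\cdot}y^{\alpha}\big),\qquad y=\|u\|_p^p .
\]
Whenever $\|u\|_p$ exceeds a threshold, the dissipation dominates; dimensional analysis (Remark on mass scaling) shows this threshold is $CM^{1+\eta/k}$. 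Hence $\|u(t)\|_p\le\max\{\|u_0\|_p,CM^{1+\eta/k}\}$ follows by a standard ODE comparison. For $p<\bar p$, interpolate between $L^1$ and $L^{\bar p}$, using $\|u\|_p\le M^{1-\theta}\|u\|_{\bar p}^{\theta}\le\max\{M,\|u\|_{\bar p}\}$. The $\bar p$-bound involves $\|u_0\|_{\bar p}$, and the power of $M$ is again consistent by scaling. This explains the $\max\{\bar p,p\}$ and the $M$ entries.

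The main obstacle is verifying that the exponent of $y$ in the dissipation term strictly exceeds that of the nonlinear term, i.e.\ $\alpha>2a/(p\eta)$. This is exactly where $k>0$ (subcriticality, $a<1$) enters. A secondary difficulty is tracking the mass exponents to land precisely on $M^{1+\eta/k}$; I would handle this by checking homogeneity under $u\mapsto\lambda^{d+k}u(\lambda^2t,\lambda x)$ rather than computing directly.
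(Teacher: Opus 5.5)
Your proposal is correct and follows essentially the paper's route: rescale to $\varepsilon=1$, test the equation with $u^{p-1}$, bound $\nabla K\ast u$ in $L^\infty$ via Lemma~\ref{lem:herm2} with $a=1-k$, compare with the dissipation through the Gagliardo--Nirenberg/Nash bound $\|\nabla u^{p/2}\|_2\ge CM^{-1/\eta}\|u\|_p^{p/2+1/\eta}$ to get the threshold $CM^{1+\eta/k}$, and interpolate for $p<\bar p$. The paper factors out $\|\nabla u^{p/2}\|_2$ instead of absorbing half the dissipation by Young's inequality, which makes no difference.

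One correction is needed in the interpolation step. Do not pass through $M^{1-\theta}\|u\|_{\bar p}^{\theta}\le\max\{M,\|u\|_{\bar p}\}$: that would leave $CM^{1+\bar\eta/k}\varepsilon^{-\bar\eta/k}$, with the wrong $\varepsilon$-power. Instead, insert the $\bar p$-bound into the product $M^{1-\theta}\|u\|_{\bar p}^{\theta}$ and use $\theta\bar\eta=\eta$, as the paper does, to land exactly on $CM^{1+\eta/k}$.
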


\begin{proof}
In this proof, we use ideas from the paper~\cite{BBKL21} (case $k=1$); however, the argument naturally extends to $k \in (0,1)$.

First, we consider 
\begin{equation}\label{p:ass}
p \in [2,\infty) \quad\text{such that} \quad p>\frac{1}{1-\frac{1-k}{d}}.
\end{equation}
The inequality $p > 1/(1 - (1 - k)/d)$ in~\eqref{p:ass} plays a role only if $d = 1$ and $k \in (0, 1/2]$; however, this observation does not affect the subsequent reasoning.

As mentioned in Remark~\ref{rem:scale}, we begin by considering problem~\eqref{eq:main} with $\varepsilon = 1$, and we denote $u=u_1$.
Multiplying the first equation in~\eqref{eq:main} by $(p-1)^{-1} u^{p-1}$ and integrating by parts, we obtain
\begin{equation}\label{eq:ineqT2}
	\begin{split}
		\frac{1}{p(p-1)}  \frac{\d}{\d t}  \| u \|_p^p  = -&
		%\varepsilon
		\int_{\R^d} |\nabla u|^2 u^{p-2} \, \d x - \int_{\R^d} u^{p-1} \nabla u \cdot (\nabla K \ast u) \, \d x \\
		= - &
		%\varepsilon 
		\frac{4}{p^2} \|\nabla u^{p/2} \|_2^2
		- \frac{2}{p} \int_{\R^d} u^{p/2} \nabla u^{p/2} \cdot
        (\nabla K \ast u  
         ) \, \d x.
        \end{split}
\end{equation}
Let us recall that the kernel $K$ satisfies assumption~\eqref{ass:K1}. Thus, by  the H{\"o}lder inequality combined with Lemma~\ref{lem:herm2} for $a=1-k$ (using \eqref{p:ass}), we get:
\begin{equation}\label{eq:ineqT1}
- \frac{2}{p} \int_{\R^d} u^{p/2} \nabla u^{p/2} \cdot \big( \nabla K \ast u \big)\, \d x
\le
C
    M^{1 - (1-k)/\eta}
    \|u\|_p^{p/2 + (1-k)/\eta}
  \|\nabla u^{p/2} \|_2,
\end{equation}
where $\eta$ is defined in formula~\eqref{eq:def-pk}. 
Applying this inequality to 
equation ~\eqref{eq:ineqT2}, we obtain
\begin{equation}\label{eq:ineqT5}
    \begin{split}
		\frac{1}{p(p-1)}  \frac{\d}{\d t}  \| u \|_p^p 
        \le &
		\frac{4}{p^2} \|\nabla u^{p/2} \|_2
        \Big(
        - \|\nabla u^{p/2} \|_2
        + C
        M^{1 - (1-k)/\eta}
        \|u\|_p^{p/2 + (1-k)/\eta}
        \Big).
    \end{split}
\end{equation}

Next, using the Gagliardo-Nirenberg-Sobolev inequality and then the H{\"o}lder inequality, we deduce
\begin{align*}
		\| u\|_p^{p/2}
         = \| u^{p/2} \|_2 & \le C \| \nabla u^{p/2} \|_2^{d/(d+2)} \| u^{p/2} \|_1^{2/(d+2)} \\ & = C \| \nabla u^{p/2} \|_2^{d/(d+2)} \| u \|_{p/2}^{p/(d+2)} \\
         & 
        \le C \| \nabla u^{p/2} \|_2^{d/(d+2)} \|u\|_p^{\beta p / (d+2)} M^{(1 - \beta) p / (d+2)},
\end{align*}
with $\beta=(p-2)/(p-1)$
Therefore,
\begin{equation*}
    \| \nabla u^{p/2} \|_2^{d/(d+2)}
    \ge
    C \|u\|_p^{p (d+2-2 \beta) / 2(d+2)} M^{-(1 - \beta) p / (d+2)},
\end{equation*}
which implies that
\begin{equation}\label{eq:ineqp3}
    \| \nabla u^{p/2} \|_2
    \ge
    C M^{-1/\eta}
    \|u\|_p^{p/2 + 1/\eta}.
\end{equation}
Combining estimate~\eqref{eq:ineqp3} with inequality~\eqref{eq:ineqT5}, we obtain the differential inequality
\begin{equation}\label{eq:ineqpn1}
		\frac{1}{p(p-1)}  \frac{\d}{\d t}  \| u \|_p^p 
        \le
		C_1
        M^{1 - (1-k)/\eta}
        \| \nabla u^{p/2}\|_2
        \|u\|_p^{p/2 + (1-k)/\eta}
        \left(
        1 - C_2 M^{- 1 - k/\eta } \|u\|_p^{k/\eta}
        \right),
\end{equation}
where $C_1, C_2 > 0$ are constants depending only on $p, d, k,$ and $K$.
By applying the reasoning from the proof of~\cite[Lemma~4.1]{BBKL21} to this differential inequality, we deduce that
\begin{equation}\label{eq:ineqp7}
	\| u(t) \|_p \le \max
    \left\{ 
	\| u_0 \|_p, C M^{1+\eta/k}
    \right\}
	%\quad \text{for all} \quad t > 0
\end{equation}
for all $t \ge 0$,
which holds true for all exponents $p$ satisfying condition~\eqref{p:ass}
with a constant $C=C(p,d,k,K) > 0$.

Now, we fix an arbitrary exponent $\bar{p}$ satisfying the condition~\eqref{p:ass}
and prove inequality~\eqref{eq:estim-p} for all $p\in (1,\bar{p})$.
Using the H\"older inequality and estimate~\eqref{eq:ineqp7} applied to $p = \bar{p}$ with $\bar{\eta} = d(\bar{p} - 1)/\bar{p}$,
we have
\begin{equation}\label{eq:ineqp8}
	\begin{split}
		\|u(t)\|_p
		& \le M^\frac{\bar{p} - p}{p(\bar{p} - 1)}
        \|u(t)\|_{\bar{p}}^\frac{\bar{p} (p  - 1)}{p(\bar{p} - 1)} \\
		& \le M^\frac{\bar{p} - p}{p(\bar{p} - 1)}
		\left(
            \max
            \left\{ 
        	\| u_0 \|_{\bar{p}}, C M^{1+\bar{\eta}/k}
            \right\}
        \right)^\frac{\bar{p} (p  - 1)}{p(\bar{p} - 1)} \\
		& \le \max \left\{ M^\frac{\bar{p} - p}{p(\bar{p} - 1)} \| u_0 \|_{\bar{p}}^\frac{\bar{p} (p  - 1)}{p(\bar{p} - 1)},
		C M^{1+\eta/k}
		\right\} \\
		& \le \max
		\left\{ M, \| u_0 \|_{\bar{p}},
		C M^{1+\eta/k}
		\right\}.
	\end{split}
\end{equation}

We have already proved estimate~\eqref{eq:estim-p} for $\varepsilon=1$ and for all $p\in (1,\infty)$. To extend this result to all $\varepsilon>0$, it suffices to use property (2) from
Remark~\ref{rem:scale} and the  substitution
$u(t,x)={\varepsilon^{-1}}u_\varepsilon \left(\varepsilon^{-1} t,x\right)$.
\end{proof}

\begin{lem}\label{lem:estim-global-pk12}
Let $d \ge 1$, $\varepsilon>0$
and $k \in (1,2)$,
and assume that the kernel $K$ satisfies assumption~\eqref{ass:K3}.
Fix a non-negative initial condition $u_0 \in L^1(\R^d) \cap L^\infty(\R^d)$.
Let  $u_\varepsilon = u_\varepsilon(t,x)$ be a non-negative solution to problem~\eqref{eq:main},
defined for all $t \ge 0$.
For every $p \in (1,\infty)$, there exists a number $C > 0$
\textup{(}independent of the solution $u_\varepsilon$ and
parameter $\varepsilon$\textup{)}
such that 
    \begin{equation}\label{eq:estim-pk12}
		\| u_\varepsilon(t) \|_p \le \max
        \left\{
        M,
		\| u_0 \|_{\max{\{\bar{p},p\}}}, 
		  C M^{1 + \eta/k} \varepsilon^{-\eta/k}
		\right\},
	\end{equation}
for all $t \ge 0$.
Here, the constant $\eta$ is defined in formula~\eqref{eq:def-pk},
and
$\bar{p} \in [2,\infty)$ is a fixed exponent that satisfies $\bar{p}>1/(1-(2-k)/d).$ 
\end{lem}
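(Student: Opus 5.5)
We follow the scheme of Lemma~\ref{lem:estim-global-p}. First we reduce to $\varepsilon=1$ using property (2) of Remark~\ref{rem:scale}, and write $u=u_1$. We fix $p\in[2,\infty)$ with $p>1/(1-(2-k)/d)$, multiply the equation by $(p-1)^{-1}u^{p-1}$, and integrate by parts. The only change from the case $k\le 1$ is in the drift term. There we no longer bound $\nabla K$ pointwise; we move one more derivative onto the kernel:
\begin{equation*}
-\int_{\R^d} u^{p-1}\nabla u\cdot(\nabla K\ast u)\,\d x
=-\frac1p\int_{\R^d}\nabla u^{p}\cdot(\nabla K\ast u)\,\d x
=\frac1p\int_{\R^d} u^{p}\,(\Delta K\ast u)\,\d x .
\end{equation*}
This integration by parts is legitimate because $u\in W^{2,p}$ by \eqref{regularity} and $\nabla K$ is bounded by \eqref{ass:K1}--\eqref{ass:K2}. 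The distributional Laplacian of $K$ has no singular part at the origin, since $|\nabla K|\le K_1|x|^{k-1}$ with $k-1>-(d-1)$ gives zero boundary flux. Hence, by \eqref{ass:K3},
\begin{equation*}
|\Delta K\ast u|\le K_3\,|\cdot|^{-(2-k)}\ast u .
\end{equation*}

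Next we apply Lemma~\ref{lem:herm2} with $a=2-k\in(0,1)$. This is exactly where the condition $p>1/(1-(2-k)/d)$ is used. It gives
\begin{equation*}
\||\cdot|^{-(2-k)}\ast u\|_\infty\le C M^{1-(2-k)/\eta}\|u\|_p^{(2-k)/\eta},
\end{equation*}
and therefore
\begin{equation*}
\frac{1}{p(p-1)}\frac{\d}{\d t}\|u\|_p^p\le -\frac{4}{p^2}\|\nabla u^{p/2}\|_2^2+C M^{1-(2-k)/\eta}\|u\|_p^{p+(2-k)/\eta}.
\end{equation*}
The Gagliardo--Nirenberg step \eqref{eq:ineqp3} carries over verbatim and gives $\|\nabla u^{p/2}\|_2^2\ge C M^{-2/\eta}\|u\|_p^{p+2/\eta}$. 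Substituting, we obtain
\begin{equation*}
\frac{\d}{\d t}\|u\|_p^p\le C_1 M^{1-(2-k)/\eta}\|u\|_p^{p+(2-k)/\eta}\Big(1-C_2M^{-1-k/\eta}\|u\|_p^{k/\eta}\Big).
\end{equation*}
This has the same structure as \eqref{eq:ineqpn1}: the exponents balance, and the threshold is again $\|u\|_p=CM^{1+\eta/k}$. The right-hand side is negative whenever $\|u\|_p$ exceeds this threshold, so a barrier argument as in \cite[Lemma~4.1]{BBKL21} yields
\begin{equation*}
\|u(t)\|_p\le\max\{\|u_0\|_p,\,CM^{1+\eta/k}\}.
\end{equation*}

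For $p\in(1,\bar p)$ we interpolate between $L^1$ and $L^{\bar p}$ exactly as in \eqref{eq:ineqp8}, using $M^{1+\bar\eta/k}$ and the interpolation exponents to recover $M^{1+\eta/k}$. Finally we undo the scaling via $u(t,x)=\varepsilon^{-1}u_\varepsilon(\varepsilon^{-1}t,x)$. Under this substitution the mass becomes $M/\varepsilon$, so $C(M/\varepsilon)^{1+\eta/k}$ turns into $CM^{1+\eta/k}\varepsilon^{-\eta/k}$ after multiplying by $\varepsilon$. The terms $\|u_0\|$ and $M$ scale the same way and need only the same bookkeeping as in Lemma~\ref{lem:estim-global-p}.

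The main obstacle is the rigorous justification of the identity $\nabla\cdot(\nabla K\ast u)=\Delta K\ast u$ with only the pointwise bound \eqref{ass:K3}. We would handle it by excising a small ball $B_\delta$ around the origin and checking that the boundary term is $O(\delta^{k-1+d-1})\to0$. We would also check that the tail of $\Delta K$ is harmless, since $|x|^{k-2}\le1$ for $|x|\ge1$.
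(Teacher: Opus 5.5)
Your proposal is correct and follows the paper's proof step by step. You reduce to $\varepsilon=1$, integrate the drift term by parts twice to obtain $\frac1p\int u^p(\Delta K\ast u)$, bound $\|\Delta K\ast u\|_\infty$ through Lemma~\ref{lem:herm2} with $a=2-k$, and combine this with \eqref{eq:ineqp3} to reach the same differential inequality as in the paper. You then interpolate as in \eqref{eq:ineqp8} for $p<\bar p$ and rescale. Your additional remark that \eqref{ass:K1} is what excludes a Dirac component in the distributional Laplacian of $K$ is a valid refinement that the paper passes over; without it, the pointwise bound \eqref{ass:K3} assumed in the lemma would not justify the identity $\nabla\cdot(\nabla K\ast u)=\Delta K\ast u$.
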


\begin{proof}
First, we consider an exponent
\begin{equation}\label{p:ass2}
p \in [2,\infty)
\quad\text{satisfying}
\quad p> \left( 1-\frac{2-k}{d} \right)^{-1}.
\end{equation}
In fact, the inequality
%$p > 1/(1 - (2 - k)/d)$
in~\eqref{p:ass2} imposes a restriction only if $d = 1$ and $k \in (1, 3/2]$; 
%\retka{3/2 should be included?}; 
however, this observation does not affect the subsequent reasoning.

We proceed as in the proof of Lemma~\ref{lem:estim-global-p}.
Assume that $\varepsilon = 1$. We multiply the first equation in~\eqref{eq:main} by $(p-1)^{-1} u^{p-1}$ and integrate over $\R^d$. Applying integration by parts (which is applied twice to the drift term) and H{\"o}lder's inequality, we obtain
\begin{equation}\label{eq:ineq-B1}
    \begin{split}
        \frac{1}{p(p-1)}  \frac{\d}{\d t}  \| u \|_p^p  & = 
        - \int_{\R^d} |\nabla u|^2 u^{p-2} \, \d x 
        + \frac{1}{p} \int_{\R^d} u^p \left(\Delta K \ast u\right) \, \d x \\
        & \le
        - \frac{4}{p^2} \| \nabla u^{p/2} \|_2^2
        + \frac{1}{p} \| u \|_p^p \, \| \Delta K \ast u \|_\infty.
    \end{split}
\end{equation}

Recalling that the kernel $K$ satisfies assumption~\eqref{ass:K3}, we apply Lemma~\ref{lem:herm2} with the parameter $a = 2-k$. The condition $p > (1 - (2-k)/d)^{-1}$ is satisfied due to \eqref{p:ass2}. We~obtain
\begin{equation}\label{eq:ineq-B2}
    \| \Delta K \ast u \|_\infty \le C_{p,k} \|u\|_1^{1 - (2-k)/\eta} \|u\|_p^{(2-k)/\eta},
\end{equation}
where the number $\eta$ was defined in formula~\eqref{eq:def-pk}.
Substituting this estimate into~\eqref{eq:ineq-B1} and using the mass conservation~\eqref{mass}, we have
\begin{equation}\label{eq:ineq-B3}
     \frac{1}{p(p-1)} \frac{\d}{\d t} \| u \|_p^p \le - \frac{4}{p^2} \| \nabla u^{p/2} \|_2^2 + C M^{1 - (2-k)/\eta} \|u\|_p^{p + (2-k)/\eta}.
\end{equation}
Applying the Gagliardo--Nirenberg--Sobolev estimate~\eqref{eq:ineqp3} to the term $\| \nabla u^{p/2} \|_2^2$ and combining it with inequality~\eqref{eq:ineq-B3}, we get
\begin{equation}\label{eq:ineqpn2-df}
	\begin{split}
		\frac{1}{p(p-1)} \frac{\d}{\d t} \| u \|_p^p \le & 
		C_1 M^{1 - (2-k)/\eta}
        \lVert u \rVert_p^{p + (2-k)/\eta}
        \left(
        1 - C_2 M^{-1 - k/\eta} \| u \|_p^{k/\eta}
        \right),
	\end{split}
\end{equation}
with some numbers $C_1, C_2 > 0$.

Once again, reasoning as in the proof of~\cite[Lemma~4.1]{BBKL21}, one can show that
\begin{equation*}
	\| u(t) \|_p \le \max \left\{ 
	\| u_0 \|_p, C M^{1+\eta/k} \right\}
	% \quad \text{for all} \quad t > 0,
\end{equation*}
for all $t \ge 0$, where the number $ C > 0$ is independent of $M$. 
The proof for $p \in (1,\bar{p})$ is analogous to the calculations in inequality~\eqref{eq:ineqp8}. The rescaling in terms of $\varepsilon$ follows from Remark~\ref{rem:scale}.
\end{proof}

In the following, while studying the case of $d=1$ and $k \in (1,2)$, we also require $\varepsilon$-dependent estimates of $\partial_x u_\varepsilon$, which we prove in the following lemma.

\begin{lem}\label{lem:estim-global-p-d1-k12}
Let $d = 1$, $\varepsilon>0$,
and $k \in [1,2)$,
and assume that the kernel $K$ satisfies assumption~\eqref{ass:K3}.
Fix a non-negative initial condition $u_0 \in W^{1,1}(\R) \cap W^{1,\infty}(\R)$.
Let  $u_\varepsilon = u_\varepsilon(t,x)$ be a non-negative solution to problem~\eqref{eq:main},
defined for all $t \ge 0$.
For every $p \in [2,\infty)$, there exists a number $C > 0$
{\rm (}independent of the solution $u_\varepsilon$ and of the
parameter $\varepsilon${\rm )} 
such that 
\begin{equation}\label{eq:estim:partial}
\|\partial_{x}u_{\varepsilon}(t)\|_{p}\le 
\max
        \left\{
		\| (u_0)_x \|_{p},  
		  C M^{1 + (1+\eta)/k} \varepsilon^{-(1+\eta)/k}
		\right\},
\end{equation}
for all $t \ge 0$.
In inequality~\eqref{eq:estim:partial}, the number $\eta$ is defined as in Lemma~\ref{lem:estim-global-pk12}.
\end{lem}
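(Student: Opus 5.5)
Following the scheme of Lemmas~\ref{lem:estim-global-p} and~\ref{lem:estim-global-pk12}, I would first set $\varepsilon=1$ (Remark~\ref{rem:scale}) and recover the $\varepsilon$-dependence by rescaling at the end. Write $v=u_x$. Differentiating the equation in $x$ gives
\[
v_t - v_{xx} = \partial_x^2\big(u\,(K'\ast u)\big)
= \partial_x\big(v\,(K'\ast u) + u\,(K''\ast u)\big).
\]
Here $K''\ast u$ is understood via~\eqref{ass:K3}; in $d=1$ we have $\Delta K=K''$ with $|K''(x)|\le K_3|x|^{k-2}$, which is locally integrable since $k>1$. I would multiply by $|v|^{p-2}v$ (using $p\ge2$) and integrate by parts, which is legitimate given the regularity~\eqref{regularity}. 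This yields
\[
\frac{1}{p-1}\frac{\d}{\d t}\|v\|_p^p
= -\frac{4}{p}\|\partial_x |v|^{p/2}\|_2^2
- \int |v|^{p-2}v_x\big(v\,(K'\ast u)+u\,(K''\ast u)\big)\,\d x .
\]

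The first drift term $\int |v|^{p-2}v\,v_x (K'\ast u)$ can be rewritten as $\frac1p\int \partial_x(|v|^p)(K'\ast u)$. Integrating by parts once more turns it into $-\frac1p\int |v|^p (K''\ast u)$, which is bounded by $\frac1p\|v\|_p^p\|K''\ast u\|_\infty$. By Lemma~\ref{lem:herm2} with $a=2-k$ and $q$ large, we have $\|K''\ast u\|_\infty\le C M^{1-(2-k)/\eta_q}\|u\|_q^{(2-k)/\eta_q}$. The factor $\|u\|_q$ is controlled uniformly in time by Lemma~\ref{lem:estim-global-pk12} for $k\in(1,2)$. For $k=1$ we have $K''\ast u$ bounded by $\|u\|_\infty$-type quantities; the $k=1$ case should be covered by the analogous estimates of~\cite{BBKL21}. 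For the second drift term I would use Hölder:
\[
\Big|\int |v|^{p-2}v_x\,u\,(K''\ast u)\Big|\le \|K''\ast u\|_\infty\,\|u\|_\infty\,\big\||v|^{(p-2)/2}v_x\big\|_2\,\|v\|_{p-2}^{(p-2)/2}.
\]
The factor $\||v|^{(p-2)/2}v_x\|_2$ equals $\frac2p\|\partial_x|v|^{p/2}\|_2$. I would absorb it by Young's inequality, leaving $C\|v\|_{p-2}^{p-2}$, which I interpolate between $\|v\|_1$-type and $\|v\|_p$ norms. The quantity $\|u\|_\infty$ is bounded via the one-dimensional embedding $\|u\|_\infty\le C\|u\|_1^{1/2}\|u_x\|_1^{1/2}$, or more simply by interpolating $u$ between $L^q$ and $\|v\|_p$.

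Next, the dissipation is bounded from below as in~\eqref{eq:ineqp3}, via Gagliardo--Nirenberg (Lemma~\ref{GN}) applied to $|v|^{p/2}$. Since $\|v\|_1$ is not conserved, I would instead interpolate $v$ against $u$ itself, e.g.\ $\|v\|_p\le C\|\partial_x v\|_p^{\alpha}\|u\|_{q}^{1-\alpha}$, with $\|u\|_q$ controlled by Lemma~\ref{lem:estim-global-pk12}. The aim is a differential inequality of the same shape as~\eqref{eq:ineqpn2-df}:
\[
\frac{\d}{\d t}\|v\|_p^p\le C_1 A\,\|v\|_p^{\gamma}\big(1 - C_2 B\,\|v\|_p^{\delta}\big),\qquad \delta>0,
\]
with $A,B$ depending only on $M$ through powers fixed by the scaling. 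The argument of \cite[Lemma~4.1]{BBKL21} then gives $\|v(t)\|_p\le\max\{\|(u_0)_x\|_p,\,C'\}$. The mass exponent $M^{1+(1+\eta)/k}$ is dictated by the scaling remark (mass dependence via scaling), and the factor $\varepsilon^{-(1+\eta)/k}$ comes from applying $u_\varepsilon(t,x)=\varepsilon u(\varepsilon t,x)$ to $\varepsilon^{-1}u_\varepsilon$, exactly as in the previous lemmas.

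The main obstacle will be the second drift term $u\,(K''\ast u)$, which is not of transport form. Closing the inequality requires the lower-order remainder to be dominated by the dissipation for large $\|v\|_p$, i.e.\ the exponents must produce a genuine $\delta>0$. I would first try bounding $\|u\|_\infty\lesssim \|u\|_q^{1-\theta}\|v\|_p^{\theta}$ with $q$ large so that $\theta$ is small, and check that the total power of $\|v\|_p$ in the remainder stays strictly below that of the dissipation lower bound. Uniformity in time is secured because all $u$-norms used are time-uniform by Lemma~\ref{lem:estim-global-pk12}.
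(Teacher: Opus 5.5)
\textbf{Verdict: there is a genuine gap; the two steps that close the estimate are missing.} Your setup matches the paper: rescale to $\varepsilon=1$, test the $x$-differentiated equation with $|u_x|^{p-2}u_x$, and rewrite $\int|u_x|^{p-2}u_xu_{xx}(K'\ast u)$ as $-\frac1p\int|u_x|^p(K''\ast u)$.

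The first missing step is the lower bound for the dissipation. The dissipative term is $\|(|u_x|^{p/2})_x\|_2^2=\frac{p^2}{4}\int|u_x|^{p-2}u_{xx}^2\,\d x$. This is a degenerate weighted quantity, and for $p>2$ it does not control $\|u_{xx}\|_p$, because the weight vanishes where $u_x$ does. So your interpolation $\|u_x\|_p\le C\|u_{xx}\|_p^{\alpha}\|u\|_q^{1-\alpha}$ cannot produce the factor $(1-C_2B\|v\|_p^\delta)$. The missing idea is to tie $\|u_x\|_p$ to the mass through the integration by parts
\begin{equation*}
\int_\R|u_x|^p\,\d x=-(p-1)\int_\R u\,|u_x|^{p-2}u_{xx}\,\d x .
\end{equation*}
Follow it with Cauchy--Schwarz, the bound $\|u\|_\infty\le CM^{1/2}\|u_x\|_\infty^{1/2}$, and Gagliardo--Nirenberg for $|u_x|^{p/2}$. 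This gives \eqref{uxp-B}, namely $\||u_x|^{p/2}\|_2\le CM^{p/(2p+1)}\|(|u_x|^{p/2})_x\|_2^{(2p-1)/(2p+1)}$, and from it \eqref{uxp-C1}--\eqref{uxp-C2}. Consequently $\|u\|_\infty$, $\|u\|_2$ and $\|K''\ast u\|_\infty\le CM^{k-1}\|u\|_\infty^{2-k}$ are all expressed through $M$ and the dissipation alone.

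The second missing step is the non-transport term. Your H\"older split puts $u$ in $L^\infty$ and leaves $\||u_x|^{(p-2)/2}\|_2=\|u_x\|_{p-2}^{(p-2)/2}$, which fails across the whole range of $p$:
\begin{itemize}
\item at $p=2$ it equals $\|1\|_{L^2(\R)}=\infty$;
\item for $p\in(2,3)$ it is an $L^{p-2}$ quasi-norm that nothing controls on $\R$;
\item for larger $p$ your interpolation needs $\|u_x\|_1$, which is neither conserved nor bounded by anything available.
\end{itemize}
Also, the embedding $\|u\|_\infty\le C\|u\|_1^{1/2}\|u_x\|_1^{1/2}$ you cite is false by scaling $u(\lambda x)$, $\lambda\to\infty$. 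The paper pairs the factors the other way:
\begin{equation*}
\int u|u_x|^{p-2}|u_{xx}|\le\|u\|_2\,\||u_x|^{(p-2)/2}u_{xx}\|_2\,\|u_x\|_\infty^{(p-2)/2},
\end{equation*}
with $\|u_x\|_\infty^{p/2}=\||u_x|^{p/2}\|_\infty$ handled by Gagliardo--Nirenberg. Combined with \eqref{uxp-B}--\eqref{uxp-C2}, this yields \eqref{uxp-A}.

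Finally, importing $\|u\|_q$ from Lemma~\ref{lem:estim-global-pk12} puts $\|u_0\|_{\max\{\bar p,q\}}$ into your constants. So even a closed argument along your lines would not give \eqref{eq:estim:partial} with $C$ independent of the solution. The power $M^{1+(1+\eta)/k}$ does not come from the scaling remark, which is only a heuristic for homogeneous $K$. It comes from the fact that the paper's closure involves only $M$ and $\|(|u_x|^{p/2})_x\|_2$.
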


\begin{proof}
Applying the same reasoning as in the previous lemmas, we begin by setting $\varepsilon=1$ and denoting $u(t,x)=u_1(t,x)$.

To establish this proof, we need the inequality
\begin{equation} \label{uxp-A}
\frac{\d}{\d t} \Vert |u_x|^{p/2} \Vert_2^2
\le
-C \Vert (|u_x|^{p/2})_x \Vert_2^2
+
C M^{1 + kp/(2p + 1)}
\Vert (|u_x|^{p/2})_x \Vert_2^{2-2k/(2p+1)}
\end{equation}
and the estimate 
\begin{equation} \label{uxp-B}
\Vert |u_x|^{p/2} \Vert_2 \le C M^{p/(2p+1)} \Vert (|u_x|^{p/2})_x \Vert_2^{(2p-1)/(2p+1)}.
\end{equation}
We will also need the following crucial estimate:
\begin{equation} \label{uxp-C1}
\Vert u \Vert_{\infty} \le C M^{(p+1)/(2p+1)} \Vert (|u_x|^{p/2})_x \Vert_2^{2/(2p+1)}
\end{equation}
and its corollary, obtained by interpolating between $M$ and $\Vert u \Vert_{\infty}$:
\begin{equation} \label{uxp-C2}
\Vert u \Vert_{2} \le C M^{(3p+2)/(4p+2)} \Vert (|u_x|^{p/2})_x \Vert_2^{1/(2p+1)}.
\end{equation}

We begin by proving inequality~\eqref{uxp-B}. We integrate by parts, and then we use the Cauchy-Schwarz and the H{\"o}lder inequalities:
\begin{equation*}
	\begin{split}
\Vert |u_x|^{p/2} \Vert_2^2
    & = - (p-1) \int_\R{u |u_x|^{p-2} u_{xx}}\,\d x
    \\
    & \le (p-1)
    \left(
    \int_\R{u |u_x|^{p-2}} u_{xx}^2\, \d x
    \right)^{1/2} 
    \left(
    \int_\R{u |u_x|^{p-2}} \, \d x
    \right)^{1/2}
    \\
    & \le  (p-1)
    \left(
    \frac{2}{p}
    \Vert u \Vert_{\infty}^{1/2} \Vert (|u_x|^{p/2})_x \Vert_2 
    \right) 
    \left(
    M^{1/2} \Vert u_x \Vert_{\infty}^{(p-2)/2}
    \right).
	\end{split}
\end{equation*}
Now we apply the Gagliardo-Nirenberg-Sobolev inequality, first to $u$ and then to $(u_x)^{p/2}$:
\begin{equation*}
	\begin{split}
    \Vert |u_x|^{p/2} \Vert_2^2  & \le C \big(M^{1/4} \Vert u_x \Vert_{\infty}^{1/4} \Vert (|u_x|^{p/2})_x \Vert_2 \big)  \big(M^{1/2} \Vert u_x \Vert_{\infty}^{(p-2)/2} \big)
    \\
    & = C M^{3/4}  \Vert |u_x|^{p/2} \Vert_{\infty}^{(2p-3)/2p} \Vert (|u_x|^{p/2})_x \Vert_2
    \\
    & \le C M^{3/4}  \Vert |u_x|^{p/2} \Vert_{2}^{(2p-3)/4p} \Vert (|u_x|^{p/2})_x \Vert_2^{(6p-3)/4p}.
	\end{split}
\end{equation*}
Therefore, we obtain
\begin{equation*}
\Vert |u_x|^{p/2} \Vert_2^{(6p+3)/4p} \le C M^{3/4} \Vert (|u_x|^{p/2})_x \Vert_2^{(6p-3)/4p},
\end{equation*}
which implies inequality~\eqref{uxp-B}.

Now we prove inequality~\eqref{uxp-C1} by using the Gagliardo-Nirenberg-Sobolev inequality for $u$
and inequality \eqref{uxp-B}:
\begin{equation*}
	\begin{split}
\Vert u \Vert_{\infty}  & \le C M^{(p-1)/(2p-1)} \Vert u_x \Vert_p^{p/(2p-1)}
    \\
    & = C M^{(p-1)/(2p-1)} \Vert |u_x|^{p/2} \Vert_2^{2/(2p-1)}
    \\
    & \le C M^{(p-1)/(2p-1)} M^{2p/((2p+1)(2p-1))} \Vert (|u_x|^{p/2})_x \Vert_2^{2/(2p+1)}
    \\
    & = C M^{(p+1)/(2p+1)} \Vert (|u_x|^{p/2})_x \Vert_2^{2/(2p+1)}.
	\end{split}
\end{equation*}
Now we are ready to prove \eqref{uxp-A}. 
Using the equation for $u_x$ and integrating by parts we get:
\begin{equation}\label{eq:uxp-df}
	\begin{split}
		\frac{1}{p(p-1)}  \frac{\d}{\d t}  \| |u_x|^{p/2} \|_2^2  \le & 
-\frac{4}{p^2} \Vert (|u_x|^{p/2})_x \Vert_2^{2}+\int_\R{|u_x|^{p-2} u_{xx} 
(u (K' \ast u))_x} \,\d x. 
	\end{split}
\end{equation}
Denoting the second term on the right-hand side by $N$ and integrating by parts again, we obtain
\begin{equation*}
	\begin{split}
    N&= \int_\R{|u_x|^{p-2} u_x u_{xx} (K' \ast u)}\, \d x +\int_\R{u |u_x|^{p-2} u_{xx} (K'' \ast u)}\, \d x
    \\
    &= -\frac{1}{p} \int_\R {|u_x|^{p} (K'' \ast u)} \, \d x+\int_\R{u |u_x|^{p-2} u_{xx} (K'' \ast u)}\, \d x.
	\end{split}
\end{equation*}
Therefore, applying the H{\"o}lder inequality twice, then Lemma~\ref{lem:herm2} for $p=\infty$ and finally the Gagliardo-Nirenberg-Sobolev inequality applied to $(u_x)^{p/2}$, we get:
\begin{equation*}
	\begin{split}
    |N| & \le C \left( \Vert |u_x|^{p/2} \Vert_2^2
    +
    \int_\R{ u |u_x|^{p-2} |u_{xx}| }\, \d x
    \right) \Vert K'' \ast u \Vert_{\infty}
    \\
    & \le C \left( \Vert |u_x|^{p/2}  \Vert_2^2 + \left(\int{u^2\,\d x}\right)^{1/2} \left(\int{u_{xx}^2|u_x|^{p-2}} \, \d x\right)^{1/2} \Vert u_{x} \Vert_{\infty}^{(p-2)/2}
    \right) \Vert K'' \ast u \Vert_{\infty}
    \\
    & \le C \left( \Vert |u_x|^{p/2} \Vert_2^2 +
    \frac{2}{p} \Vert u \Vert_2 \Vert (|u_x|^{p/2})_x \Vert_2 \Vert |u_x|^{p/2} \Vert_{\infty}^{(p-2)/p}
    \right) \Vert u \Vert_1^{k-1} \Vert u \Vert_{\infty}^{2-k}
    \\
    & \le C\left( \Vert |u_x|^{p/2} \Vert_2^2 + \Vert u \Vert_2 \Vert (|u_x|^{p/2})_x \Vert_2^{(3p-2)/2p} \Vert |u_x|^{p/2} \Vert_{2}^{(p-2)/2p}
    \right) M^{k-1} \Vert u \Vert_{\infty}^{2-k}.
	\end{split}
\end{equation*}
Now, inequality \eqref{uxp-A} follows from \eqref{uxp-B}, \eqref{uxp-C1} and \eqref{uxp-C2}.

To conclude (as in the two preceding lemmas), it remains to observe that inequality~\eqref{uxp-B} is equivalent to the following one
$$
\Vert (|u_x|^{p/2})_x \Vert_2 \ge C M^{-p/(2p-1)} 
\Vert |u_x|^{p/2} \Vert_2^{(2p+1)/(2p-1)}.
$$
Therefore, by \eqref{uxp-A} we obtain 
\begin{align*} 
\frac{\d}{\d t} \Vert |u_x|^{p/2} \Vert_2^2
& \le C \Vert (|u_x|^{p/2})_x \Vert_2^2
\left(-1+C M^{1+kp/(2p+1)} \Vert (|u_x|^{p/2})_x \Vert_2^{-2k/(2p+1)}\right)
\\
& \le C \Vert (|u_x|^{p/2})_x \Vert_2^2
\left(-1+C M^{1+kp/(2p+1)} M^{2kp/(4p^2-1)} \Vert |u_x|^{p/2} \Vert_2^{-2k/(2p-1)}\right)
\\
& \le C \Vert (|u_x|^{p/2})_x \Vert_2^2
\Vert |u_x|^{p/2} \Vert_2^{-2k/(2p-1)} \\
& \quad \times \left(-(\Vert |u_x|^{p/2} \Vert_2^{2/p})^{kp/(2p-1)}+C M^{1+kp/(2p-1)}\right)
\\
& \le C \Vert (|u_x|^{p/2})_x \Vert_2^2
\Vert |u_x|^{p/2} \Vert_2^{-2k/(2p-1)}
\left(
-\Vert u_x \Vert_p^{kp/(2p-1)}
+C M^{1+kp/(2p-1)}
\right).
\end{align*}
Consequently, since
$$
\frac{2p-1}{kp} \left( 1+\frac{kp}{2p-1} \right)=1+\frac{2p-1}{kp}
=1+\frac{1+\eta}{k},
$$
(recalling that here $\eta=(p-1)/p$ since $d=1$), as in the two preceding lemmas we obtain
$$
\|\partial_{x}u(t)\|_{p}\le 
\max
        \left\{
		\| (u_0)_x \|_{p},  
		  C M^{1 + (1+\eta)/k}
		\right\},
$$
and it only remains to rescale with respect to $\varepsilon$ to obtain the general case.
\end{proof}

%%%%%%%%%%%%%%%%%%%%%%%%%%%%%%%%%%%%%%%%%%%%%%%%%%%%%%%%%%%%%

\section{Concentration lemma}\label{sec:cos}

We describe the concentration
of a radial solution $u_\varepsilon$ to problem~\eqref{eq:main}  
at the origin
by studying the following quantity
\begin{equation}\label{eq:def-D:n}
\D(u_\varepsilon)(t)  =  \int_{\R^d}
\varphi
( |x|^{2-k} ) 
\Delta u_\varepsilon(t,x) \, \d x,
\end{equation}
where the function $\varphi$ %(|x|^{2-k})$
is defined in Remark~\ref{rem:def-varphi} and the parameter $k\in (0,2)$ is determined by the integration kernel $K=K(x)$ via Assumptions~\ref{ass:gen} and~\ref{ass:h}.
The following result is crucial for proving Theorem~\ref{thm:main} and it 
states that $\D(u_\varepsilon)$ %after integration in time 
grows at least as $\varepsilon^{-1}$ when $\varepsilon \searrow 0$.

\begin{lem}[Concentration lemma]\label{lem:conc}
Let $u_\varepsilon = u_\varepsilon(t,x)$ be a radial, non-negative, global-in-time solution  to problem~\eqref{eq:main} with $\varepsilon > 0$, corresponding to a radial, non-negative initial condition $u_0 \in L^1(\R^d) \cap L^\infty(\R^d)$.
There exist numbers
    $$
    \mu\in (0,1), \quad
	\omega > 0, \quad 
	T_0 > 0
    $$
\textup{(}see definitions~\eqref{eq:def-mu-omega} and~\eqref{eq:def-T0}, below\textup{)},    
depending only on
the dimension $d$,
the kernel $K$, the exponent $k$,
the mass $M=\int_{\R^d} u(x,t)\, \d x$, and
the cut-off function $\varphi$ 
{\rm (}given in Remark~\ref{rem:def-varphi}{\rm )},
such that if
    \begin{equation}\label{eq:u0-concentration}
        \int_{\R^d}
        \varphi ( |x|^{2-k} ) u_0(x)\,\d x
        < \mu M
        % \int_{\R^d} u_0(x)\,\d x
    \end{equation}
    then for every $T > T_0$ there exists $L = L(T) > 0$
    %\retka{$L$ also depends on the same parameters as $\mu$ etc.}
    {\rm (}see definition~\eqref{eq:def-L}, below{\rm )},
    such that
\begin{equation}\label{eq:estim-D}
    \varepsilon\int_0^{T}
    \D(u_\varepsilon)(t)
    e^{-\omega M t}
    \, \d t
    \ge
    L
\end{equation}   
for all $\varepsilon>0$.
\end{lem}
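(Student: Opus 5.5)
The plan is to derive a differential inequality for the truncated moment
\[
I(t)=\int_{\R^d}\varphi(|x|^{2-k})\,u_\varepsilon(t,x)\,\d x
\]
of the form announced in the remark after Corollary~\ref{cor:lp-lower}:
\begin{equation}\label{eq:diff-ineq}
\frac{\d}{\d t} I(t) \le \varepsilon\D(u_\varepsilon)(t) + \omega M\big(-\mu M + I(t)\big),
\end{equation}
and then integrate it against the weight $e^{-\omega M t}$. Differentiating $I$ and using the equation, the diffusion part gives exactly $\varepsilon\D(u_\varepsilon)$. For the drift part, integrate by parts and symmetrize in $x\leftrightarrow y$ (using that $\nabla K$ is odd):
\[
-\frac12\iint \big(\nabla\Phi(x)-\nabla\Phi(y)\big)\cdot\nabla K(x-y)\,u(x)u(y)\,\d x\,\d y ,
\]
where $\Phi(x)=\varphi(|x|^{2-k})$.

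The core step is a pointwise lower bound on the interaction integrand
\[
\big(\nabla\Phi(x)-\nabla\Phi(y)\big)\cdot\nabla K(x-y)\ \ge\ 2\omega\big(1-\Phi(x)-\Phi(y)\big)
\]
or a similar bound. On the region $|x|,|y|\le 1/2$ we have $\Phi=|x|^{2-k}$, and $\nabla K(z)=k\,h'(|z|^k)|z|^{k-2}z$. The quantity
\[
\big(|x|^{-k}x-|y|^{-k}y\big)\cdot(x-y)\,|x-y|^{k-2}
\]
is bounded below by a positive constant: this is the standard monotonicity of $z\mapsto|z|^{-k}z$, and it is homogeneous of degree $0$. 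Combined with $h'\ge c_1$ from~\eqref{ass:K5}, this gives a positive lower bound $c$ near the origin. Where $x$ or $y$ lies far away, the integrand is bounded in absolute value. For $k\le1$ this uses $|\nabla\Phi(x)-\nabla\Phi(y)|\lesssim \min(1,|x-y|^{1-k})$ against~\eqref{ass:K1}. For $k\in(1,2)$ it uses~\eqref{ass:K2}, the $C^2$ control~\eqref{ass:K6} of $h$, and the H\"older continuity of $\nabla\Phi$. Since $1-\Phi(x)-\Phi(y)\le0$ whenever one point is outside the unit region (where $\Phi\ge c'$), a combination $\ge c\,\mathbf 1_{\text{near}}-C\,\mathbf 1_{\text{far}}$ can be dominated as required. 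Integrating then yields a drift term $\le -\omega M^2+C M I$. After adjusting constants this is $\omega M(-\mu M+I)$ with $\mu=1/(\text{ratio of constants})\in(0,1)$; this fixes~\eqref{eq:def-mu-omega}. Radial symmetry is not essential here but simplifies the geometry. I expect this pointwise kernel estimate, in particular handling the transition region $1/2\le|x|\le3/2$ where $\varphi$ is not explicit and the case $k>1$, to be the main obstacle. I would first prove it for $|x|,|y|\le1/2$ exactly and absorb everything else using $\varphi''\le0$ and the crude bounds.

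With~\eqref{eq:diff-ineq} in hand, multiply by $e^{-\omega M t}$:
\[
\frac{\d}{\d t}\big(e^{-\omega Mt}I\big)\le \varepsilon e^{-\omega Mt}\D(u_\varepsilon)-\mu\omega M^2e^{-\omega Mt}.
\]
Integrating over $[0,T]$ and using $I(T)\ge0$ gives
\[
\varepsilon\int_0^T\D(u_\varepsilon)e^{-\omega Mt}\,\d t\ \ge\ \mu M\big(1-e^{-\omega MT}\big)-I(0).
\]
By~\eqref{eq:u0-concentration}, $I(0)<\mu M$, so the right-hand side is positive for $T>T_0$ with
\[
T_0=-\frac{\log\big(1-I(0)/(\mu M)\big)}{\omega M}.
\]
This is essentially~\eqref{eq:def-T0}; if $T_0$ must depend only on $M$, replace $I(0)$ by a fixed fraction. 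We then set
\[
L(T)=\mu M\big(1-e^{-\omega MT}\big)-I(0)>0.
\]
The parabolic regularity from Proposition~\ref{thm:K2-local} justifies differentiation near $t=0$: work on $[\delta,T]$ and let $\delta\to0$ using $L^1$-continuity.
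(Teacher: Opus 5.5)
There is a genuine gap, and it sits in the step you identified as the core one. The claimed positive lower bound for $\Phi_0(x,y)=\big(|x|^{-k}x-|y|^{-k}y\big)\cdot(x-y)\,|x-y|^{k-2}$ is false for every $k\in(0,2)$. Degree-$0$ homogeneity gives no compactness here, because the singular set $\{x=0\}\cup\{y=0\}\cup\{x=y\}$ meets the unit sphere of $\R^d\times\R^d$.

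\emph{Case $k\in(0,1]$.} Monotonicity of $z\mapsto|z|^{-k}z$ only gives $\Phi_0\ge0$ (cf.\ Lemma~\ref{lem:0A}), and the infimum is $0$. For $k=1$, $\Phi_0(x,\lambda x)=0$ for all $\lambda>0$, $\lambda\neq1$. For $k<1$, the local Lipschitz bound of $z\mapsto|z|^{-k}z$ gives $\Phi_0(x,y)\le C|x|^{-k}|x-y|^k\to0$ as $y\to x\neq0$.

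\emph{Case $k\in(1,2)$.} The map is not monotone at all: in $d=1$ with $x>y>0$ one has $\Phi_0(x,y)=(x^{1-k}-y^{1-k})(x-y)^{k-1}<0$, and this tends to $-\infty$ as $y\to0^+$. Your far-region bound fails here too. The gradient $\nabla\big(\varphi(|x|^{2-k})\big)=(2-k)\varphi'(|x|^{2-k})|x|^{-k}x$ blows up like $|x|^{1-k}$ at the origin, so it is not H\"older continuous. Consequently the unsymmetrized integrand is unbounded, e.g.\ as $y\to0$ along the direction of a fixed $x\neq0$ with $|x|^k\le r_0$.

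In fact, without evenness of $u$ your target differential inequality is false. Take $k<1$ and let $u$ be a bump of width $\rho\ll|x_0|$ centred at a small $x_0\neq0$. The drift term is then $O\big(M^2|x_0|^{-k}\rho^k\big)$. The inequality, however, would force it to be $\le-\omega M(\mu M-I)\approx-\omega\mu M^2$.

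The missing idea is therefore exactly the one you set aside: radial symmetry is essential. Since $u(t,-y)=u(t,y)$, the substitution $y\mapsto-y$ lets one replace the integrand by $\tfrac12\Psi(x,y)$, where $\Psi(x,y)=\Phi(x,y)+\Phi(x,-y)$; it is $\Psi$, not $\Phi$, that has the needed bounds.

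\emph{Homogeneous case, $k\le1$.} The cross terms cancel:
$\big(\tfrac{x}{|x|^k}-\tfrac{y}{|y|^k}\big)\cdot(x-y)+\big(\tfrac{x}{|x|^k}+\tfrac{y}{|y|^k}\big)\cdot(x+y)=2\big(|x|^{2-k}+|y|^{2-k}\big)$.
Both numerators are nonnegative and $|x\pm y|^{2-k}\le2^{1-k}\big(|x|^{2-k}+|y|^{2-k}\big)$, so $\Psi_0\ge2^k$.

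\emph{Homogeneous case, $k\in(1,2)$.} The substitution $z=x+y$, $w=x-y$ turns $\Psi_0$ into $2^{k-1}$ times the same expression with exponent $2-k\in(0,1)$, whence $\Psi_0\ge2$ (Lemma~\ref{lem:1A}).

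\emph{General kernel.} Perturbing by $h'$ near $0$ gives $\Psi\ge\psi_1$ on $B_{r_1}\times B_{r_1}$. The same symmetrization, again through $z,w$ when $k>1$, removes the singularity at the origin and gives $|\Psi|\le\psi_2$ everywhere (Lemmas~\ref{lem:Psi:lowerbound} and~\ref{lem:Psi:upperbound}).

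Once the pointwise bounds are stated for $\Psi$ rather than for the unsymmetrized integrand, your remaining steps coincide with the paper's proof. These are: splitting off $B_{r_1}\times B_{r_1}$, using $\varphi(|x|^{2-k})\ge r_1^{2-k}$ outside $B_{r_1}$ to bound the rest by $CM\,I(t)$, and integrating against $e^{-\omega Mt}$.
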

\begin{proof}
To simplify the notation in this proof, we write $u$ instead of $u_\varepsilon$. 
Note that a key difference from the proofs in Section~\ref{sec:git} is that here we 
consider all $\varepsilon > 0$; we cannot simply choose $\varepsilon = 1$ and 
proceed by a rescaling argument.
%\retka{rather "here there is no simple rescaling argument"?}

We introduce the truncated moment of order $2-k$
	\begin{equation}\label{ew:def-It}
		I(t) \equiv \int_{\R^d} \varphi(|x|^{2-k}) u(t,x) \, \d x, %\quad \text{for all } t \ge 0,
	\end{equation}
where the function $\varphi$ is defined in Remark~\ref{rem:def-varphi}.

Our  goal is to derive a differential inequality for $I(t)$ (see~inequality \eqref{eq:diff-ineq}, below).
We multiply equation~\eqref{eq:main} by
$\varphi ( |x|^{2-k} )$
and integrate the resulting identity with respect to $x\in \R^d$ to obtain the relation
    \begin{equation}\label{eq:moment}
        \frac{\d}{\d t} I(t)= \varepsilon \D(u)(t)+J(t),
    \end{equation}
where
\begin{equation*}
    J(t) \equiv \int_{\R^d}
    \varphi ( |x|^{2-k} )
    \nabla \cdot 
    \left(u(t,x) \nabla K \ast u(t,x) \right)
    \, \d x. 
\end{equation*}

Integrating by parts, using the radial symmetry of the kernel $K$ and the definition of the weight 
$\varphi$, and applying a symmetrization argument, we obtain
\begin{equation*}
    \begin{split}
        J(t)
        = & - (2-k) \int_{\R^d} u(t,x) \varphi'
        ( |x|^{2-k} ) \frac{x}{|x|^k} \cdot \nabla K \ast u(t,x) \, \d x \\
        = & - k(2-k) \int_{\R^d} \int_{\R^d}
        u(t,x) u(t,y)
        \varphi' ( |x|^{2-k} )
        \frac{x}{|x|^k}
        \cdot
        \frac{x-y}{|x-y|^{2-k}}
        h' ( |x-y|^k )
        \, \d y \, \d x \\
        = & - \frac{k(2-k)}{2}
        \int_{\R^d} \int_{\R^d}
        u(t,x) u(t,y) 
        \Phi(x,y) \, \d x \, \d y,
    \end{split}
\end{equation*}
where
\begin{equation}\label{eq:def-Phi}
    \Phi(x,y) \equiv
    \left(
        \varphi' ( |x|^{2-k} ) \frac{x}{|x|^k} -
        \varphi' ( |y|^{2-k} ) \frac{y}{|y|^k}
    \right)
    \cdot
    \frac{x-y}{|x-y|^{2-k}}
    h' ( |x-y|^k ).
\end{equation}
Since $u$ is radially symmetric, using the substitution $y \mapsto -y$, we obtain
\begin{equation}\label{eq:J}
    J(t) = - \frac{k(2-k)}{4} \int_{\R^d} \int_{\R^d}
    u(t,x) u(t,y) \big( \Phi(x,y) + \Phi(x,-y) \big) \, \d x \, \d y.
\end{equation}

We introduce the function
\begin{equation}\label{eq:def-psi}
    \Psi(x,y) \equiv  \Phi(x,y) + \Phi(x,-y),
\end{equation}
which has properties studied in Appendix~\ref{appendix}.
In particular, by Lemmas \ref{lem:Psi:lowerbound} and \ref{lem:Psi:upperbound},
there exist numbers $r_1\in(0,1/2]$, $\psi_1, \psi_2 > 0$ such that
\begin{equation}\label{eq:phi-lower}
	\Psi(x,y) \ge \psi_1
    \quad
    \text{for all}
    \quad
    (x,y) \in B_{r_1} \times B_{r_1}
\end{equation}
and
\begin{equation}\label{eq:phi-upper}
	|\Psi(x,y)| \le \psi_2 
    \quad
    \text{for all}
    \quad
    (x,y) \in
    \left(
    \R^d \times \R^d
    \right)
    \setminus 
    \left(
    B_{r_1} \times B_{r_1}
    \right)
    .
\end{equation}

%\retka{Maybe only introduce the upper bound later when we use it?}
Consequently, by inequality \eqref{eq:phi-lower}, the quantity
	$$
	J_1(t) \equiv
	 - \frac{k(2-k)}{4} \int_{B_{r_1}} \int_{B_{r_1}}
		u(t,x) u(t,y) \Psi(x,y) \, \d x \, \d y,
	$$
can be estimated from above:
	$$
	J_1(t) \le
	 - \frac{k(2-k) \psi_1}{4} \int_{B_{r_1}} \int_{B_{r_1}}
		u(t,x) u(t,y) \, \d x \, \d y.
	$$
By conservation of mass~\eqref{mass:0}, the inclusion
\begin{equation}\label{eq:R-incl}
    (\R^d \times \R^d) \setminus (B_{r_1} \times B_{r_1})
    \subset
    \big( \R^d \times (\R^d \setminus B_{r_1}) \big) \cup \big( (\R^d \setminus B_{r_1}) \times \R^d \big),
\end{equation}
the symmetry of $u(\cdot,t)$ and the inequality (cf. properties \eqref{eq:phi-prop}-\eqref{eq:phi-prop3})
\begin{equation}\label{eq:phi-estim}
	r_1^{2-k}
	\le
	\varphi(|x|^{2-k})
	\quad
	\text{for all }
	x \in \R^d \setminus B_{r_1},
\end{equation}
we conclude that
		\begin{equation}\label{eq:estimJl1}
	\begin{aligned}
		J_1(t) & \le
		- \frac{k(2-k) \psi_1}{4}
		M^2
		+
		\frac{k(2-k) \psi_1}{4}
		\int_{ ( \R^d \times \R^d )
			\setminus	
			( B_{r_1} \times B_{r_1} )}
		u(t,x) u(t,y) \, \d x \, \d y \\
		& \le
		- \frac{k(2-k) \psi_1}{4}
		M^2
		+
		\frac{k(2-k) \psi_1}{2}
		\int_{\R^d}
		\int_{\R^d \setminus B_{r_1}}
		u(t,x) u(t,y) \, \d x \, \d y \\
		& =
		- \frac{k(2-k) \psi_1}{4}
		M^2
		+
		\frac{k(2-k) \psi_1}{2}
		M \frac{1}{r_1^{2-k}}
		\int_{\R^d \setminus B_{r_1}} r_1^{2-k} u(t,x) \, \d x \\
		& \le
		- \frac{k(2-k) \psi_1}{4}
		M^2
		+
		\frac{k(2-k) \psi_1}{2 r_1^{2-k}}
		M I(t).
	\end{aligned}
\end{equation}

In the next step, 
combining inequality~\eqref{eq:phi-upper} with relations \eqref{eq:R-incl}-\eqref{eq:phi-estim} we obtain
	\begin{equation}\label{eq:estim-difr}
	\begin{split}
		| J(t) - J_1(t) | & \le
		\frac{k(2-k)}{4}
		\int_{ ( \R^d \times \R^d )
			\setminus	
			( B_{r_1} \times B_{r_1} )}
		u(t,x) u(t,y) \left| \Psi(x,y) \right| \, \d x \, \d y
		\\
		& \le
		\frac{k(2-k) \psi_2}{4}
		\int_{ ( \R^d \times \R^d )
			\setminus	
			( B_{r_1} \times B_{r_1} )}
		u(t,x) u(t,y) \, \d x \, \d y \\
		& \le
		\frac{k(2-k) \psi_2}{2}
		\int_{\R^d}
		\int_{\R^d \setminus B_{r_1}}
		u(t,x) u(t,y) \, \d x \, \d y \\
		& \le
		\frac{k(2-k) \psi_2}{2 r_1^{2-k}}
		M I(t).
	\end{split}
\end{equation}
Gathering %identity~\eqref{eq:J} %, definition~\eqref{eq:def-D:n} 
 estimates~\eqref{eq:estimJl1}--\eqref{eq:estim-difr}, we obtain the differential inequality
	\begin{equation}\label{eq:diff-ineq}
		\frac{\d}{\d t} I(t)\le
		\varepsilon \D(u)(t)
		- \mu \omega M^2
		+ \omega M I(t)
		\quad \text{for all } t \ge 0,
	\end{equation}
	with the constants 
	\begin{equation}\label{eq:def-mu-omega}
    	\mu = \frac{\psi_1 r_1^{2-k}}{2(\psi_1 + \psi_2)} \quad\text{and}\quad 
		\omega =
		\frac{k(2-k)}{2r_1^{2-k}}
		\left( \psi_1 + \psi_2 \right),
	\end{equation}
	where we note that $\mu < 1$.
	
	Inequality~\eqref{eq:diff-ineq} can be rewritten as
	$$
	\frac{\d}{\d t}
	\left( I(t) e^{-\omega M t} \right)
	\le
	\left( \varepsilon \D(u)(t) - \mu \omega M^2 \right)
	e^{-\omega M t }
	$$
	and after integration with respect to time, we obtain
		\begin{equation*}
		I(T) e^{-\omega M T}
		- I(0) 
		\le
		\varepsilon
		\int_0^T
		\D(u)(t) e^{-\omega M t}
		\, \d t 
		-
		\mu M
		\left(1 - e^{-\omega M T} \right)
	\end{equation*}
	for each $T > 0$. Omitting the term $I(T) e^{-\omega M T}$, which is non-negative, we end up with the inequality
	\begin{equation}\label{eq:intD}
	\varepsilon
	\int_0^T
	\D(u)(t) e^{-\omega M t}
	\, \d t
	\ge
	\mu M
	\left(1 - e^{-\omega M T} \right)
	- I(0).
\end{equation}

Assumption~\eqref{eq:u0-concentration} tells us that $I(0) < \mu M$. Therefore, for each $T > T_0$, where 
\begin{equation}\label{eq:def-T0}
T_0 = \frac{1}{\omega M}
\log \left(\frac{\mu M}{\mu M - I(0)} \right) > 0,
\end{equation}
we can define the number
\begin{equation}\label{eq:def-L}
L = \mu M
\left(1 - e^{-\omega M T} \right)
- I(0)
> 0,
\end{equation}
which completes the proof of inequality~\eqref{eq:estim-D}.
\end{proof}

%%%%%%%%%%%%%%%%%%%%%%%%%%%%%%%%%%%%%%%%%%%%%%%%%%%%%%%%%%%%%

\section{Proof of the main results}\label{sec:proof}

In this last section, we prove the main results of this work stated in Section~\ref{sec:mr} by  using the concentration property of the quantity $\D(u_\varepsilon)$ established in Lemma~\ref{lem:conc} combined with the $L^p$-estimates from Lemmas \ref{lem:estim-global-p}--\ref{lem:estim-global-p-d1-k12}.
However, first, we estimate $\D(u_\varepsilon)$ from above.

\begin{lem}\label{lem:Destim1}
Let the assumptions of Theorem \ref{thm:main} hold true. Let 
$u_\varepsilon=u_\varepsilon(t,x)$ be a radial solution to problem \eqref{eq:main} with  $\varepsilon>0$.
Moreover, assume that
\begin{equation}\label{kd:ass}
k \in (0,2) \text{ and } d \ge 2, \quad \text{or} \quad k \in (0,1) \text{ and } d = 1.
\end{equation}
For every $T > 0$ and every $\nu>0$, there exist numbers $\varepsilon^* > 0$, $C_1=C_1(d,K)>0$,
$C_2 = C_2(M,T,\nu)>0$, and $\gamma>0$ 
such that for all $\varepsilon \in (0, \varepsilon^*)$, the quantity $\D(u_\varepsilon)$ defined in \eqref{eq:def-D:n} satisfies 
\begin{equation}\label{eq:eps-D-d2}
\varepsilon
\int_0^T
\D(u_\varepsilon)(t)
\, \d t
\le
C_1 M T \nu^{-1}
+
C_2 \left(
\int_0^T \int_{B_{(\nu \varepsilon)^{1/k}}}
u_\varepsilon(t,x)
\, \d x \, \d t
\right)^\gamma.
\end{equation}
\end{lem}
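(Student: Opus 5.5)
Since $u_\varepsilon(t)\in W^{2,p}$ is radial, we move the Laplacian onto the weight and write
\[
\D(u_\varepsilon)(t)=\int_{\R^d}\Delta\big(\varphi(|x|^{2-k})\big)\,u_\varepsilon(t,x)\,\d x .
\]
For $|x|\le 1/2$ the weight is $|x|^{2-k}$, so there $\Delta|x|^{2-k}=(2-k)(d-k)|x|^{-k}$. For $|x|\ge 1/2$ the weight is $C^2$ with bounded derivatives, since $\varphi$ is $C^2$ and constant for large arguments, so $|\Delta\varphi(|x|^{2-k})|\le C(d,k,\varphi)$ there. The condition \eqref{kd:ass} is what makes this distributional identity legitimate. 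If $d\ge2$, then $|x|^{-k}$ is locally integrable and no delta mass appears at the origin. If $d=1$ and $k<1$, then $(|x|^{2-k})''=(2-k)(1-k)|x|^{-k}$ holds in $\mathcal D'$ because $|x|^{1-k}$ has no jump at $0$. The case $d=1$, $k\ge1$ is excluded because there the second derivative carries a singular or negative part.

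Next we split the integral at the radius $\rho=(\nu\varepsilon)^{1/k}$. On $\R^d\setminus B_\rho$ we have $|\Delta\varphi(|x|^{2-k})|\le C(|x|^{-k}+1)\le C((\nu\varepsilon)^{-1}+1)$. Hence, by mass conservation,
\[
\varepsilon\int_0^T\!\!\int_{|x|\ge\rho}|\Delta\varphi|\,u_\varepsilon\le C_1MT\nu^{-1}+CMT\varepsilon .
\]
The term $CMT\varepsilon$ is absorbed into $C_1MT\nu^{-1}$ once $\varepsilon<\varepsilon^*\le\nu^{-1}$. It remains to control the inner ball,
\[
\varepsilon\int_0^T\!\!\int_{B_\rho}(2-k)(d-k)|x|^{-k}u_\varepsilon\,\d x\,\d t ,
\]
by a power of $\int_0^T\!\!\int_{B_\rho}u_\varepsilon$. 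We do this with H\"older's inequality in space and time. We pick $q\in(1,\infty)$ with $qk<d$ (for $d=1$ this is possible because $k<1$) and put $q'=q/(q-1)$. Then
\[
\int_0^T\!\!\int_{B_\rho}|x|^{-k}u_\varepsilon
\le\Big(\int_0^T\!\!\int_{B_\rho}|x|^{-kq}u_\varepsilon\Big)^{1/q}\Big(\int_0^T\!\!\int_{B_\rho}u_\varepsilon\Big)^{1/q'} .
\]
The first factor is bounded by $T\,\sup_t\|u_\varepsilon(t)\|_\infty\int_{B_\rho}|x|^{-kq}\,\d x$. Here $\int_{B_\rho}|x|^{-kq}\,\d x\le C\rho^{d-kq}$. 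For the $L^\infty$ bound we avoid $\|u_\varepsilon\|_\infty$ directly and instead use a further H\"older step with $\|u_\varepsilon\|_p$ for a large finite $p$, taken from Lemmas~\ref{lem:estim-global-p} and~\ref{lem:estim-global-pk12}, which give $\|u_\varepsilon(t)\|_p\le C(M,u_0)\varepsilon^{-\eta/k}$ for small $\varepsilon$. Concretely, this means bounding $\int_{B_\rho}|x|^{-kq}u_\varepsilon$ by $\||x|^{-kq}\|_{L^{p'}(B_\rho)}\|u_\varepsilon\|_p$, which requires $kqp'<d$.

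The choice of $\gamma=1/q'$ is then made by counting powers of $\varepsilon$. The prefactor $\varepsilon$, the factor $\rho^{(d-kqp')/p'}$ raised to $1/q$, and the factor $\varepsilon^{-\eta/(kq)}$ combine into the total power
\[
1+\frac{1}{q}\Big(\frac{d}{kp'}-q-\frac{\eta}{k}\Big)=1-1+\frac1{kq}\Big(d-\frac dp-\eta\Big)=0 ,
\]
using $d/p'=d(p-1)/p=\eta$. So the $\varepsilon$-powers cancel exactly, as dictated by scaling. The constant $C_2$ therefore depends only on $M,T,\nu$ and the $L^p$ data, and $\gamma=1-1/q>0$. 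The main obstacle I expect is this bookkeeping. One must choose $p$ and $q$ compatibly with $kqp'<d$ and with the admissible range of the $L^p$ lemmas, and one must make sure that the $\|u_0\|_p$ alternative in the max is dominated by $\varepsilon^{-\eta/k}$, which fixes $\varepsilon^*$. A secondary point is the justification of the integration by parts at the origin for radial $W^{2,p}$ functions.
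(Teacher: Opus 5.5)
Your proof is correct and follows the paper's route. You reduce $\D(u_\varepsilon)$ to the weight $(2-k)(d-k)|x|^{-k}$; the paper does this by radial integration by parts, where $k<d$ kills the boundary term at the origin. You then split at radius $(\nu\varepsilon)^{1/k}$, using mass conservation on the outer region and a three-factor H\"older inequality with the $L^p$ bounds of Lemmas~\ref{lem:estim-global-p}--\ref{lem:estim-global-pk12} on the inner ball, where the $\varepsilon$-powers cancel by scaling; your $(q,p)$ parametrization is just cleaner bookkeeping for the paper's $(\alpha_i,\beta_i)$. The only cosmetic difference is that you bound the $\varphi''$ contribution in absolute value, so your $C_1$ also depends on $\varphi$; dropping that term via $\varphi''\le 0$ and $u_\varepsilon\ge 0$, as the paper does, gives exactly $C_1=(2-k)(d-k)$.
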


\begin{proof} In the following calculations, we write $u = u_\varepsilon(t,x)$ to simplify the notation.
Moreover, by a slight abuse of notation, we write $u(t,x) = u(t,r)$ with $r = |x|$ to emphasize the radial symmetry.

	Integrating by parts in the definition of $\D(u_\varepsilon)$, using the positivity and radial symmetry of $u$, and properties of the function $\varphi$,
	we obtain
	\begin{equation}
		\begin{split}
			\int_{\R^d} \varphi(|x|^{2-k}) \Delta u(t,x) \, \d x
			= & - (2-k) \int_{\R^d} \varphi'
			( |x|^{2-k} ) \frac{x}{|x|^k} \cdot \nabla u(t,x) \, \d x \\
			= & - \sigma_d (2-k) \int_0^{+\infty} \varphi'
			( r^{2-k} ) r^{d-k} u_r(t,r) \, \d r \\
			= & \int_0^{+\infty}
			\Big(
			\sigma_d (2-k)^2 \varphi''
			( r^{2-k} ) r^{d+1-2k}
			\\
			& 
			+ \sigma_d (2-k)(d-k) \varphi'
			( r^{2-k} ) r^{d-k-1} \Big)
			u(t,r) \, \d r \\	
			& \quad
			- \sigma_d (2-k)
			\Big[
			\varphi'
			( r^{2-k} ) r^{d-k} u(t,r) \Big]_{r=0}^{r=+\infty}.
		\end{split}
	\end{equation}
Since $\varphi'' \le 0$ by property~\eqref{eq:phi-prop3}, the integral involving $\varphi''$ on the right-hand side is non-positive. Moreover, the boundary term vanishes,
because $\varphi'$ vanishes at infinity.
Hence
\begin{equation}\label{eq:D-parts}
		\begin{split}
			\int_{\R^d} \varphi(|x|^{2-k}) \Delta u(t,x) \, \d x
    			\le & \sigma_d (2-k)(d-k) \int_0^{3/2} u(t,r) r^{d-k-1} \, \d r \\
			= & (2-k)(d-k) \int_{B_{3/2}} \frac{u(t,x)}{|x|^k}\, \d x.
		\end{split}
	\end{equation}
    
We estimate the integral on the right-hand side of inequality~\eqref{eq:D-parts}.
First,  for  any $T > 0$ and $\nu>0$, applying the H{\"o}lder inequality twice, we get
\begin{equation*}
	\begin{split}
		\int_0^T \int_{B_{(\nu \varepsilon)^{1/k}}} &
		\frac{u(t,x)}{|x|^k} 
		\, \d x \, \d t \\
		\le &
		\left(
		\int_0^T \int_{B_{(\nu \varepsilon)^{1/k}}}
		|x|^{-k \alpha_1}
		\, \d x \, \d t
		\right)^\frac{1}{\alpha_1}
		\left(
		\int_0^T \int_{B_{(\nu \varepsilon)^{1/k}}}
		u(t,x)^{\beta_1}
		\, \d x \, \d t
		\right)^\frac{1}{\beta_1}
		\\
		\le &
		\left(
		\frac{\sigma_d T}{d-k \alpha_1}
		(\nu \varepsilon)^{d/k - \alpha_1}
		\right)^\frac{1}{\alpha_1} 
		\left(
		\int_0^T \int_{B_{(\nu \varepsilon)^{1/k}}}
		u(t,x)^{
			\left(
			\beta_1 - 1/\beta_2
			\right)
			\alpha_2
		}
		\, \d x \, \d t
		\right)^\frac{1}{\beta_1 \alpha_2} \\
		& \times
		\left(
		\int_0^T \int_{B_{(\nu \varepsilon)^{1/k}}}
		u(t,x)
		\, \d x \, \d t
		\right)^\frac{1}{\beta_1 \beta_2}
		,
	\end{split}
\end{equation*}
for any exponents $\alpha_1, \beta_1, \alpha_2, \beta_2 \ge 1$ satisfying $k \alpha_1 < d$ and $1/\alpha_i + 1/\beta_i = 1$ for $i \in \{1,2\}$.

We set $p = (\beta_1 - 1/\beta_2) \alpha_2$ (observe that $p>1$)
and apply the $L^p$-estimates from Lemma~\ref{lem:estim-global-p} or Lemma~\ref{lem:estim-global-pk12}, depending on whether  $k \in (0,1]$ or $k\in (1,2)$.
%\retka{Since we do not have $p \ge 2$, we need either to change the lemmas or to interpolate simply by H{\"o}lder here (I prefer the second option).}
In both cases, we choose $\varepsilon > 0$ sufficiently small so that the
dominant contribution in the corresponding estimate is given by the term
$CM^{1+\eta/k}\varepsilon^{-\eta/k }$.
%\retka{Make it move to before \eqref{epsilon:ass1}?}
Thus,
\begin{equation*}
	\begin{split}
		\int_0^T \int_{B_{(\nu \varepsilon)^{1/k}}}
		\frac{u(t,x)}{|x|^k} 
		\, \d x \, \d t
		\le &
		\left(
		\frac{\sigma_d T}{d-k \alpha_1}
		(\nu \varepsilon)^{d/k - \alpha_1}
		\right)^\frac{1}{\alpha_1} 
		T^\frac{1}{\beta_1 \alpha_2}
		\left(
		\sup_{t \in [0,T]}
		\|u(t)\|_{p}
		\right)^{1 - \frac{1}{\beta_1 \beta_2}} \\
		& \times
		\left(
		\int_0^T \int_{B_{(\nu \varepsilon)^{1/k}}}
		u(t,x)
		\, \d x \, \d t
		\right)^\frac{1}{\beta_1 \beta_2} \\
		\le &
		C
		T^{\frac{1}{\alpha_1} + \frac{1}{\beta_1 \alpha_2}}
		\nu^{\frac{d}{k \alpha_1} - 1}
		\varepsilon^{\frac{d}{k \alpha_1} - 1 - \frac{\eta}{k} + \frac{\eta}{k \beta_1 \beta_2}} \\
		& \times
		\left(
		\int_0^T \int_{B_{(\nu \varepsilon)^{1/k}}}
		u(t,x)
		\, \d x \, \d t
		\right)^\frac{1}{\beta_1 \beta_2},
	\end{split}
\end{equation*}
where the number $C > 0$ is independent of $T$, $\nu$, and $\varepsilon$.

We recall that $\eta = d(p-1)/p$. Thus, the exponent of $\varepsilon$ equals
$$
-1+\frac{d}{k}
\left(
\frac{1}{\alpha_1}+\frac{p-1}{p}
    \left( \frac{1}{\beta_1 \beta_2}-1 \right) \right).
$$
Using the relations for $\alpha_i$ and  $\beta_i$, along with the definition of $p$, one can verify that this exponent is exactly $-1$. Therefore
\begin{equation}\label{eq:uxk1}
	\int_0^T \int_{B_{(\nu \varepsilon)^{1/k}}}
	\frac{u(t,x)}{|x|^k} 
	\, \d x \, \d t
	\le
	C
	T^\alpha
	\nu^\beta
	\varepsilon^{-1}
	\left(
	\int_0^T \int_{B_{(\nu \varepsilon)^{1/k}}}
	u(t,x)
	\, \d x \, \d t
	\right)^\gamma
\end{equation}
for some strictly positive numbers $\alpha, \beta$, and $\gamma$.

On the other hand, for arbitrary $\nu > 0$ and sufficiently small $\varepsilon > 0$ satisfying 
\begin{equation}\label{epsilon:ass1}
(\nu \varepsilon)^{1/k} < 3/2, 
\end{equation}
using the conservation of mass~\eqref{mass}, we estimate
\begin{equation}\label{eq:uxk2}
\begin{split}
	\int_0^T
	\int_{B_{3/2}
		\setminus
		B_{(\nu \varepsilon)^{1/k}}
	}
	\frac{u(t,x)}{|x|^k} 
	\, \d x \, \d t
    & \le
	\frac{1}{\nu \varepsilon}
	\int_0^T
	\int_{B_{3/2}
		\setminus
		B_{(\nu \varepsilon)^{1/k}}
	}
	u(t,x)
	\, \d x \, \d t \\
	& \le \frac{MT}{\nu \varepsilon}.
    \end{split}
\end{equation}
Combining estimates~\eqref{eq:D-parts}, \eqref{eq:uxk1} and \eqref{eq:uxk2}, we obtain inequality~\eqref{eq:eps-D-d2}
with 
$C_1=(2-k)(d-k)>0$.
\end{proof}

The remaining case, not covered by assumptions \eqref{kd:ass}, is considered in the following lemma.

\begin{lem}\label{lem:Destim2}
Let $k\in [1,2)$ and  $d=1$.
Let the assumptions of Theorem \ref{thm:main} hold true and let
$u_\varepsilon=u_\varepsilon(t,x)$ be a radial solution to problem \eqref{eq:main} with  $\varepsilon>0$.
Then, for every $p>1/(2-k)$, there exist numbers $C(M)>0$ and $\varepsilon^*>0$ such that for all $\varepsilon\in (0, \varepsilon^*)$ and all $T>0$ and all $\nu>0$,
\begin{equation}\label{D:est:d1:k12}
  \nu  \varepsilon \int_0^T \D (u_\varepsilon)(t)\,\d t \le  (2-k) \int_0^T\int_{-(\nu \varepsilon)^{1/k}}^{(\nu \varepsilon)^{1/k}} u_\varepsilon(t,x) \, \d x\, \d t
        + C(M) \nu^{\frac{1}{k}\left(2-\frac1p\right)}T.
\end{equation}

% For every $T > 0$ and every $\nu>0$, there exist numbers $\varepsilon^* > 0$, $C_1$, $C_2 = C_2(M,T,\nu)$ and $\gamma>0$ 
% such that for all $\varepsilon \in (0, \varepsilon^*)$

\end{lem}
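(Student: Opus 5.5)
The idea is to work directly with the one-dimensional structure. Split the integral defining $\D(u_\varepsilon)$ at the scale $\delta = (\nu\varepsilon)^{1/k}$, which is exactly the scale appearing in \eqref{D:est:d1:k12}. Outside $[-\delta,\delta]$ we use concavity of the weight. Inside, we control the singular factor $|x|^{1-k}$ by the $\varepsilon$-dependent bound on $\|\partial_x u_\varepsilon\|_p$ from Lemma~\ref{lem:estim-global-p-d1-k12}. The argument of Lemma~\ref{lem:Destim1} breaks down here: for $d=1$, $k\ge 1$ the factor $d-k$ is nonpositive and the boundary term at $r=0$ behaves like $r^{1-k}u$, so it does not vanish. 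This forces the localized treatment.

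\textbf{Step 1: splitting and integration by parts.} Write $u=u_\varepsilon$ and assume $\varepsilon$ is small enough that $\delta<1/2$, so that $\varphi(|x|^{2-k})=|x|^{2-k}$ near $|x|\le\delta$. By evenness of $u$,
\[
\D(u)=2\int_0^\delta x^{2-k}u_{xx}\,\d x+2\int_\delta^\infty \varphi(x^{2-k})u_{xx}\,\d x .
\]
On $(\delta,\infty)$, integrate by parts twice. The function $x\mapsto\varphi(x^{2-k})$ is concave on $(0,\infty)$, being a concave nondecreasing function of the concave function $x^{2-k}$. Hence the resulting bulk term $\int_\delta^\infty(\varphi(x^{2-k}))''u\,\d x$ is $\le 0$ because $u\ge0$. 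The boundary terms at $x=\delta$ are $-2\delta^{2-k}u_x(\delta)+2(2-k)\delta^{1-k}u(\delta)$. On $(0,\delta)$, integrate by parts once. The boundary term $2\delta^{2-k}u_x(\delta)$ cancels the one above, and the boundary term at $0$ vanishes since $x^{2-k}\to0$ and $u_x$ is bounded. This leaves
\[
\D(u)\le 2(2-k)\int_0^\delta x^{1-k}|u_x|\,\d x+2(2-k)\delta^{1-k}u(\delta).
\]

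\textbf{Step 2: estimating the two terms.} For the first term, Hölder's inequality gives a bound by $C\,\delta^{2-k-1/p}\|u_x\|_p$. The integral $\int_0^\delta x^{(1-k)p'}\,\d x$ is finite exactly when $p>1/(2-k)$, which explains the hypothesis. For the point value, for every $x\in(0,\delta)$ we have $u(\delta)\le u(x)+\int_0^\delta|u_x|$. Averaging over $x$ yields
\[
\delta u(\delta)\le\int_0^\delta u\,\d x+\delta^{2-1/p}\|u_x\|_p .
\]
Now multiply by $\nu\varepsilon=\delta^k$. Using evenness, the main term becomes $2(2-k)\int_0^\delta u=(2-k)\int_{-\delta}^{\delta}u$. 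Both error terms are of the form $C\,\delta^{2-1/p}\|u_x\|_p$.

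\textbf{Step 3: closing the estimate.} Apply Lemma~\ref{lem:estim-global-p-d1-k12}. For $\varepsilon<\varepsilon^*$ its maximum is attained by $CM^{1+(1+\eta)/k}\varepsilon^{-(1+\eta)/k}$ with $1+\eta=2-1/p$. Then $\delta^{2-1/p}\|u_x\|_p\le C(M)\,\nu^{(2-1/p)/k}$, since the powers of $\varepsilon$ cancel exactly. Integrating in $t\in(0,T)$ gives \eqref{D:est:d1:k12}.

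I expect the main obstacle to be justifying the use of Lemma~\ref{lem:estim-global-p-d1-k12}, which requires $u_0\in W^{1,1}\cap W^{1,\infty}$ rather than just $L^1\cap L^\infty$. My plan is to use parabolic smoothing, restarting from a small positive time where $u_\varepsilon$ is regular. Alternatively, one could first prove the estimate for regularized data and pass to the limit, provided the constant $C(M)$ is kept independent of the initial gradient once $\varepsilon$ is small. This issue, together with the boundary behaviour at $x=0$ and at infinity in the integrations by parts (handled via the regularity \eqref{regularity}), needs care. The algebra in Steps 1 to 3 itself is routine.
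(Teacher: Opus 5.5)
Your argument is correct and is essentially the paper's proof. The paper integrates by parts twice, letting an inner cutoff tend to $0$, and works with $u_\varepsilon(t,0)\gamma^{1-k}-(k-1)\int_0^\gamma \frac{u_\varepsilon(t,x)-u_\varepsilon(t,0)}{x^k}\,\d x$ at $\gamma=(\nu\varepsilon)^{1/k}$. This is exactly your $\delta^{1-k}u_\varepsilon(t,\delta)-\int_0^\delta x^{1-k}\partial_x u_\varepsilon(t,x)\,\d x$ rewritten by one integration by parts. The paper discards the same nonpositive terms that you group into $\int_\delta^\infty (\varphi(x^{2-k}))''u_\varepsilon\,\d x$, and it closes with the same H\"older bounds and Lemma~\ref{lem:estim-global-p-d1-k12}.

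Note that this lemma requires $p\ge 2$, which the paper states inside its proof but you did not. The regularity of $u_0$ that you flag is not addressed in the paper either; it applies the lemma directly. So your proof is complete to the same extent as the paper's.
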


\begin{proof}
Once again, we write $u = u_\varepsilon(t,x)$ to simplify the notation.
First, we integrate by parts in the definition of $\D(u)(t)$:
\begin{equation}\label{eq:parts1}
\int_{\R} \varphi(|x|^{2-k}) u_{xx}(t,x) \, \d x
			= - 2 (2-k) \int_{0}^{3/2} x^{1-k} \varphi'
			( x^{2-k} ) u_x(t,x) \, \d x.
\end{equation}
Now, for arbitrary parameters $0<\delta<\gamma <1/2$, integrating by parts once more and using the properties of the function $\varphi$ in~\eqref{eq:phi-prop}-\eqref{eq:phi-prop3}, we obtain 
\begin{equation}\label{eq:parts2}
    \begin{split}
    \int_{\delta}^{3/2} x^{1-k} \varphi'
			( x^{2-k} ) u_x(t,x) \, \d x = &
            -\delta^{1-k} u(t,\delta) 
            + (k-1) \int_{\delta}^{\gamma} x^{-k} u(t,x) \, \d x \\
            & + (k-1) \int_{\gamma}^{3/2} x^{-k} \varphi'
			( x^{2-k} ) u(t,x) \, \d x \\
            & - (2-k) \int_{\delta}^{3/2} x^{2-2k} \varphi''
			( x^{2-k} ) u(t,x) \, \d x.
    \end{split}
\end{equation}
Since
\begin{equation}%\label{eq:parts3}
    %\begin{split}
        \int_{\delta}^{\gamma} x^{-k} u(t,x) \, \d x
        %& = \int_{\delta}^{1/2} \frac{u(t,x) - u(t,\delta)}{x^k} \, \d x + \int_{\delta}^{1/2} x^{-k} u(t,\delta) \, \d x \\ &
        = \int_{\delta}^{\gamma} \frac{u(t,x) - u(t,\delta)}{x^k} \, \d x - \frac{u(t,\delta)}{k-1} \left(\gamma^{1-k} - \delta^{1-k} \right),
    %\end{split}
\end{equation}
substituting this identity into previous calculations~\eqref{eq:parts2} and passing to the limit $\delta \to 0$, we obtain
\begin{equation*}%\label{eq:parts4}
    \begin{split}
        \int_{\R} \varphi(|x|^{2-k}) u_{xx}(t,x) \, \d x
        			= & 2 (2-k) \left( u(t,0) \gamma^{1-k} - (k-1) \int_{0}^{\gamma} \frac{u(t,x) - u(t,0)}{x^k} \, \d x \right. \\
                    & - (k-1) \int_{\gamma}^{3/2} x^{-k} \varphi'
			( x^{2-k} ) u(t,x) \, \d x \\
            & \left. + (2-k) \int_{0}^{3/2} x^{2-2k} \varphi''
			( x^{2-k} ) u(t,x) \, \d x \right).
    \end{split}
\end{equation*}
The last two terms on the right-hand side are non-positive because $k \in (1,2)$ and because of the properties~\eqref{eq:phi-prop}-\eqref{eq:phi-prop3} of the function $\varphi$. Therefore
\begin{equation*}
\nu \varepsilon\int_{\R} \varphi(|x|^{2-k}) u_{xx}(t,x) \, \d x
        			\le 2 (2-k) \left( u(t,0) \nu \varepsilon \gamma^{1-k} - (k-1) \nu \varepsilon \int_{0}^{\gamma} \frac{u(t,x) - u(t,0)}{x^k} \, \d x \right).
\end{equation*}

Now, we choose $\gamma$ such that
\begin{equation}\label{gamma}
\nu \varepsilon \gamma^{1-k}= (\nu \varepsilon)^{1/k} \quad\text{which implies} \quad\gamma=(\nu\varepsilon)^{1/k}.
\end{equation}
Observe that
$$
\int_{-(\nu \varepsilon)^{1/k}}^{(\nu \varepsilon)^{1/k}} u(t,x) \, \d x
= 2 (\nu \varepsilon)^{1/k} u(t,0)
+ \int_{-(\nu \varepsilon)^{1/k}}^{(\nu \varepsilon)^{1/k}} \int_0^x u_y(t,y) \, \d y \, \d x,
$$
where, by H{\"o}lder's inequality and Lemma \ref{lem:estim-global-p-d1-k12} (with $p\geq 2$ and with sufficiently small $\varepsilon>0$):
%\retka{We also need to impose $p \ge 2$}
\begin{equation}
    \begin{split}
    \int_{-(\nu \varepsilon)^{1/k}}^{(\nu \varepsilon)^{1/k}} \int_0^x u_y(t,y) \, \d y \, \d x
    &\le \int_{-(\nu \varepsilon)^{1/k}}^{(\nu \varepsilon)^{1/k}} |x|^{1-\frac1p} \|u_x(t)\|_p \, \d x \\
    &\le  C(M) \varepsilon^{-\frac1k\left(2-\frac1p\right)} (\nu \varepsilon)^{\frac{1}{k}\left(2-\frac1p\right)} 
    =C(M) \nu^{\frac{1}{k}\left(2-\frac1p\right)}.
    \end{split}
\end{equation}

Next, we estimate
\begin{equation}
  \begin{split}  
|u(t,x) - u(t,0)| &=
    \left|
    \int_0^1 \frac{\d}{\d s} u(t,sx) \, \d s
    \right|
    =
    \left|
    \int_0^1 x  u_x(t,sx)  \, \d s
    \right| \\
&\le |x| \left( \int_0^1 |u_x(t,sx)|^p \, \d s \right)^{1/p} \le  |x|^{1-1/p} \| u_x(t) \|_p.
\end{split}
\end{equation}
Thus, for every $p>1/(2-k)$, by Lemma \ref{lem:estim-global-p-d1-k12} and by the definition of $\gamma$ in \eqref{gamma}:
\begin{equation}
    \begin{split}
\nu\varepsilon\int_0^{\gamma}
\frac{|u(t,x) - u(t,0)|}{x^k} \, \d x 
&\le \nu \varepsilon \| u_x(t) \|_p \int_0^{\gamma} x^{1-\frac1p-k} \, \d x \\
&\le C \nu\varepsilon\varepsilon^{-\frac1k\left(2-\frac1p\right)} \gamma^{2-\frac1p-k}= C\nu^{\frac1k\left(2-\frac1p\right)} .
    \end{split}
\end{equation}
Combining the above estimates and choosing $\gamma$ as in \eqref{gamma}, we have 
\begin{equation}
    \begin{split}
        \nu\varepsilon\D(u)(t) &\le 
        2 (2-k) \left( u(t,0) \nu\varepsilon\gamma^{1-k} - (k-1) \nu\varepsilon\int_{0}^{\gamma} \frac{u(t,x) - u(t,0)}{x^k} \, \d x \right)\\
        & \le (2-k) \int_{-(\nu \varepsilon)^{1/k}}^{(\nu \varepsilon)^{1/k}} u(t,x) \, \d x
        + C(M) \nu^{\frac{1}{k}\left(2-\frac1p\right)}.
    \end{split}
\end{equation} 
The proof is completed by integrating both sides of this inequality with respect to $t\in [0,T]$. 
\end{proof}

%%%%%%%%%%%%%%%%%%%%%%%%%%    

We are in a position to prove the main results of this work.

\begin{proof}[Proof of Theorem \ref{thm:main}.]
\textit{The case 
$k \in (0,2) \text{ and } d \ge 2$, 
or
$k \in (0,1) \text{ and } d = 1$.
}
Combining 
 estimate \eqref{eq:eps-D-d2} from Lemma~\ref{lem:Destim1}
with 
 inequality  \eqref{eq:estim-D}
 from the concentration Lemma~\ref{lem:conc} 
we obtain
\begin{equation}\label{eq:uxk3}
L-C_1MT\nu^{-1} 
	\le
	C_2
	\left(
	\int_0^{T} \int_{B_{(\nu \varepsilon)^{1/k}}}
	u(t,x)
	\, \d x \, \d t
	\right)^\gamma.
\end{equation}
We choose
$
\nu > {M T C_1}/L,
$
which ensures that the left-hand side  of inequality~\eqref{eq:uxk3} is positive. 
We have completed  the proof of inequality~\eqref{main_estimate}
for such $\nu$
and for all sufficiently small $\varepsilon>0$, namely, those satisfying inequality~\eqref{epsilon:ass1} and such that 
the term $CM^{1+\eta/k}\varepsilon^{-\eta/k }$ dominates the remaining terms in the estimates of the $L^p$-norms of solutions.

{\it The case $k\in [1,2)$ for $d=1$.} 
Here, the proof is analogous: it suffices to apply estimate~\eqref{D:est:d1:k12} to
inequality~\eqref{eq:estim-D}
 from the concentration Lemma~\ref{lem:conc}.
Note that here we need $\nu$ to be small since $(2-1/p)/k>1$.
\end{proof} 

\begin{proof}[Proof of Corollary~\ref{cor:lp-lower}]
	For $p\in \left[1,\infty \right)$, by Theorem~\ref{thm:main} and the H{\"o}lder inequality, we obtain
	\begin{align*}
		C &  \le \int_0^{T} \int_{B_{(\nu\varepsilon)^{1/k}}} u(t,x) \, \d x\, \d t \\ 
		&  \le \left((\nu\varepsilon)^{d/k} \frac{\pi^{d/2}}{\Gamma(d/2+1)} \right)^{\frac{p-1}{p}}
        \int_0^{T} \left( \int_{B_{(\nu\varepsilon)^{1/k}}} u(t,x)^p\, \d x \right)^{1/p} \, \d t.
	\end{align*} 
 The estimates for $p=\infty$ are similar.
\end{proof}

 %    solution $u_{a,b}$ with $t = 0$ and we use the dependence of the initial datum on numbers $a,b \ge 1$ to obtain
	% $$
	% I_{a,b}'(0) \le a b^{k-1} \D(u)(0)
	% - a^2 b^{-2} \mu \omega  M^2
	% + a^2 b^{k-4} \omega  M I(0),
	% $$
	% where we use quantities $\D$, $M$ and $I$ after rescaling.
	
	% Imposing that the right hand side of this inequality is negative, we obtain the following condition,
	% $$
	% b^{k+1} |\D(u)(0)| + a \omega M \left( b^{k-2} I(0) - \mu M \right) < 0,
	% $$
	% which is satisfied for sufficiently large $b \ge 1$ and $a \ge 1$.
	% Consequently, by the continuity of the functions $u_{a,b}$ and $I_{a,b}$ in time, condition $I_{a,b}'(t) < 0$ is satisfied for every $t \in [0,T]$ with some $T = T(a,b) > 0$.

%%%%%%%%%%%%%%%%%%%%%%%%%%%%%%%%%%%%%%%%%%%%%%%%%%%%%%%%%%%%%

\appendix

\section{Estimates of the integration kernel}\label{appendix}

\subsection{Homogeneous kernel}

We first describe the properties of the homogeneous integration kernel
\begin{equation}\label{eq:def-Phi0}
	\Phi_0(x,y) \equiv
	\left(
	\frac{x}{|x|^k} -
	\frac{y}{|y|^k}
	\right)
    \cdot
	\frac{x - y}{|x - y|^{2 - k}},
\end{equation}
with the parameter \( k \in (0,2) \), defined for all $ (x,y) \in \R^d \times \R^d \setminus Y $, where
$$
	Y \equiv \left\{ (x,y) \in \R^d \times \R^d : x = 0 \ \vee\ y = 0 \ \vee\ x = y \right\}.
$$
We also consider its "symmetrized" version
\begin{equation}\label{eq:def-Psi0}
	\Psi_{0}(x,y) \equiv \Phi_{0}(x,y) + \Phi_{0}(x,-y),
\end{equation}
defined analogously for all \( (x,y) \in \R^d \times \R^d \setminus Z \)
% with $d \ge 1$
, where
$$
	Z \equiv \left\{ (x,y) \in \R^d \times \R^d : x = 0 \ \vee\ y = 0 \ \vee\ x = y \ \vee\ x = -y \right\}.
$$

Before proving all the properties, we recall without proof the well-known inequalities used throughout this appendix.

\begin{lem}\label{lem:est-dk-upp}
	Let $d \ge 1$, $a,b \in \R^d$ and $\alpha \ge 0$.
    The following inequality holds:
	$$ %\begin{equation}\label{eq:est-upp}
    |a \pm b|^\alpha
    \le C_\alpha \left( |a|^\alpha + |b|^\alpha \right),
    $$ %\end{equation}
	where $C_\alpha = 1$ if $\alpha \in [0,1]$
    and $C_\alpha = 2^{\alpha-1}$ if $\alpha > 1$.
\end{lem}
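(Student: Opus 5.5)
We must prove Lemma \ref{lem:est-dk-upp}: $|a\pm b|^\alpha \le C_\alpha(|a|^\alpha+|b|^\alpha)$, with $C_\alpha=1$ for $\alpha\in[0,1]$ and $C_\alpha=2^{\alpha-1}$ for $\alpha>1$. The plan is first to reduce to scalars via the triangle inequality: $|a\pm b|\le |a|+|b|$. Since $s\mapsto s^\alpha$ is nondecreasing on $[0,\infty)$ for $\alpha\ge 0$, we get $|a\pm b|^\alpha\le (|a|+|b|)^\alpha$. It therefore suffices to show, for $s,t\ge 0$,
\begin{equation*}
(s+t)^\alpha \le C_\alpha\,(s^\alpha+t^\alpha).
\end{equation*}

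\emph{Case $\alpha\in[0,1]$.} If $\alpha=0$, the left side is $1$ (with the convention $0^0=1$) and the right side is $2$, so the inequality holds. For $\alpha\in(0,1]$ and $s+t>0$, set $\lambda=s/(s+t)\in[0,1]$. The claim becomes $1\le \lambda^\alpha+(1-\lambda)^\alpha$. This follows from $\lambda^\alpha\ge\lambda$ and $(1-\lambda)^\alpha\ge 1-\lambda$ on $[0,1]$, which hold because $\alpha\le 1$. This is subadditivity of a concave function vanishing at $0$.

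\emph{Case $\alpha>1$.} Here $s\mapsto s^\alpha$ is convex. Jensen's inequality at the midpoint gives
\begin{equation*}
\Bigl(\frac{s+t}{2}\Bigr)^\alpha\le \frac{s^\alpha+t^\alpha}{2}.
\end{equation*}
Multiplying by $2^\alpha$ yields $(s+t)^\alpha\le 2^{\alpha-1}(s^\alpha+t^\alpha)$.

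There is no real obstacle here. The only points needing care are the degenerate cases $s+t=0$ and $\alpha=0$, and the monotonicity step that passes from vectors to norms.
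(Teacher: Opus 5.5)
Your proof is correct. You reduce to scalars via $|a\pm b|\le|a|+|b|$ and monotonicity, use subadditivity of $t\mapsto t^\alpha$ for $\alpha\in[0,1]$, and use midpoint convexity for $\alpha>1$; this yields exactly the stated constants. The paper gives no proof of this lemma, calling it well known, and only remarks that for $\alpha\in(0,1)$ it is the subadditivity of $t\mapsto t^\alpha$, which is the route you take.
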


\begin{lem}\label{lem:est-1k-low}
	Let $a,b \in \R$ and $\alpha \ge 0$.
    For $ab \ge 0$, the following inequality holds:
	$$
    |a + b|^\alpha \ge c_\alpha \left( |a|^\alpha + |b|^\alpha \right),
	$$
	where $c_\alpha = 2^{\alpha-1}$ if $\alpha \in [0,1)$ and $c_\alpha = 1$ if $\alpha \ge 1$.
\end{lem}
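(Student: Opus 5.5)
The plan is to reduce to nonnegative reals and then use elementary convexity or concavity of $s\mapsto s^\alpha$ on $[0,\infty)$, treating the two ranges of $\alpha$ separately.

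\emph{Reduction.} Since $ab\ge 0$, the numbers $a$ and $b$ have the same sign, or one of them vanishes. Hence $|a+b| = |a|+|b|$. Writing $s=|a|\ge0$ and $t=|b|\ge 0$, it suffices to prove
\[
(s+t)^\alpha \ge c_\alpha\,(s^\alpha+t^\alpha) \quad\text{for all } s,t\ge 0.
\]
If $s+t=0$, both sides vanish; this holds for $\alpha>0$, and for $\alpha=0$ under the usual convention $0^0=1$, where both sides equal $1$. So by homogeneity of degree $\alpha$ we may normalize $s+t=1$ and write $s=\lambda\in[0,1]$, $t=1-\lambda$.

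\emph{Case $\alpha\ge 1$.} With $s+t=1$ we need $\lambda^\alpha+(1-\lambda)^\alpha\le 1$. This follows from $\lambda^\alpha\le\lambda$ and $(1-\lambda)^\alpha\le 1-\lambda$ for $\lambda\in[0,1]$ and $\alpha\ge1$. Hence $c_\alpha=1$ works.

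\emph{Case $\alpha\in[0,1)$.} The function $\lambda\mapsto\lambda^\alpha$ is concave on $[0,1]$. Jensen's inequality with weights $1/2$ gives
\[
\tfrac12\bigl(\lambda^\alpha+(1-\lambda)^\alpha\bigr)\le \bigl(\tfrac12\bigr)^\alpha ,
\]
that is, $\lambda^\alpha+(1-\lambda)^\alpha\le 2^{1-\alpha}$. Equivalently, $(s+t)^\alpha\ge 2^{\alpha-1}(s^\alpha+t^\alpha)$, which is the claim with $c_\alpha=2^{\alpha-1}$. For $\alpha=0$ this reads $1\ge \tfrac12\cdot 2$, which is trivially true.

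There is no real obstacle here. The only points needing care are the reduction $|a+b|=|a|+|b|$, which uses $ab\ge0$, and the degenerate conventions when $\alpha=0$ or $a=b=0$.
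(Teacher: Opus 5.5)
Your proof is correct: the reduction $|a+b|=|a|+|b|$ from $ab\ge 0$, the normalization $s+t=1$ by homogeneity, the bound $\lambda^\alpha\le\lambda$ on $[0,1]$ for $\alpha\ge 1$, and midpoint Jensen for the concave map $\lambda\mapsto\lambda^\alpha$ when $\alpha\in[0,1)$ all check out, including the degenerate cases. The paper states this lemma without proof and only remarks that it can be seen as a generalization of Jensen's inequality, which is the same mechanism your $\alpha\in[0,1)$ case uses.
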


In the case $\alpha \in (0,1)$, one can consider a function $t \mapsto t^\alpha$. Then Lemma~\ref{lem:est-dk-upp} is the subadditivity property of this function, from which one can conclude that
\begin{equation}\label{eq:subadd}
    a^\alpha - b^\alpha \le (a-b)^\alpha
\end{equation}
for $a \ge b \ge 0$.
Also, Lemma~\ref{lem:est-1k-low} can be seen as a generalization of Jensen's inequality.

Now, we begin with some preliminary estimates.

\begin{lem} \label{lem:Holder-2k}
Let $d \ge 1$ and $\beta \in [0,1]$. Then
\begin{equation}\label{eq:Holder-k}
\left|\frac{x}{|x|^\beta} - \frac{y}{|y|^\beta}\right| \le 2^\beta |x-y|^{1-\beta}
\end{equation}
for all $x, y \in \R^d$ such that $x \neq 0$ and $y \neq 0$.
\end{lem}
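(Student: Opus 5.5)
We need to prove: for $\beta\in[0,1]$ and nonzero $x,y$,
\[
\Bigl|\frac{x}{|x|^\beta}-\frac{y}{|y|^\beta}\Bigr|\le 2^\beta|x-y|^{1-\beta}.
\]
The endpoint cases are immediate. For $\beta=0$ the two sides coincide. For $\beta=1$ the left-hand side is the distance between two unit vectors, which is at most $2$.

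For $\beta\in(0,1)$ the plan is a two-regime interpolation argument.

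\emph{Step 1: normalize.} Assume without loss of generality that $|x|\ge|y|>0$, and set $F(z)=z|z|^{-\beta}$.

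\emph{Step 2: a uniform bound.} Since $|F(x)|=|x|^{1-\beta}$ and $|F(y)|=|y|^{1-\beta}$, the triangle inequality gives
\[
|F(x)-F(y)|\le |x|^{1-\beta}+|y|^{1-\beta}\le 2|x|^{1-\beta}.
\]

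\emph{Step 3: a Lipschitz-type bound.} Split
\[
F(x)-F(y)=\frac{x-y}{|x|^\beta}+y\Bigl(\frac{1}{|x|^\beta}-\frac{1}{|y|^\beta}\Bigr).
\]
The second term has norm $|y|^{1-\beta}|x|^{-\beta}\bigl(|x|^\beta-|y|^\beta\bigr)$. Because $|y|\le|x|$, we have $|y|^{1-\beta}\le|y|^{1-\beta}|x|^{\beta}|y|^{-\beta}\cdot|y|^{\beta}|x|^{-\beta}$, and the cleaner route is the elementary inequality
\[
|x|^\beta-|y|^\beta\le |y|^{\beta-1}\bigl(|x|-|y|\bigr)\quad\text{for }|x|\ge|y|,
\]
which holds by concavity of $t\mapsto t^\beta$ (its derivative at $|y|$ bounds the increment). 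This bounds the second term by
\[
|x|^{-\beta}\bigl(|x|-|y|\bigr)\le |x|^{-\beta}|x-y|.
\]
Adding the first term gives
\[
|F(x)-F(y)|\le 2|x-y|\,|x|^{-\beta}.
\]

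\emph{Step 4: interpolate.} Combine the two bounds by taking a geometric mean with weights $\beta$ and $1-\beta$:
\[
|F(x)-F(y)|\le \bigl(2|x|^{1-\beta}\bigr)^{\beta}\bigl(2|x-y|\,|x|^{-\beta}\bigr)^{1-\beta}=2\,|x-y|^{1-\beta}.
\]
The powers of $|x|$ cancel exactly. This yields the constant $2$, not $2^\beta$.

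\emph{Step 5: sharpen the constant.} To reach $2^\beta$, consider two regimes. If $|x-y|\ge|x|$, the uniform bound gives $2|x|^{1-\beta}\le 2|x-y|^{1-\beta}$. This is again too weak, so I would refine both bounds.

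This refinement is the main obstacle. The first thing to try is better estimates in each regime.
\begin{itemize}
\item For the first term in Step 3, use the exact identity $|F(x)-F(y)|^2=|x|^{2-2\beta}+|y|^{2-2\beta}-2|x|^{-\beta}|y|^{-\beta}\,x\cdot y$ to get a sharper uniform bound.
\item In the near regime, use the radial/angular decomposition: the angular part contributes $|y|^{1-\beta}$ times the chord between the directions $x/|x|$ and $y/|y|$, and the radial part contributes $\bigl||x|^{1-\beta}-|y|^{1-\beta}\bigr|\le \bigl||x|-|y|\bigr|^{1-\beta}$ by subadditivity \eqref{eq:subadd}.
\end{itemize}
Squaring reduces everything to a two-dimensional computation with the angle $\theta$ and the ratio $s=|y|/|x|\in(0,1]$. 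The target inequality is then
\[
\bigl(1+s^{2-2\beta}-2s^{1-\beta}\cos\theta\bigr)\le 4^{\beta}\bigl(1+s^2-2s\cos\theta\bigr)^{1-\beta},
\]
which I would verify by treating it as a function that is monotone in $\cos\theta$ and checking the extreme values $\cos\theta=\pm1$ using Lemmas~\ref{lem:est-dk-upp} and~\ref{lem:est-1k-low}.
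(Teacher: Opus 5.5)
Your Steps 1--4 are correct but, as you note, only give the constant $2$. The proof therefore rests entirely on the last paragraph, and there the reduction to $\cos\theta=\pm1$ is justified by a claim that is false.

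Write $c=\cos\theta\in[-1,1]$ and $s=|y|/|x|\in(0,1]$. The difference
\[
G(c)=4^{\beta}\bigl(1+s^2-2sc\bigr)^{1-\beta}-\bigl(1+s^{2-2\beta}-2s^{1-\beta}c\bigr)
\]
is not monotone in $c$. For $s=1$ it equals $4^\beta t^{1-\beta}-t$ with $t=2-2c\in[0,4]$, which vanishes at both endpoints and is strictly positive in between. Each side is separately decreasing in $c$, but that gives nothing. If you meant the ratio of the two sides, its monotonicity would need a separate, not entirely trivial, argument that you do not supply.

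The idea you need is concavity. The left-hand side is affine in $c$, while $c\mapsto 4^\beta(1+s^2-2sc)^{1-\beta}$ is concave on $[-1,1]$: it is the concave map $t\mapsto t^{1-\beta}$ composed with a nonnegative affine function. Hence $G$ is concave and attains its minimum over $[-1,1]$ at $c=\pm1$. The endpoint checks are then the ones you anticipate:
\begin{itemize}
\item for $c=1$ you need $1-s^{1-\beta}\le 2^\beta(1-s)^{1-\beta}$, which follows from \eqref{eq:subadd} even without the factor $2^\beta$;
\item for $c=-1$ you need $1+s^{1-\beta}\le 2^\beta(1+s)^{1-\beta}$, which is Lemma~\ref{lem:est-1k-low} with $a=1$, $b=s$, $\alpha=1-\beta$.
\end{itemize}
With this repair your argument is a complete proof. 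Steps 2--4 and the two bullet points play no role and can be dropped.

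Your repaired argument is also a genuinely different and shorter route than the paper's. The paper handles $d=1$ with the same two scalar inequalities. For $d\ge2$ it splits according to the sign of $x\cdot y$:
\begin{itemize}
\item for $x\cdot y<0$ it combines the Cauchy--Schwarz inequality with two applications of Lemma~\ref{lem:est-1k-low};
\item for $x\cdot y\ge0$ it writes $|x-y|^2$ as a convex combination of $(|x|-|y|)^2$ and $|x|^2+|y|^2$ and applies Jensen's inequality, which amounts to using concavity in $c$ only on $[0,1]$.
\end{itemize}
Using concavity on all of $[-1,1]$ reduces the multidimensional inequality directly to the one-dimensional (collinear) case. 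This removes the case distinction and the Cauchy--Schwarz step, and it shows that the collinear configurations are the extremal ones, with equality at $y=-x$. The paper's split does give the slightly better constant $2^{\beta/2}$ when $x\cdot y\ge0$, but that improvement is never used.
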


\begin{proof}
    The cases $\beta = 0$ and $\beta = 1$ are obvious. From now on, we assume that $\beta \in (0,1)$.

    Let $d = 1$. Note that the left-hand side of inequality~\eqref{eq:Holder-k} is symmetric with respect to the lines $y = x$ and $y = -x$. First, we consider the case $xy > 0$. From the symmetry, we assume without loss of generality that $x \ge y > 0$. Then the assertion follows from inequality~\eqref{eq:subadd} with $a = x$, $b = y$, $\alpha = 1 - \beta$ and the fact that $2^\beta > 1$.
    In the case $xy < 0$, once again by the symmetry of the left-hand side, we assume $x > 0 > y$. Denoting $z := -y$, inequality~\eqref{eq:Holder-k} becomes Lemma~\ref{lem:est-1k-low} with $a = x$, $b = z$ and $\alpha = 1 - \beta$.    
    
    Now, let $d \ge 2$. We consider the squared form of the left-hand side of inequality~\eqref{eq:Holder-k},
    $$
    \left|\frac{x}{|x|^\beta} - \frac{y}{|y|^\beta}\right|^2
    = |x|^{2-2\beta} + |y|^{2-2\beta} - 2 |x|^{-\beta} |y|^{-\beta} x \cdot y.
    $$
    In the first case, we take $x \cdot y < 0$. Using Lemma~\ref{lem:est-1k-low} with $a = |x|^2$, $b = |y|^2$ and $\alpha = 1 - \beta$, we have
    $$
    \left|\frac{x}{|x|^\beta} - \frac{y}{|y|^\beta}\right|^2
    \le
    2^\beta \left( |x|^2 + |y|^2 \right)^{1-\beta} - 2 |x|^{-\beta} |y|^{-\beta} x \cdot y.
    $$
    We denote $z := -y$. Then $x \cdot z > 0$ and using the Cauchy-Schwarz inequality,
    %$x \cdot z \le |x||z|$
    we get
    $$
    \left|\frac{x}{|x|^\beta} + \frac{z}{|z|^\beta}\right|^2
    \le
    2^\beta \left( |x|^2 + |z|^2 \right)^{1-\beta}
    + 2 (x \cdot z)^{1-\beta}.
    $$
    By factoring out the number $2^\beta$,
    we apply Lemma~\ref{lem:est-1k-low} with
    $a = |x|^2 + |z|^2$, $b = 2 x \cdot z$ and $\alpha = 1 - \beta$ to obtain
    $$
    \left|\frac{x}{|x|^\beta} + \frac{z}{|z|^\beta}\right|^2
    \le
    2^{2 \beta} \left( |x|^2 + |z|^2 + 2 x \cdot z \right)^{1-\beta}
    = 2^{2 \beta} \left| x+z \right|^{2-2\beta},
    $$
    which is inequality~\eqref{eq:Holder-k}.

We are left with the case $x \cdot y \ge 0$. We write $x \cdot y = s |x||y|$ with $s = \cos \theta \in [0,1]$, where~$\theta$ is the angle between $x$ and $y$. 
Observe the identity
\begin{equation*}
    |x-y|^2 = s ( |x| - |y| )^2 + (1-s)(|x|^2 + |y|^2),
\end{equation*}
which expresses $|x-y|^2$ as a convex combination of $( |x| - |y| )^2$ and $|x|^2 + |y|^2$. 
Applying Jensen's inequality to the concave function $t \mapsto t^{1-\beta}$ yields
\begin{equation*}
    |x-y|^{2-2\beta} =
    \left( |x-y|^2 \right)^{1-\beta} \ge
    s \left| |x| - |y| \right|^{2-2\beta}
    + (1-s) \left( |x|^2 + |y|^2 \right)^{1-\beta}.
\end{equation*}
First, note that
$$
\left| |x| - |y| \right|^{2-2\beta} = 
\left( |x|^2 + |y|^2 - 2|x||y| \right)^{1-\beta}.
$$
Therefore, applying inequality~\eqref{eq:subadd} with $a = |x|^2 + |y|^2$, $b = 2|x||y|$, and $\alpha = 1-\beta$ yields
$$
\left| |x| - |y| \right|^{2-2\beta} \ge 
\left( |x|^2 + |y|^2 \right)^{1-\beta} - 2^{1-\beta} |x|^{1-\beta} |y|^{1-\beta}.
$$
Moreover, we estimate $\left( |x|^2 + |y|^2 \right)^{1-\beta}$ from below using Lemma~\ref{lem:est-1k-low} with $a = |x|^2$, $b = |y|^2$, and $\alpha = 1 - \beta$. Combining all these estimates, we obtain
\begin{equation}
    \begin{split}
        |x-y|^{2-2\beta} & \ge (|x|^2 + |y|^2)^{1-\beta} - 2^{1-\beta} |x|^{1-\beta} |y|^{1-\beta} s\\
        & \ge 2^{-\beta} |x|^{2- 2\beta} + 2^{-\beta} |y|^{2- 2\beta} - 2^{1-\beta} |x|^{1-\beta} |y|^{1-\beta} s.
    \end{split}
\end{equation}
Multiplying the last inequality by $2^\beta$, we obtain
$$
|x|^{2-2\beta} + |y|^{2-2\beta} - 2 |x|^{1-\beta} |y|^{1-\beta} s \le 2^\beta |x-y|^{2-2\beta},
    $$
which implies inequality~\eqref{eq:Holder-k}, since $2^{\beta} \le 2^{2 \beta}$. 
\end{proof}

\begin{lem}\label{lem:0A}
Let $d \ge 1$ and $k \in (0,1]$.
Then the function $\Phi_0$, defined by formula \eqref{eq:def-Phi0}, satisfies estimate
\begin{equation}\label{eq:Fi0-estim}
0 \le
\Phi_0(x,y)\le 2^k
\end{equation}
for all $ (x,y) \in \R^d \times \R^d \setminus Y $.
\end{lem}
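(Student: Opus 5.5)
The statement has two halves, handled separately. Set $a=\frac{x}{|x|^k}-\frac{y}{|y|^k}$ and $b=x-y$, so that $\Phi_0(x,y)=a\cdot b\,|b|^{k-2}$. The upper bound will come from Cauchy--Schwarz combined with Lemma~\ref{lem:Holder-2k}. The lower bound will come from monotonicity of the map $F(z)=z|z|^{-k}$.

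\textbf{Upper bound.} Since $k\in(0,1]$, Lemma~\ref{lem:Holder-2k} applies with $\beta=k$ and gives
\[
|a|\le 2^k|x-y|^{1-k}.
\]
By Cauchy--Schwarz,
\[
\Phi_0(x,y)\le |a|\,|x-y|\,|x-y|^{k-2}\le 2^k|x-y|^{1-k}|x-y|^{k-1}=2^k .
\]
This holds for every $(x,y)\notin Y$, since then $x,y\neq0$ and $x\neq y$, so all quantities are finite.

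\textbf{Lower bound.} Since $|x-y|^{k-2}>0$, it suffices to prove the monotonicity inequality
\[
\Big(\frac{x}{|x|^k}-\frac{y}{|y|^k}\Big)\cdot(x-y)\ge0 .
\]
Expanding the product gives
\[
|x|^{2-k}+|y|^{2-k}-x\cdot y\,\big(|x|^{-k}+|y|^{-k}\big).
\]
If $x\cdot y\le0$ this is clearly nonnegative. Otherwise bound $x\cdot y\le|x||y|$ and write $r=|x|$, $s=|y|$. The expression is then at least
\[
r^{2-k}+s^{2-k}-r^{1-k}s-rs^{1-k}=\big(r^{1-k}-s^{1-k}\big)(r-s).
\]
This is $\ge0$ because $t\mapsto t^{1-k}$ is nondecreasing for $k\le1$, so the two factors share the same sign.

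An alternative route for the lower bound is to note that $F=\nabla G$ with $G(z)=|z|^{2-k}/(2-k)$, which is convex, so $F$ is a monotone operator. I prefer the direct computation above because it is elementary.

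The only point requiring care is this sign check. It is exactly where the hypothesis $k\le1$ is used, and it is why the lemma is stated only for that range; for $k\in(1,2)$ one needs the $\varphi'$ cutoff and Assumption~\ref{ass:h} instead.
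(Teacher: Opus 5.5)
Your proof is correct and matches the paper's argument: the lower bound uses $x\cdot y\le|x||y|$ to reduce to $(|x|^{1-k}-|y|^{1-k})(|x|-|y|)\ge0$, and the upper bound combines Cauchy--Schwarz with Lemma~\ref{lem:Holder-2k} for $\beta=k$, which also covers $k=1$, where the paper simply notes the bound $2$. Two small remarks: the case split on the sign of $x\cdot y$ is unnecessary, since $|x|^{-k}+|y|^{-k}>0$. Also, $k\le1$ is used in the upper bound as well, because Lemma~\ref{lem:Holder-2k} requires $\beta\in[0,1]$.
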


\begin{proof}
By a direct calculation, using the inequality $x \cdot y \le |x||y|$,  we obtain
\begin{equation}\label{ineq:A:lemma}
\begin{split}
\left(\frac{x}{|x|^{k}}-\frac{y}{|y|^{k}}\right)\cdot(x-y)
& = |x|^{2-k}-\frac{x\cdot y}{|x|^{k}}-\frac{x\cdot y}{|y|^{k}}+|y|^{2-k} \\
& \ge |x|^{2-k}-|x|^{1-k}|y|-|x||y|^{1-k}+|y|^{2-k} \\
& = (|x|^{1-k}-|y|^{1-k})(|x|-|y|).
\end{split}
\end{equation}
Since $k \in (0,1]$, the function $t \mapsto t^{1-k}$ is non-decreasing for $t \ge 0$. Therefore, the terms $|x|^{1-k}-|y|^{1-k}$ and $|x|-|y|$ always have the same sign.
Since the denominator $|x-y|^{2-k}$ is strictly positive for $(x,y) \notin Y$, the lower bound $\Phi_0(x,y) \ge 0$ follows.

For $k=1$,
the function $\Phi_0$ is obviously bounded from above by 2.
To find the upper bound   for $k\in (0,1)$, it suffices to apply Lemma \ref{lem:Holder-2k} with $\beta = k$.
\end{proof}

\begin{rem}
When $k \in (1,2)$, the function $\Phi_0$ no longer satisfies estimate~\eqref{eq:Fi0-estim}.
We~therefore consider its "symmetrized" version $\Psi_0$, defined by formula~\eqref{eq:def-Psi0}, and in the following lemma we establish its lower bound for all $k \in (0,2)$.
% We note that the function $\Psi_0$ is also bounded from above on $\R^d\times \R^d \setminus Z$  by
% $\max\{2,2^k\}$ if $d = 1$
% and
% $2^{k/2 + 1}$ if $d \ge 2$.
% %, which can be proved similarly to {\red Lemma~\ref{lem:1A}, below}.
% However, in this work, we only require its lower bound.
\end{rem}

\begin{lem}\label{lem:1A}
Let $d \ge 1$ and $k \in (0,2)$. Then the function $\Psi_0$, defined by formula~\eqref{eq:def-Psi0}, satisfies the estimate
$$
\Psi_{0}(x,y) \ge c_k\quad 
$$
for all
$
(x,y) \in \mathbb{R}^d \times \mathbb{R}^d \setminus Z
$,
where $c_k = \min \{ 2^k, 2 \}$.
\end{lem}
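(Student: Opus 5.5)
The plan is to reduce $\Psi_0$ to an explicit function of three scalar quantities and then minimise it. Throughout, write $F(z)=z/|z|^k$, $a=|x|$, $b=|y|$ and $x\cdot y=s\,ab$ with $s\in(-1,1)$ (the values $s=\pm1$ are allowed only in $d=1$, or when $x$ and $y$ are parallel with $a\neq b$). Since $\Psi_0$ is unchanged under $y\mapsto -y$, I may assume $s\ge 0$. Set
\[
A=a^{2-k}+b^{2-k},\qquad B=a^{1-k}b+ab^{1-k},\qquad \alpha=\tfrac{2-k}{2}\in(0,1).
\]
A direct expansion, as in \eqref{ineq:A:lemma}, gives
\[
\Psi_0(x,y)=\frac{A-sB}{(a^2+b^2-2sab)^{\alpha}}+\frac{A+sB}{(a^2+b^2+2sab)^{\alpha}}=:G(a,b,s).
\]
Both terms are homogeneous of degree $0$ in $(a,b)$. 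So I normalise $a=1$, $b=t>0$, and by the symmetry $t\mapsto 1/t$ I may take $t\in(0,1]$. The claim becomes the two-variable inequality $G(1,t,s)\ge c_k$ for $t\in(0,1]$ and $s\in[0,1]$, with $t=1,s=1$ excluded.

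\textbf{Step 1: the endpoint $s=0$.} Here $G=2A/(a^2+b^2)^\alpha$. Lemma~\ref{lem:est-dk-upp} with exponent $\alpha\le 1$ gives $(a^2+b^2)^\alpha\le a^{2\alpha}+b^{2\alpha}=A$, so $G\ge 2\ge c_k$.

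\textbf{Step 2: the endpoint $s=1$.} This is the one-dimensional case, and $G$ simplifies to
\[
G=\frac{(a^{1-k}-b^{1-k})(a-b)}{|a-b|^{2-k}}+\frac{a^{1-k}+b^{1-k}}{(a+b)^{1-k}} .
\]
For $k\le1$ the first term is nonnegative, exactly as in Lemma~\ref{lem:0A}. For $k>1$ it is negative, but it is comparable to $|a-b|^{k-1}$ times a quotient that tends to $0$ as $t\to0$ and stays controlled as $t\to1$. So I will treat it together with the second term, using Lemma~\ref{lem:est-1k-low} and the subadditivity \eqref{eq:subadd}, and minimise the resulting one-variable function of $t$. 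I expect the minimum to be attained at $t\to1$ (value $2^k$) or at $t\to0$ (value $2$ for $k<1$, which explains $c_k=\min\{2^k,2\}$).

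\textbf{Step 3: interior $s\in(0,1)$.} I would show that $s\mapsto G(1,t,s)$ attains its minimum over $[0,1]$ at an endpoint, for instance by showing that $G$ is even in $s$ with $\partial_s G$ of a fixed sign on $(0,1)$. Writing $P=1+t^2$ and $Q=2t$, the function is $h(s)+h(-s)$ with $h(s)=(A-sB)(P-sQ)^{-\alpha}$. I would compute $h''$ and check its sign, using the relations between $A,B,P,Q$: namely $B\le A$, $Q\le P$, and the comparison $B/A$ versus $\alpha Q/P$ coming from Lemma~\ref{lem:est-1k-low}. Convexity of $h+h(-\cdot)$ in $s$ would force the minimum to $s=0$, which is already covered by Step~1.

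If this convexity fails, I would instead bound $G$ below directly. For $k\le1$, the first term is nonnegative by Lemma~\ref{lem:0A}, and the second term is at least $\ge (A+sB)/(a+b)^{2-k}$, which can be estimated via Lemma~\ref{lem:est-1k-low}.

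\textbf{Main obstacle.} The hardest part is the regime $k\in(1,2)$ with $s$ close to $1$ and $t$ close to $1$. There the first summand is negative and blows up in magnitude like $|x-y|^{k-2}\cdot|x-y|^{?}$. One must then show that the cancellation inside $A-sB$, which vanishes to second order at $t=1,s=1$, beats the singular denominator, and that the second summand near $2^k$ compensates. I would handle this by Taylor-expanding $A-sB$ and $a^2+b^2-2sab$ around $(t,s)=(1,1)$. Both are quadratic forms in $(1-t,\,1-s)$, so the first summand behaves like a quotient of quadratic forms raised to the power $1-\alpha=k/2>0$, which tends to $0$. This is the local estimate I would verify first.
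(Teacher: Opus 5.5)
Your reduction of $\Psi_0$ to $G(a,b,s)$ and your Step~1 are correct. However, Steps~2 and~3, which contain the whole difficulty, are only announced (``I expect'', ``I would show''), and the mechanisms you propose do not work as stated.

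For $k<1$, convexity and evenness of $s\mapsto G(1,t,s)$ would put the minimum at $s=0$, where you proved $G\ge 2$. That would give $\Psi_0\ge 2$, which is false: in $d=1$ with $x=1$, $y=t\to1^-$, one has $\Psi_0\to 2^k<2$. So that convexity must fail. Your fallback (drop the first summand and bound the second by $(A+sB)/(a+b)^{2-k}$) only yields $2^{k-1}$; take $s=0$, $a=b$. It loses a factor $2$.

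The missing observation is to keep both summands and give them a common upper bound for the denominators. For $k\le1$ both numerators $A\mp sB$ are nonnegative and they sum to exactly $2A$. Moreover, $(a^2+b^2\mp2sab)^{\alpha}\le(a+b)^{2-k}\le 2^{1-k}A$ by Lemma~\ref{lem:est-dk-upp} with exponent $2-k\ge1$. Hence $G\ge 2A/(2^{1-k}A)=2^k$ at once, with no minimisation over $s$ or $t$. This is exactly the paper's argument.

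For $k\in(1,2)$ you give no argument, and the obstacle you single out is not where the difficulty lies. Near $t=s=1$ the first summand tends to $0$ and $G\to 2^k>2=c_k$, so there is slack there. The value $2$ is approached as $t\to0$. There $B=t+t^{1-k}\to\infty$, and for $s>0$ the two summands behave like $\mp s\,t^{1-k}$ and must cancel. In particular, your Step~2 description of the negative first summand (a factor tending to $0$ as $t\to0$) is wrong: at $s=1$ it equals $(1-t^{1-k})(1-t)^{k-1}\to-\infty$.

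The paper sidesteps all of this with the substitution $z=x+y$, $w=x-y$. This turns $\Psi_0$ with exponent $k$ into $2^{k-1}$ times $\Psi_0$ with exponent $2-k\in(0,1)$, evaluated at $(z,w)$, and maps the complement of $Z$ to itself. The first case then gives $\Psi_0\ge 2^{k-1}\cdot 2^{2-k}=2$.
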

\begin{proof}
We estimate from below the expression
%By direct calculation, we obtain:
%
\begin{equation}\label{Psi0:proof}
%\begin{split}
\Psi_{0}(x,y)
%&= \Phi_{0}(x,y)+\Phi_{0}(x,-y) \\ &
= \left(\frac{x}{|x|^{k}}-\frac{y}{|y|^{k}}\right)\cdot\frac{x-y}{|x-y|^{2-k}} + \left(\frac{x}{|x|^{k}}+\frac{y}{|y|^{k}}\right)\cdot\frac{x+y}{|x+y|^{2-k}}.
%\end{split}
\end{equation}

First, we consider $k\in (0,1]$. 
Following the argument used in estimate~\eqref{ineq:A:lemma} with the inequality $\pm x \cdot y \ge -|x||y|$, we have
$$
\left(\frac{x}{|x|^{k}} \pm \frac{y}{|y|^{k}}\right)\cdot(x \pm y) \ge 0.
$$
Due to the non-negativity of the numerators, one can estimate the denominators using Lemma~\ref{lem:est-dk-upp} with $a = x$, $b = y$ and $\alpha = 2 - k$, to obtain
\begin{equation}\label{eq:Psi0-lower-k01}
\Psi_{0}(x,y) \ge
\frac{2 \left(|x|^{2-k}+|y|^{2-k} \right)}
{2^{1-k} \left(|x|^{2-k}+|y|^{2-k} \right)} = 2^k
\end{equation}
for all $(x,y)\in\R^d\times\R^d \setminus Z$.

%%%%%%%
Next, we deal with the case $k\in (1,2)$. Substituting
\begin{equation}\label{eq:subst-zw}
    z = x + y, \ w = x - y
    \quad
    \iff
    \quad
    x = \frac{z+w}{2}, \ y = \frac{z-w}{2}
\end{equation}
into the definition~\eqref{Psi0:proof} of the function $\Psi_0$, we obtain
$$
2^{k-1}
\left[
    \left(
        \frac{z+w}{|z+w|^{k}}-\frac{z-w}{|z-w|^{k}}
    \right)
    \cdot
    \frac{w}{|w|^{2-k}}
    +
    \left(
        \frac{z+w}{|z+w|^{k}}+\frac{z-w}{|z-w|^{k}}
    \right)
    \cdot
    \frac{z}{|z|^{2-k}}
\right],
$$
which, after rearrangement, can be written as
$$
2^{k-1}
\left[
    \left(
        \frac{z}{|z|^{2-k}}-\frac{w}{|w|^{2-k}}
    \right)
    \cdot
    \frac{z-w}{|z-w|^{k}}
    +
    \left(
        \frac{z}{|z|^{2-k}}+\frac{w}{|w|^{2-k}}
    \right)
    \cdot
    \frac{z+w}{|z+w|^{k}}
\right].
$$
Note that the expression in square brackets corresponds to the definition of the function $\Psi_{0}$ with $k$ replaced by $2-k$. Since $2-k \in (0,1)$, we may use the previously proved lower bound~\eqref{eq:Psi0-lower-k01} to obtain
$$
\Psi_{0}(x, y) \ge 2^{k-1} 2^{2-k} = 2
$$
for all $(x,y)\in\R^d\times\R^d \setminus Z$.
\end{proof}

%%%%%%%%%%%%%%%%

\subsection{General kernel}

We recall the definition~\eqref{eq:def-Phi} of the integration kernel $\Phi$ that appears in the proof of Lemma~\ref{lem:conc}:
\begin{equation}\label{Phi:A}
\Phi(x,y) =
    \left(
        \varphi' ( |x|^{2-k} ) \frac{x}{|x|^k} -
        \varphi' ( |y|^{2-k} ) \frac{y}{|y|^k}
    \right)
    \cdot
    \frac{x-y}{|x-y|^{2-k}}h'(|x-y|^k),
\end{equation}
where $ (x,y) \in \R^d \times \R^d \setminus Y $ and
$$
	Y \equiv \left\{ (x,y) \in \R^d \times \R^d : x = 0 \ \vee\ y = 0 \ \vee\ x = y \right\}.
$$
Here, $k\in (0,2)$, the truncation function $\varphi$ is defined in Remark~\ref{rem:def-varphi} and $h=h(r)$ follows from the kernel definition in Assumption~\ref{ass:h}. We then introduce its "symmetrized" version,
\begin{equation}\label{Psi:A}
\Psi(x,y) \equiv \Phi(x,y) + \Phi(x,-y)
\end{equation}
for $(x,y)\in\R^d\times\R^d\setminus Z$, where
$$
	Z \equiv \left\{ (x,y) \in \R^d \times \R^d : x = 0 \ \vee\ y = 0 \ \vee\ x = y \ \vee\ x = -y \right\}.
$$

We begin the analysis of functions $\Phi$ and $\Psi$ with some comments on the function $h=h(r)$ that appears in Assumption~\ref{ass:h}.

\begin{rem}\label{rem:h}
    Note that under assumption~\eqref{ass:K4},
    $$
    \nabla K(x) = \frac{kx}{|x|^{2-k}}h'(|x|^k)
    $$
    and
    $$
    \Delta K(x) = k^2 |x|^{2k-2} h''(|x|^k) + \frac{k(k+d-2)}{|x|^{2-k}} h'(|x|^k).
    $$
    Combining the first relation with assumption~\eqref{ass:K1}, we obtain
    \begin{equation}\label{ass:K7}
        h'\in L^\infty (0,\infty).
    \end{equation}
    Moreover,  by the second relation and by assumption \eqref{ass:K3} for $k\in (1,2)$, we have
    \begin{equation}\label{ass:K10}
        |\cdot|^k h''\in L^\infty (0,\infty).
    \end{equation}
    We also notice that, for $k \in (1,2)$,
    assumption~\eqref{ass:K6} implies that $h'$ is uniformly continuous near the origin, and thus the following limit exists and is finite:
    \begin{equation}\label{ass:K8}
        \lim_{r\to 0} h'(r)=c_0,
    \end{equation}
    where $c_0\ge c_1>0$ by assumption~\eqref{ass:K5}. 
\end{rem}

In the following two lemmas, we show that the function $\Psi$ defined in~\eqref{Psi:A} is bounded both from below and above.

\begin{lem}\label{lem:Psi:lowerbound}
Let $d \ge 1$ and $k \in (0,2)$.
Consider the function $\Psi$ defined in~\eqref{Psi:A} for all $(x,y)\in\R^d\times\R^d\setminus Z$,
where the functions $h$ and $\varphi$ are given in Assumption~\ref{ass:h} and Remark~\ref{rem:def-varphi}, respectively. 
Then there exist constants $r_1 \in (0,1/2]$ and $\psi_1 > 0$ such that
\begin{equation}\label{Psi:lowerbound}
\Psi(x, y) \ge \psi_1% \quad \text{for all} \quad |x|, |y| \le r_1.
\end{equation}
for all $|x|, |y| \le r_1$.
\end{lem}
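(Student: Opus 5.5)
The plan is to reduce $\Psi$ on a small ball to the homogeneous kernel $\Psi_0$ and then use Lemma~\ref{lem:0A} and Lemma~\ref{lem:1A}.

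\emph{Reduction.} Take $r_1\le 1/2$ small enough that $(2r_1)^k\le r_0$. If $|x|,|y|\le r_1$, then $|x|^{2-k},|y|^{2-k}\le (1/2)^{2-k}$, so by \eqref{eq:phi-prop} we get $\varphi'(|x|^{2-k})=\varphi'(|y|^{2-k})=1$. Hence
\begin{equation*}
\Psi(x,y)=\Phi_0(x,y)\,h'(|x-y|^k)+\Phi_0(x,-y)\,h'(|x+y|^k).
\end{equation*}
Since $|x\pm y|\le 2r_1$, both arguments of $h'$ lie in $(0,r_0]$, so \eqref{ass:K5} gives $h'\ge c_1$ there. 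The bound is needed on $B_{r_1}\times B_{r_1}$ off the null set $Z$.

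\emph{Case $k\in(0,1]$.} By Lemma~\ref{lem:0A}, $\Phi_0(x,\pm y)\ge 0$. Therefore
\begin{equation*}
\Psi(x,y)\ge c_1\Psi_0(x,y)\ge c_1 2^k,
\end{equation*}
using Lemma~\ref{lem:1A}. So we may take $\psi_1=c_1 2^k$, with no further smallness of $r_1$ needed.

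\emph{Case $k\in(1,2)$.} This is the main obstacle. Here $\Phi_0$ is unbounded and changes sign: for $y\to0$ the term $x\cdot y\,|y|^{-k}$ blows up. Only the symmetrized sum $\Psi_0$ is controlled, so the two values of $h'$ must not be separated naively. We write
\begin{equation*}
\Psi(x,y)=h'(|x-y|^k)\,\Psi_0(x,y)+\big(h'(|x+y|^k)-h'(|x-y|^k)\big)\,\Phi_0(x,-y).
\end{equation*}
The first term is at least $2c_1$ by Lemma~\ref{lem:1A}.

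For the second term, assumption \eqref{ass:K6} and the mean value theorem give
\begin{equation*}
\big|h'(|x+y|^k)-h'(|x-y|^k)\big|\le c_2\,\big||x+y|^k-|x-y|^k\big|.
\end{equation*}
By symmetry assume $|y|\le|x|$. Since $k>1$, the mean value theorem again yields
\begin{equation*}
\big||x+y|^k-|x-y|^k\big|\le k\,(2|x|)^{k-1}\,\big||x+y|-|x-y|\big|\le 2k\,(2|x|)^{k-1}|y|.
\end{equation*}
For the factor $\Phi_0(x,-y)$, bound
\begin{equation*}
|\Phi_0(x,-y)|\le\big(|x|^{1-k}+|y|^{1-k}\big)|x+y|^{k-1}\le 2|y|^{1-k}(2|x|)^{k-1}.
\end{equation*}
Multiplying these bounds, the singular factor $|y|^{1-k}$ is compensated by $|y|$. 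The error is therefore at most $C(k)\,c_2\,|y|^{2-k}|x|^{2k-2}\le C(k)\,c_2\,r_1^{k}$.

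Shrinking $r_1$ so that $C(k)c_2r_1^k\le c_1$ gives $\Psi\ge c_1=:\psi_1$ on $B_{r_1}\times B_{r_1}$. The delicate point is choosing this pairing, with $\Psi_0$ carrying one value of $h'$ and the difference of $h'$ multiplying a single $\Phi_0$, so that the blow-up of $\Phi_0$ near $y=0$ is cancelled.
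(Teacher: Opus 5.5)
Your proof is correct. For $k\in(0,1]$ it is the paper's argument. For $k\in(1,2)$ you use a genuinely different decomposition.

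\textbf{The paper's route.} It introduces the limit $c_0=\lim_{r\to0}h'(r)$, whose existence it derives from \eqref{ass:K6} in Remark~\ref{rem:h}. It then bounds $\Psi-c_0\Psi_0$ by splitting it into a part dotted with $x/|x|^k$ and a part dotted with $y/|y|^k$, and splits each part again. This requires three ingredients:
\begin{itemize}
\item the bound $|h'(r)-c_0|\le c_2 r$;
\item the mean value theorem for $|y+x|^k-|y-x|^k$;
\item Lemma~\ref{lem:Holder-2k} with $\beta=2-k$, to control $\frac{y+x}{|y+x|^{2-k}}-\frac{y-x}{|y-x|^{2-k}}$.
\end{itemize}
The outcome is $\psi_1=c_0c_k/2=c_0$.

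\textbf{Your route.} You freeze $h'$ at the variable value $h'(|x-y|^k)\ge c_1$. The main term $h'(|x-y|^k)\Psi_0\ge 2c_1$ then follows at once from Lemma~\ref{lem:1A}. The only error is the single product $\big(h'(|x+y|^k)-h'(|x-y|^k)\big)\Phi_0(x,-y)$.

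Your reduction to $|y|\le|x|$ is legitimate. It holds because $\Phi(y,x)=\Phi(x,y)$ and $\Phi(y,-x)=\Phi(x,-y)$, so $\Psi(x,y)=\Psi(y,x)$. Under that ordering, the $O(|y|)$ mismatch of the two $h'$ values absorbs the $|y|^{1-k}$ blow-up of $\Phi_0(x,-y)$. The error is then at most $4^k k\,c_2\,r_1^k$.

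\textbf{Comparison.} Your version is shorter: it has one error term, and it needs neither $c_0$ nor Lemma~\ref{lem:Holder-2k}. The price is the ordering step and the constant $\psi_1=c_1\le c_0$, which is immaterial. The paper's version is manifestly symmetric in $x$ and $y$ and never orders $|x|$ and $|y|$. It pays for this with more error pieces and the auxiliary H\"older-type lemma.
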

\begin{proof}
Recall from definition~\eqref{eq:phi-prop} that $\varphi'(|x|^{2-k})=1$ for $|x|\le 1/2$.
Thus, for all $|x|,|y| \le 1/2$,
the function $\Psi$ can be written as
\begin{equation}
\begin{split}
\Psi(x,y) = &\left(\frac{x}{|x|^k} - \frac{y}{|y|^k}\right) \cdot \frac{x-y}{|x-y|^{2-k}} h'(|x-y|^k) \\
& \quad + \left(\frac{x}{|x|^k} + \frac{y}{|y|^k}\right) \cdot \frac{x+y}{|x+y|^{2-k}} h'(|x+y|^k).
\end{split}
\end{equation}
For $k\in (0,1]$, using the argument from the proof of Lemma \ref{lem:0A}, we have
$$
\left(\frac{x}{|x|^k} \pm \frac{y}{|y|^k}\right)
\cdot \frac{x\pm y}{|x \pm y|^{2-k}} \ge 0
% \quad\text{and}\quad
% \left(\frac{x}{|x|^k} + \frac{y}{|y|^k}\right) \cdot \frac{x+y}{|x+y|^{2-k}} \ge 0
.
$$
Let $r_1 \in (0,1/2]$ be a number such that $(2r_1)^k\le r_0$, where the constant $r_0 > 0$ is the one in assumption~\eqref{ass:K5}.
Hence, for $|x|, |y|\le r_1$, $|x\pm y|^k\le r_0$ and $h'(|x\pm y|^k)\ge c_1$.
Therefore, applying Lemma \ref{lem:1A} we immediately obtain that since $\Psi_0(x,y) \ge c_1$, we have 
$\Psi(x,y) \ge c_1 c_k \equiv \psi_1$.
\\
Now, let $k \in (1, 2)$. To find a lower bound for $\Psi$, we first estimate the difference $\Psi - c_0 \Psi_0$, where $c_0 > 0$ is the constant from relation~\eqref{ass:K8}.
For $|x|,|y|\le 1/2$, we have
\begin{equation}\label{PsiPsi0}
\begin{split}
\Psi(x,y)& - c_0 \Psi_0(x,y)  \\
= & \left(\frac{x}{|x|^k} - \frac{y}{|y|^k}\right) \cdot \frac{x-y}{|x-y|^{2-k}}
\left(
    h' ( |x-y|^k ) - c_0
\right) \\
& \quad + \left(\frac{x}{|x|^k} + \frac{y}{|y|^k}\right) \cdot \frac{x+y}{|x+y|^{2-k}}
\left(
    h' ( |x+y|^k ) - c_0
\right) \\
= & 
    \frac{x}{|x|^k} \cdot
        \left( 
        \frac{y+x}{|y+x|^{2-k}}
            \left( 
            h' (|y+x|^k) - c_0
            \right) 
        - \frac{y-x}{|y-x|^{2-k}}
            \left( 
            h' (|y-x|^k) - c_0
            \right)
        \right)
    \\
& \quad +
    \frac{y}{|y|^k} \cdot
        \left(
        \frac{x+y}{|x+y|^{2-k}}
            \left( 
            h' (|x+y|^k) - c_0
            \right) 
        - \frac{x-y}{|x-y|^{2-k}}
            \left( 
            h' (|x-y|^k) - c_0
            \right)
        \right)
.
\end{split}
\end{equation}

We consider the first term on the right-hand side of equality~\eqref{PsiPsi0}.
We add and subtract the term
$
(y-x)
(h' (|y+x|^k) - c_0)
/|y-x|^{2-k}
$
and estimate by
\begin{equation}\label{first-term:A}
    \begin{split}
    & |x|^{1-k} \left(
        \left|
            %\left(
            \frac{y+x}{|y+x|^{2-k}} - \frac{y-x}{|y-x|^{2-k}}
        \right| %\right)
        \left| %\left( 
            h' (|y+x|^k) - c_0
            %\right)
        \right| \right. \\
    & \hspace{6cm} + 
        % \left| \frac{y-x}{|y-x|^{2-k}} \right|
        |y-x|^{k-1}
        \left|
        h' (|y+x|^k) - h' (|y-x|^k)
        \right|
        \bigg).
    \end{split}
\end{equation}
By Lemma~\ref{lem:Holder-2k}
with $\beta=2-k\in (0,1)$,
%for $k \in (1, 2)$
we have
\begin{equation}
    \begin{split}
        \left|
            %\left(
            \frac{y+x}{|y+x|^{2-k}} - \frac{y-x}{|y-x|^{2-k}}
        \right| %\right)
        \le 2^{2-k} |2x|^{k-1}= 2|x|^{k-1}.
    \end{split}
\end{equation}
Moreover, by Lemma~\ref{lem:est-dk-upp} with $\alpha = k-1 \in (0,1)$,
$$
|y-x|^{k-1} \le |x|^{k-1}+|y|^{k-1}.
$$

The function $h''$ is bounded on $(0,r_0]$ by assumption~\eqref{ass:K6}. Thus for $r_1 \in (0,1/2]$ such that $(2r_1)^k\le r_0$, and for $|x|, |y| \le r_1$, we obtain
$$
\left| h'(|y+x|^k) - c_0 \right| =
\left| h'(|y+x|^k) - h'(0) \right| \le c_2|y+x|^k \le c_2 2^{k-1} \left( |x|^k+|y|^k \right),
$$
where we used
Lemma~\ref{lem:est-dk-upp} with $\alpha = k \in (1,2)$.
Analogously, 
$$
\left| h'(|y+x|^k) - h'(|y-x|^k) \right| 
    \le c_2 \left| |y+x|^k - |y-x|^k \right|
$$
and by the Mean Value Theorem for the function $t \mapsto |t|^k$, along with
Lemma~\ref{lem:est-dk-upp} with $\alpha = k-1 \in (0,1)$, we have
$$
\left| |y+x|^k - |y-x|^k \right|
\le 2 k|x| \left( |x|^{k-1}+|y|^{k-1} \right).
$$
% \begin{equation}
%     \begin{split}
%     \left| h'(|y+x|^k) - h'(|y-x|^k) \right| 
%     & \le c_2 \left| |y+x|^k - |y-x|^k \right| \\
%     & \le 2 c_2 k|x| \left( |x|+|y| \right)^{k-1}\\
%     & \le 2 c_2 k|x| \left( |x|^{k-1}+|y|^{k-1} \right). 
%     \end{split}
% \end{equation}

Combining all these estimates for $|x|, |y|\le r_1$, the expression in~\eqref{first-term:A} is bounded above by
\begin{equation}
    |x|^{1-k} \left(
        2^k c_2
        |x|^{k-1}
        \left( |x|^k+|y|^k \right)
        +
        2 c_2 k|x|
        \left( |x|^{k-1}+|y|^{k-1} \right)^2
        \right)
        \le
        % \left( 2^{k+1} + 8k \right) c_2 
        C r_1^k,
\end{equation}
with some $C > 0$.
The second term on the right-hand side of equality~\eqref{PsiPsi0} is estimated analogously, leading to the final inequality
$$
\left| \Psi(x,y) - c_0 \Psi_0(x,y) \right|
\le
% \left( 2^{k+2} + 16k \right) c_2
C r_1^k
% \quad\text{for} \quad
% |x|,|y| \le r_1,
$$
for $|x|,|y| \le r_1$,
where $r_1\in (0,1/2]$ is an arbitrary number satisfying $(2r_1)^k\le r_0$.

Next, we choose $r_1 > 0$ sufficiently small such that
$
%\left( 2^{k+2} + 16k \right) c_2 
C r_1^k \le c_0 c_k/2,$
where the constant $c_k>0$ comes from Lemma~\ref{lem:1A}. This choice completes the proof of inequality~\eqref{Psi:lowerbound} in the following way:
\begin{equation}
\Psi(x,y) \ge c_0 \Psi_0(x,y) -|\Psi(x,y)-c_0\Psi_0(x,y)|\ge \frac{c_0 c_k}{2} \equiv \psi_1
\end{equation}
for all $|x|,|y|\le r_1$.
\end{proof}

\begin{lem}\label{lem:Psi:upperbound}
Let $d \ge 1$ and $k\in (0,2)$. 
Consider the function $\Psi$ defined in~\eqref{Psi:A} for all $(x,y)\in\R^d\times\R^d\setminus Z$,
where the functions $h$ and $\varphi$ are given in Assumption~\ref{ass:h} and Remark~\ref{rem:def-varphi}, respectively.
Then there exists a constant $\psi_2 \ge 0$ such that 
$$
|\Psi(x, y)| \le \psi_2
%\quad\text{ for all } \quad (x, y)\in \R^d\times\R^d\setminus Z
.
$$
\end{lem}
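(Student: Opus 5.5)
Since $\Psi(x,y)=\Phi(x,y)+\Phi(x,-y)$, it suffices to bound $|\Phi(x,y)|$ uniformly on $\R^d\times\R^d\setminus Y$ (for $k\in(0,1]$), and to bound $\Psi$ directly for $k\in(1,2)$. Throughout, set $F(x)=\varphi'(|x|^{2-k})\,x/|x|^k$, so that
\[
\Phi(x,y)=\big(F(x)-F(y)\big)\cdot\frac{x-y}{|x-y|^{2-k}}\,h'(|x-y|^k).
\]
By \eqref{ass:K7}, $h'$ is bounded, so $|\Phi(x,y)|\le \|h'\|_\infty |F(x)-F(y)|\,|x-y|^{k-1}$. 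Everything therefore reduces to a H\"older-type estimate $|F(x)-F(y)|\le C|x-y|^{1-k}$ when $k\le1$. The case $k\in(1,2)$ will need the cancellation built into $\Psi$.

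\emph{Case $k\in(0,1]$.} I would show that $F$ is globally $(1-k)$-H\"older. Since $\varphi'(|x|^{2-k})=1$ for $|x|\le1/2$ and $\varphi'=0$ for $|x|\ge3/2$, $F$ equals $x/|x|^k$ near the origin, is smooth on $\{|x|\ge1/4\}$, and vanishes for $|x|\ge 3/2$. Hence $F$ is bounded, Lipschitz on $\{|x|\ge1/4\}$, and $(1-k)$-H\"older on $B_{1/2}$ by Lemma~\ref{lem:Holder-2k} with $\beta=k$. Patching these pieces (considering separately $|x-y|\ge1/8$, where boundedness suffices, and $|x-y|<1/8$, where both points lie in $B_{1/2}$ or both in $\{|x|\ge1/4\}$) gives $|F(x)-F(y)|\le C|x-y|^{1-k}$. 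Then $|\Phi|\le C\|h'\|_\infty$, and $\psi_2=2C\|h'\|_\infty$.

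\emph{Case $k\in(1,2)$.} Now $F$ is unbounded near $0$ ($|F(x)|=|x|^{1-k}$) and $\Phi$ alone need not be bounded, so I would use the symmetrization. The regions are as follows.
\begin{enumerate}
\item If $|x|,|y|\ge1/4$, then $F$ is bounded and Lipschitz, so $|F(x)-F(y)|\le C\min(1,|x-y|)$. This gives $|\Phi|\le C\|h'\|_\infty\min(1,|x-y|)|x-y|^{k-1}$, which is bounded for $|x-y|\le1$. For $|x-y|\ge1$, use $|\nabla K(z)|=k|z|^{k-1}|h'(|z|^k)|\le K_2$ from \eqref{ass:K2}, so $|x-y|^{k-1}|h'|\le K_2/k$.
\item If $|x|,|y|\le r$ for small $r\le1/2$, then $\Psi=c_0\Psi_0+(\Psi-c_0\Psi_0)$. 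The difference is bounded by the estimate \eqref{first-term:A} from the proof of Lemma~\ref{lem:Psi:lowerbound}. The term $\Psi_0$ is bounded by applying the $z,w$ substitution \eqref{eq:subst-zw}, which turns it into $2^{k-1}$ times a $\Psi_0$ with exponent $2-k\in(0,1)$; that in turn is bounded by Lemma~\ref{lem:0A}, giving $\Psi_0\le 2^{k-1}\cdot 2\cdot 2^{2-k}$.
\item In the mixed region, say $|y|\le 1/8$ and $|x|\ge1/4$, the difference $|x\pm y|$ is bounded below. The term $F(x)\cdot(x\pm y)|x\pm y|^{k-2}h'$ is bounded, but $F(y)$ is singular. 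Its contribution to $\Psi$ is
\[
-F(y)\cdot\Big[\frac{x-y}{|x-y|^{2-k}}h'(|x-y|^k)-\frac{x+y}{|x+y|^{2-k}}h'(|x+y|^k)\Big]=-F(y)\cdot\big[\nabla K(x-y)-\nabla K(x+y)\big]/k.
\]
Since $\nabla K$ is $C^1$ away from the origin with $|D^2K(z)|\lesssim |z|^{k-2}$, the bracket is $O(|y|)$. Together with $|F(y)|=|y|^{1-k}$ this gives $O(|y|^{2-k})$, which is bounded.
\end{enumerate}

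The main obstacle is the mixed region in the case $k\in(1,2)$. It needs a bound on the full Hessian of $K$ (equivalently on $h''$ via $|z|^kh''$ bounded, \eqref{ass:K10}), not just on $\Delta K$. For $|z|\ge 1/8$ I would write $D^2K$ explicitly in terms of $h'$ and $h''$ and use \eqref{ass:K7} and \eqref{ass:K10} to get $|D^2K(z)|\le C|z|^{k-2}$. Carrying this through the mean value theorem along the segment from $x-y$ to $x+y$, which stays at distance at least $1/8$ from the origin, is the step I expect to require the most care.
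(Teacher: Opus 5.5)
\paragraph{A gap in the covering for $k\in(1,2)$.} Your estimates inside each region are correct, but the three regions you list do not cover $\R^d\times\R^d\setminus Z$:
\begin{itemize}
\item region (1) requires $\min(|x|,|y|)\ge 1/4$;
\item region (2) requires $\max(|x|,|y|)\le r\le 1/2$;
\item region (3), with its transpose, requires $\min(|x|,|y|)\le 1/8$ and $\max(|x|,|y|)\ge 1/4$.
\end{itemize}
A pair with $|x|=1$, $|y|=1/5$ lies in none of them, whatever $r$ is. Moreover, $r$ is not free. The comparison with $c_0\Psi_0$ that you borrow from the proof of Lemma~\ref{lem:Psi:lowerbound} needs $(2r)^k\le r_0$. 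So for small $r_0$ the whole set $\{r<\max(|x|,|y|)<1/4\}$ is also missed, e.g.\ $|x|=|y|=1/5$ with $x\neq\pm y$.

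The repair is a matter of tying the thresholds to $r$.
\begin{itemize}
\item Run your region-(1) argument on $\{|x|,|y|\ge r/2\}$. Since $F$ is $C^1$ with bounded derivative on $\{|z|\ge r/4\}$, the Lipschitz bound holds there with an $r$-dependent constant.
\item Run your region-(3) argument on $\{|y|<r/2,\ |x|>r\}$. There $|x+ty|\ge r/2$ for $t\in[-1,1]$.
\end{itemize}
Together with region (2) and the symmetry $\Psi(x,y)=\Psi(y,x)$ (which follows from $\Phi(x,y)=\Phi(y,x)$ and the oddness of $F$), this covers everything. Indeed, if $\min(|x|,|y|)<r/2$, say $|y|<r/2$, then either $|x|\le r$ or $|x|>r$.

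\paragraph{The Hessian bound is not an obstacle.} The step you flag as the main difficulty is easy because $K$ is radial. Write $K(z)=g(|z|)$. The eigenvalues of $D^2K(z)$ are $g''(|z|)$ and $g'(|z|)/|z|$, and $g''=\Delta K-(d-1)g'/|z|$. Hence \eqref{ass:K1} and \eqref{ass:K3} give $|D^2K(z)|\le C|z|^{k-2}$ for all $z\neq 0$, with no need to pass through $h''$.

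This scale-invariant bound makes your cancellation argument work whenever $|y|\le |x|/2$, at every scale. The $F(y)$-part is then at most $C(|y|/|x|)^{2-k}$, and the $F(x)$-part at most $K_1|x|^{1-k}(3|x|/2)^{k-1}$.

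In the comparable regime $|x|/2<|y|<2|x|$ no cancellation is needed at all:
\[
\big(|F(x)|+|F(y)|\big)\,|\nabla K(x\pm y)|\le K_1\big(|x|^{1-k}+|y|^{1-k}\big)(|x|+|y|)^{k-1}\le 2\cdot 3^{k-1}K_1 .
\]
So for $k\in(1,2)$ you could drop region (2), and with it \eqref{ass:K6}, entirely.

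\paragraph{Comparison with the paper.} For $k\in(0,1]$, your global $(1-k)$-H\"older bound on $F$ is what the paper extracts region by region on $\Omega_1,\dots,\Omega_4$ via the splitting \eqref{Phi:decomp}. Your version is the same idea, packaged more cleanly.

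For $k\in(1,2)$ the routes differ.
\begin{itemize}
\item The paper applies the substitution $z=x+y$, $w=x-y$ to all of $\Psi$. This gives a function of the $k<1$ type, with exponent $2-k$ and the roles of $\varphi'$ and $h'$ exchanged. The paper then re-runs the first part, only in sketch form, with bounds on $h''$ in place of bounds on $\varphi''$.
\item You use the substitution only for the homogeneous $\Psi_0$, where Lemma~\ref{lem:0A} makes it rigorous. Otherwise you work in the original variables, and the cancellation produced by the symmetrization appears explicitly as $\nabla K(x-y)-\nabla K(x+y)=O(|y|)$.
\end{itemize}
The paper's route is shorter on the page because it exploits the $k\leftrightarrow 2-k$ duality. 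Once the covering is fixed, yours is the more transparent and fully checkable argument.
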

\begin{proof}
For $k=1$,
using property~\eqref{ass:K7} of $h'$
and 
property \eqref{eq:phi-prop2} for $\varphi'$,
all terms in definition~\eqref{Phi:A} of the function~$\Phi$ are bounded (and thus so is $\Psi$).

For $k \in (0,1)$, by definition~\eqref{Psi:A}, it is sufficient to show that the function $\Phi$ is bounded.
First, we estimate $\Phi$ on the set
\begin{equation}\label{set1}
\Omega_1\equiv
\left\{
    (x, y) \in \R^d\times \R^d \setminus Y : \ |x| \le \frac{3}{2} \quad \text{and} \quad |y| \le \frac{3}{2}
\right\}.
\end{equation}
Consider formula~\eqref{Phi:A} for $\Phi$. We add and subtract the term
    $$
    \varphi' ( |x|^{2-k} ) \frac{y}{|y|^k}
    \cdot
    \frac{x-y}{|x-y|^{2-k}} h'(|x-y|^k)
    $$
and rearrange the terms in the following way:
\begin{equation}\label{Phi:decomp}
    \begin{split}
    \Phi(x,y)=&
    \left[
        \varphi'(|x|^{2-k})
        \left(\frac{x}{|x|^{k}}-\frac{y} {|y|^{k}}\right)
    \right.
    \\ & \quad
    +
    \left.
        \frac{y}{|y|^{k}}
        \left(\varphi^{\prime}(|x|^{2-k})
        - \varphi^{\prime}(|y|^{2-k})\right)
    \right]
    \cdot\frac{x-y}{|x-y|^{2-k}} h^{\prime}(|x-y|^{k}).
    \end{split}
\end{equation}
The function $\varphi''$ is globally bounded, which follows from its properties in Remark~\ref{rem:def-varphi}.
Thus
\begin{equation}\label{eq:varphi-diff}
    \begin{split}
    \left|
        \varphi'(|x|^{2-k}) - \varphi'(|y|^{2-k})
    \right|
    & \le \|\varphi''\|_\infty
    \left|
        |x|^{2-k}-|y|^{2-k}
    \right| \\
    & \le (2-k) \left( \frac{3}{2} \right)^{1-k} \|\varphi''\|_\infty |x-y|,
    \end{split}
\end{equation}
as $2-k>1$ and $|x|, |y|\le 3/2$. Now we can estimate each term in formula~\eqref{Phi:decomp}:
\begin{equation}
    \begin{split}
        |\Phi(x,y)| \le &
        \left[
            2^k
            + 
            (2-k) \left( \frac{3}{2} \right)^{1-k} \|\varphi''\|_\infty
            |y|^{1-k}
            |x-y|^k
        \right]
        \|h'\|_\infty,
    \end{split}
\end{equation}
where we used
properties of the function $\varphi$ from Remark~\ref{rem:def-varphi},
Lemma~\ref{lem:Holder-2k} with $\beta = k \in (0,1)$,
estimate~\eqref{eq:varphi-diff} and
property~\eqref{ass:K7} of the function $h$. 
The right-hand side of this estimate is bounded on the set $\Omega_1$. % from definition~\eqref{set1}.

Next, by the properties of the function $\varphi$ in~\eqref{eq:phi-prop3}, we have
$
\Phi(x,y)=0
$
on
$$
\Omega_2 \equiv
\left\{
    (x, y) \in \R^d\times\R^d \setminus Y : \ |x| > \frac{3}{2} \quad \text{and} \quad |y|>\frac{3}{2}
\right\}.
$$

It remains to show that $\Phi$ is bounded on 
$
\R^d\times\R^d
\setminus
( \Omega_1 \cup \Omega_2)
$.
We consider $\Phi$ on the set 
\begin{equation}
\Omega_3 \equiv 
\left\{
    (x, y) \in \R^d\times\R^d \setminus Y: \ |x|>\frac{3}{2} \quad \text{and} \quad |y|\le\frac{3}{2}
\right\},
\end{equation}
where, by \eqref{eq:phi-prop3}, we have
\begin{equation} \label{Phi:onOmega3}
\Phi(x,y) =
    -\varphi' ( |y|^{2-k} ) \frac{y}{|y|^k}
    \cdot
    \frac{x-y}{|x-y|^{2-k}}h'(|x-y|^k).
\end{equation}
Since $k\in (0,1)$ and $\varphi', h'\in L^\infty (0,\infty)$,
we immediately obtain 
$$\sup_{|y|\le 3/2}
|\Phi(x,y)| \to 0
\quad \text{as} \quad
|x| \to +\infty.$$
In order to estimate $\Phi(x,y)$ in a neighborhood of the set 
\begin{equation}\label{set32}
\left\{
(x,y) \in \R^d \times \R^d : \
|x|=\frac{3}{2}\quad  \text{and} \quad |y|=\frac{3}{2}
\right\},
\end{equation}
we use an estimate analogous to~\eqref{eq:varphi-diff}.
% we note that the function $\varphi'(|y|^{2-k})$ is Lipschitz continuous in a neighborhood of the sphere $\{y \in \R^d : \ |z|=3/2\}$. 
Since $|x| > 3/2$ on $\Omega_3$, we have $\varphi'(|x|^{2-k}) = 0$. Thus, in this neighborhood, we obtain
$$
\left| \varphi'(|y|^{2-k}) \right|
= 
\left|\varphi'(|y|^{2-k}) -\varphi'(|x|^{2-k}) \right| \le C |x - y|.
$$

Consequently, the function in~\eqref{Phi:onOmega3} is bounded on the set $\Omega_3$. Analogous estimates hold on the set 
\begin{equation}
\Omega_4 \equiv 
\left\{
    (x, y) \in \R^d\times\R^d  \setminus Y : \
    |x| \le \frac{3}{2}
    \quad \text{and} \quad
    |y| > \frac{3}{2}
\right\},
\end{equation}
which completes the proof that the function~$\Phi$ is bounded on $\R^d\times\R^d \setminus Y $ for $k\in (0,1)$.

For $k \in (1,2)$, the function $|\Phi|$ is no longer bounded due to the singularity induced by the exponent $1 - k < 0$.
Consequently, we consider the function $\Psi$ defined in~\eqref{Psi:A} and apply the substitution~\eqref{eq:subst-zw} by analogy with the proof of Lemma~\ref{lem:1A}.
In these new variables, we write $\Psi(x,y) = \widetilde\Psi(z,w)$, where
\begin{equation}
    \begin{split}
        \widetilde\Psi(z,w) = &
        \ 2^{k-1} 
        \left(
            h'(|z|^k)\frac{z}{|z|^{2-k}} - h'(|w|^k) \frac{w}{|w|^{2-k}}
        \right)
        \cdot
        \frac{z-w}{|z-w|^k} \varphi'
        \left( 2^{k-2}|z-w|^{2-k} \right) \\
        & +
        2^{k-1} 
        \left(
            h'(|z|^k)\frac{z}{|z|^{2-k}} + h'(|w|^k) \frac{w}{|w|^{2-k}}
        \right)
        \cdot
        \frac{z+w}{|z+w|^k} \varphi' 
        \left( 2^{k-2}|z+w|^{2-k} \right).
    \end{split}
\end{equation}
Note that $\widetilde\Psi(z,w)$ has the same structure (up to multiplicative constants) as $\Psi(x,y)$, but with the exponent $k$ replaced by $2-k \in (0,1)$ and the roles of $\varphi'$ and $h'$ interchanged.
Thus, to complete the proof, it suffices to follow the reasoning from the first part, where $\Psi$ is analyzed for $k \in (0,1)$. Moreover, besides using the properties of $\varphi$, we rely on the 
facts that $h' \in L^\infty(0,\infty)$ by property~\eqref{ass:K7}, and 
$h'', |\cdot|^kh'' \in L^\infty(0,\infty)$ for $k \in (1,2)$, which is guaranteed by 
assumption~\eqref{ass:K6} and property~\eqref{ass:K10}.
Below, we sketch the main steps of that analysis.

The function $\tilde{\Psi}$ is bounded on any bounded set following the study of $\Psi$ over $\Omega_1$.
It  vanishes when $|z-w|$ and $|z+w|$ are large enough.
Now consider the case when $z$ and $w$ are large but the distance between them is small enough so that $\varphi'
        \left( 2^{k-2}|z-w|^{2-k} \right)$ does not vanish.
%We use that by \eqref{ass:K10}, $|\cdot|^k h''\in L^\infty (0,\infty)$. CONFUSION OF FIRST AND SECOND DERIVATIVE NEEDS TO GET FIXED
%Namely ($C>0$ denotes constants depending on $h,k$;\ $|z|$ and $|w|$ have the same size up to an additive or multiplicative constant), 
Applying  property \eqref{ass:K10} and Lemma~\ref{lem:Holder-2k} and differentiating the function $x \mapsto x^k$ we get
\begin{align}
|\tilde{\Psi}(z,w)| &\le 
C \left( \left| h'(|z|^k)-h'|w|^k) \right| |z|^{k-1}+ 
\Big| \frac{z}{|z|^{2-k}}-\frac{w}{|w|^{2-k}} \Big| h'(|w|^k) \right) |z-w|^{1-k}
\\
& \le C \big||z|^k-|w|^k\big|\big||z|^k\big|^{-k} |z|^{k-1} |z-w|^{1-k}+ \left| \frac{z}{|z|^{2-k}}-\frac{w}{|w|^{2-k}} \right| |z-w|^{1-k}
\\
& \le C |z|^{k-1} \big||z|-|w|\big|^{k-1} |z|^{k-1-k^2} |z-w|^{1-k}   + C |z-w|^{k-1} |z-w|^{1-k}
\\
& \le  C |z|^{-(k-1)^2-1}+C \le C.
\end{align}
    \item The case when $z$ and $-w$ are large but the distance between them is small enough so that $\varphi'
        \left( 2^{k-2}|z+w|^{2-k} \right)$ does not vanish is treated in the same way.
\end{proof}

\section*{Acknowledgments}%\retka{Anything else?}
The third author has been supported by the project ANR SMASH 25-CE40-4532.
%The authors would like to express their gratitude to Piotr Biler and Lorenzo Brandolese for fruitful discussions.

\section*{Declarations}

\subsection*{Data Availability Statement}
Data sharing is not applicable to this article as no datasets were generated or analyzed during the current study.

\subsection*{Conflict of Interest Statement}
The authors declare that they have no competing interests.

\bibliographystyle{siam}
\bibliography{KKL-biblio}

\end{document}